\newtheorem{thm}{Theorem}%[section]
\newtheorem{cor}{Corollary}%[section]
\newtheorem{lem}{Lemma}%[section]
\newtheorem{rem}{Remark}%[section]
\newtheorem{result}{Result}
\begin{document}

\title{Functional Central Limit Theorems for Conditional Poisson sampling Designs\thanks{This work was supported by the grant 2016-ATE-0459 and the grant 2017-ATE-0402 from Università degli Studi di Milano-Bicocca.}%Grants or other notes
%about the article that should go on the front page should be
%placed here. General acknowledgments should be placed at the end of the article.}
}
%\subtitle{Do you have a subtitle?\\ If so, write it here}

%\titlerunning{Short form of title}        % if too long for running head

\author{Leo Pasquazzi\footnote{Dipartimento di Statistica e Metodi Quantitativi, Università degli Studi di Milano-Bicocca, Edificio U7, Via Bicocca degli Arcimboldi 8,	20126 – Milano}}%         \and
%        Second Author %etc.

%\authorrunning{Short form of author list} % if too long for running head

%\institute{L. Pasquazzi \at
%              Dipartimento di Statistica e Metodi Quantitativi, Universit\`{a} degli Studi di Milano-Bicocca, Edificio U7, Via Bicocca degli Arcimboldi 8,	20126 – Milano\\
              %Tel.: +123-45-678910\\
              %Fax: +123-45-678910\\
%              \email{leo.pasquazzi@unimib.it}           %  \\
%             \emph{Present address:} of F. Author  %  if needed
%           \and
%           S. Author \at
%              second address

%\date{Received: date / Accepted: date}
% The correct dates will be entered by the editor

\maketitle

\begin{abstract}
This paper provides refined versions of some known functional central limit theorems for conditional Poisson sampling which are more suitable for applications. The theorems presented in this paper are generalizations of some results that have been recently published by \citet*{Bertail_2017}. The asymptotic equicontinuity part of the proofs presented in this paper is based on the same idea as in \citep{Bertail_2017} but some of the missing details are provided. On the way to the functional central limit theorems, this paper provides a detailed discussion of what must be done in order to prove conditional and unconditional weak convergence in bounded function spaces in the context of survey sampling. The results from this discussion can be useful to prove further weak convergence results.
\vspace{0.2cm}

\textbf{Keywords}: weak convergence, empirical process, conditional Poisson sampling, uniform entropy condition

\vspace{0.2cm}

\textbf{Mathematics Subject Classification (2010)}: 62A05, 60F05,60F17
\end{abstract}

\section{Introduction}

\citet*{Bertail_2017} have recently published a paper where they proposed some FLCTs for Poisson sampling designs as well as for conditional Poisson sampling designs (henceforth CPS designs or rejective sampling designs). The author of the present paper has already published a draft manuscript which provides quite substantial generalizations of the results for the Poisson sampling case (see \cite{Pasquazzi_2019}). In fact, \cite{Bertail_2017} considers only empirical processes indexed by function classes which satisfy the uniform entropy condition, while \cite{Pasquazzi_2019} extends these results to arbitrary Donsker classes with uniformly bounded means. The proofs of the more general results given in \cite{Pasquazzi_2019} are based on the symmetrization technique and they differ substantially from those given in \cite{Bertail_2017} which use the Hoeffding inequality (see \cite{Hoeffding_1963}). Unfortunately, the symmetrization trick cannot be applied in the conditional Poisson sampling case which prevents to generalize the weak convergence results for the conditional Poisson sampling case given in \cite{Bertail_2017} by using the symmetrization technique as in \cite{Pasquazzi_2019}. Anyway, the results given in \cite{Bertail_2017} are somewhat unsatisfactory because the assumptions about the sequence of conditional Poisson sampling designs are unnecessarily restrictive. In fact, perhaps in order to simplify the proofs, in \cite{Bertail_2017} it is assumed that the first order sample inclusion probabilities of the underlying (approximately canonical) Poisson sampling designs are realizations of i.i.d. random variables which are bounded away from zero. As a consequence, the assumptions of the theorems presented in \cite{Bertail_2017} imply that the sequence of sample sizes of the rejective sampling designs must be random and moreover the theorems cannot be applied to cases where there is dependence among the first order sample inclusion probabilities, or to cases where the sample inclusion probabilities are proportional to some size variable which can take on values arbitrarily close to zero. The results given in the present paper overcome these shortcomings. 

This work is organized as follows. Section \ref{Section_notation_definitions} introduces the probabilistic framework within which the FCLTs will be derived. The probabilistic model and all other definitions given in Section \ref{Section_notation_definitions} are identical to those given in Section 2 of \cite{Pasquazzi_2019}. Section \ref{general_weak_convergence_theory} provides some general definitions and theorems which are very useful for showing conditional weak convergence results in the context of survey sampling. The definitions and theorems provided in this section are conditional analogues of the definitions and theorems given in Chapter 1.5 on pages 34 - 41 in \citep{vdVW}. They are completely general and can be used to prove other weak convergence results in the context of survey sampling as well. Section \ref{rejective_sampling_review} reviews the relevant conditional Poisson sampling theory which is due to H{\'a}jek (see \cite{Hajek_1964}). In Section \ref{main_results} the FCLTs for the conditional Poisson sampling case will be derived. Section \ref{Hajek_empirical_process_theory_POISSON} provides extensions for the H{\'a}jek empirical process and Section \ref{Simulation_results} concludes this work with a simulation study.

%for sequences of Horvitz-Thompson empirical processes are derived. Section \ref{main_results_HAJEK} extends these results to the H{\'a}jek empirical process. Section \ref{Simulation_results} concludes this work with some simulation results.

\section{Notation and Definitions}\label{Section_notation_definitions}

Let $Y_{1}$, $Y_{2}$, \dots, $Y_{N}$ denote the values taken on by a study variable $Y$ on the $N$ units of a finite population and let $X_{1}$, $X_{2}$, \dots, $X_{N}$ denote corresponding values of an auxiliary variable $X$. In this paper it will be assumed that the $N$ ordered pairs $(Y_{i}, X_{i})$ corresponding to a given finite population of interest are the first $N$ realizations of an infinite sequence of i.i.d. random variables which take on values in the cartesian product of two measurable spaces which will be denoted by $(\mathcal{Y},\mathcal{A})$ and $(\mathcal{X},\mathcal{B})$, respectively. Moreover, as usual in finite population sampling theory, it will be assumed that the values taken on by the auxiliary variable $X$ are known in advance for all the $N$ population units, while the values taken on by the study variable $Y$ are only known for the population units that have been selected into a random sample. The corresponding vector of sample inclusion indicator functions will be denoted by $\mathbf{S}_{N}:=(S_{1,N}, S_{2,N}, \dots, S_{N,N})$ and it will be assumed that the vectors $\mathbf{S}_{N}$ and $\mathbf{Y}_{N}:=(Y_{1}, Y_{2}, \dots, Y_{N})$ are \textit{conditionally} independent \textit{given} $\mathbf{X}_{N}:=(X_{1}, X_{2}, \dots, X_{N})$. With reference to the sample design, probability and expectation will be denoted by $P_{d}$ e $E_{d}$, respectively. With this notation, the vector of first order sample inclusion probabilities will be given by
\begin{equation*}
\begin{split}
\underline{\mathbf{\pi}}_{N}&:=(\pi_{1,N}, \pi_{2,N}, \dots, \pi_{N,N})\\
&:=(E_{d}S_{1,N}, E_{d}S_{2,N}, \dots, E_{d}S_{N,N})\\
&=(P_{d}\{S_{1,N}=1\}, P_{d}\{S_{2,N}=1\},\dots, P_{d}\{S_{N,N}=1\}),
\end{split}
\end{equation*}
and from the conditional independence assumption it follows that $\underline{\mathbf{\pi}}_{N}$ must be a deterministic function of $\mathbf{X}_{N}$.

\smallskip

Now, with reference to the measurable space $(\mathcal{Y},\mathcal{A})$, consider the random empirical measure given by
\begin{equation}\label{HT_empirical_process}
\mathbb{G}_{N}':=\frac{1}{\sqrt{N}}\sum_{i=1}^{N}\left(\frac{S_{i,N}}{\pi_{i,N}}-1\right)\delta_{Y_{i}}.
\end{equation}
For a given $f:\mathcal{Y}\mapsto\mathbb{R}$, the integral of $f$ with respect to $\mathbb{G}_{N}'$ can be written as
\[\mathbb{G}_{N}'f:=\frac{1}{\sqrt{N}}\sum_{i=1}^{N}\left(\frac{S_{i,N}}{\pi_{i,N}}-1\right)f(Y_{i})\]
so that, for any given class $\mathcal{F}$ of functions $f:\mathcal{Y}\mapsto\mathbb{R}$, the random empirical measure $\mathbb{G}_{N}'$, as a real-valued function of $f\in\mathcal{F}$, can be interpreted as a stochastic process indexed by the set $\mathcal{F}$. For obvious reasons $\mathbb{G}_{N}'$ will be called \textbf{Horvitz-Thompson empirical process} (henceforth \hypertarget{HTEP}{\textbf{HTEP}}). Depending on the values taken on by the study variable $Y$ and on the class of functions $\mathcal{F}$, a sample path of $\mathbb{G}_{N}'$ could be either bounded or not. In the former case it will be an element of $l^{\infty}(\mathcal{F})$, the space of all bounded and real-valued functions with domain given by the class of functions $\mathcal{F}$. In what follows $l^{\infty}(\mathcal{F})$ will be considered as a metric space with distance function induced by the norm $\lVert z\rVert_{\mathcal{F}}:=\sup_{f\in\mathcal{F}}|z(f)|$.

\smallskip

As already mentioned in the introduction, the present paper provides FCLTs for conditional Poisson sampling designs (or rejective sampling designs). To be precise, the present paper investigates conditions under which
\[\mathbb{G}_{N}'\rightsquigarrow \mathbb{G}'\text{ in }l^{\infty}(\mathcal{F}),\]
where $\mathbb{G}'$ is a Borel measurable and tight (in $l^{\infty}(\mathcal{F})$) Gaussian process. Both \textit{unconditional} and \textit{conditional} (on the realized values of $X$ and $Y$) weak convergence will be considered. Recall that unconditional weak convergence is defined as
\[E^{*}h(\mathbb{G}_{N}')\rightarrow Eh(\mathbb{G}')\quad\text{ for all }h\in C_{b}(l^{\infty}(\mathcal{F})),\]
where $C_{b}(l^{\infty}(\mathcal{F}))$ is the class of all real-valued and bounded functions on $l^{\infty}(\mathcal{F})$. If the realizations of $\mathbb{G}'$ lie in a separable subset of $l^{\infty}(\mathcal{F})$ almost surely, this is equivalent to
\[\sup_{h\in BL_{1}(l^{\infty}(\mathcal{F}))}\left|E^{*}h(\mathbb{G}_{N}')-Eh(\mathbb{G}')\right|\rightarrow 0,\]
where $BL_{1}(l^{\infty}(\mathcal{F}))$ is the set of all functions $h:l^{\infty}(\mathcal{F})\mapsto[0,1]$ such that $|h(z_{1}-h(z_{2})|\leq \lVert z_{1}-z_{2}\rVert_{\mathcal{F}}$ for every $z_{1},z_{2}\in l^{\infty}(\mathcal{F})$ (see Chapter 1.12 in \cite{vdVW}). Based on this observation, \cite{vdVW} provides two definitions of conditional weak convergence: \textbf{conditional weak convergence in outer probability} (henceforth \hypertarget{opCWC}{\textbf{opCWC}}), which in the context of this paper translates to the condition
\[\sup_{h\in BL_{1}(l^{\infty}(\mathcal{F}))}\left|E_{d}h(\mathbb{G}_{N}')-Eh(\mathbb{G}')\right|\overset{P^{*}}{\longrightarrow} 0\]
(see page 181 in \cite{vdVW}), and \textbf{outer almost sure conditional weak convergence} (henceforth \hypertarget{oasCWC}{\textbf{oasCWC}}), which in the context of this paper translates to the condition
\[\sup_{h\in BL_{1}(l^{\infty}(\mathcal{F}))}\left|E_{d}h(\mathbb{G}_{N}')-Eh(\mathbb{G}')\right|\overset{as*}{\rightarrow} 0.\]
As expected, \hyperlink{oasCWC}{oasCWC} implies \hyperlink{opCWC}{opCWC} (see Lemma 1.9.2 on page 53 in \cite{vdVW}). However,  it seems that \hyperlink{oasCWC}{oasCWC} is not strong enough to imply asymptotic measurability (cfr. Theorem 2.9.6 on page 182 in \cite{vdVW} and the comments thereafter) which is a necessary condition for unconditional weak convergence (see Lemma 1.3.8 on page 21 in \cite{vdVW}).

\smallskip

Since the very definition of weak convergence relies on the concept of outer expectation, some assumptions about the underlying probability space will be necessary for what follows. Throughout this paper it will be assumed that the latter is a product space of the form
\begin{equation}\label{probability_space}
\prod_{i=1}^{\infty} (\Omega_{y,x}, \mathcal{A}_{y,x}, P_{y,x})\times (\Omega_{d},\mathcal{A}_{d},P_{d})
\end{equation}
and that the elements of the random sequence $\{(Y_{i},X_{i})\}_{i=1}^{\infty}$ are the coordinate projections on the first infinite coordinates of the sample points $\omega\in\Omega_{y,x}^{\infty}\times\Omega_{d}$. On the other hand, the sample inclusion indicators $S_{i,N}$ are allowed to depend on all the coordinates. As suggested by the notation, it will be assumed that for each value of $N$ the corresponding sample inclusion indicator functions $S_{1,N}$, $S_{2,N}$, \dots, $S_{N,N}$ are the elements of one row of a triangular array of random variables. This assumption is needed in order to make sure that for each value of $N$ the sample design can be readapted according to all the $N$ (known) values taken on by the auxiliary variable $X$ as the population size increases. To make sure that the conditional independence assumption holds, it will be assumed that for each value of $N$ the corresponding vector $\mathbf{S}_{N}$ is defined as a function of the random vector $\mathbf{X}_{N}$ and of random variables $D_{1}$, $D_{2}$, \dots which are functions of the last coordinate of the sample points $\omega\in\Omega_{y,x}^{\infty}\times\Omega_{d}$ only, i.e. of the coordinate that takes on values in the set $\Omega_{d}$ (instead of a random sequence $\{D_{i}\}_{i=1}^{\infty}$ one could also consider a stochastic process $\{D_{t}: t\in T\}$ with an arbitrary index set $T$ but this will not be of interest in the present paper). For example, in the case of a Poisson sampling design with a given vector of first order sample inclusion probabilities $\underline{\mathbf{\pi}}_{N}$ (which could be a function of $\mathbf{X}_{N}$) one could define $\{D_{i}\}_{i=1}^{\infty}$ as a sequence of i.i.d. uniform-$[0,1]$ random variables and define for each value of $N$ the corresponding row of sample inclusion indicators by
\[S_{i,N}:=\begin{cases}
1&\text{if }D_{i}\leq \pi_{i,N}\\
0 & \text{ otherwise}
\end{cases}
\quad\quad i=1,2,\dots, N\]
Of course, the above probability space does not only work for Poisson sampling designs, but it can accommodate any non-informative sampling design. In fact, it is not difficult to show that \textit{for any non-informative sampling design the vector of sample inclusion indicators $\mathbf{S}_{N}$ can be defined as a function of $\mathbf{X}_{N}$ and of a single uniform-$[0,1]$ random variable $D$ that depends on the last coordinate of the sample points $\omega\in\Omega_{y,x}^{\infty}\times\Omega_{d}$ only}. To this aim let
\begin{equation}\label{sample_selection_probability}
\mathfrak{p}_{N}(\mathbf{s}_{N}):=\mathfrak{p}_{N}(\mathbf{s}_{N}; \mathbf{X}_{N})
\end{equation}
denote the probability to select a given sample $\mathbf{s}_{N}\in\{0,1\}^{N}$. Note that the definition of the function $\mathfrak{p}_{N}$ specifies a desired sampling design. Since the values taken on by the auxiliary variable $X$ are assumed to be already known before the sample is drawn, the sample selection probabilities $\mathfrak{p}_{N}(\mathbf{s}_{N})$ are allowed to depend on $\mathbf{X}_{N}$. Now, let $\mathbf{s}_{N}^{(1)}$, $\mathbf{s}_{N}^{(2)}$, \dots, $\mathbf{s}_{N}^{(2^{N})}$ denote the $2^{N}$ elements of $\{0,1\}^{N}$ arranged in some fixed order (for example, according to the order determined by the binary expansion corresponding to the finite sequence of zeros and ones in $\mathbf{s}_{N}$), and put $\mathfrak{p}_{N}^{(i)}:=\mathfrak{p}_{N}(\mathbf{s}_{N}^{(i)})$, $i=1,2,\dots, 2^{N}$. Then, define the vector of sample inclusion indicators $\mathbf{S}_{N}$ by
\[\mathbf{S}_{N}:=\begin{cases}
\mathbf{s}_{N}^{(1)}\quad\text{ if }D\leq \mathfrak{p}_{N}^{(1)}\\
\mathbf{s}_{N}^{(i)}\quad\text{ if }\sum_{j=1}^{i-1}\mathfrak{p}_{N}^{(j)}<D\leq\sum_{j=1}^{i}\mathfrak{p}_{N}^{(j)}\text{ for } i=2,3,\dots, 2^{N},
\end{cases}\]
and note that for every $\mathbf{s}_{N}\in\{0,1\}^{N}$ this vector satisfies $P_{d}\{\mathbf{S}_{N}=\mathbf{s}_{N}\}=\mathfrak{p}_{N}(\mathbf{s}_{N})$ as desired. This concludes the proof of the above assertion written in italics. 

Next, observe that in the above construction the sample selection probabilities $P_{d}\{\mathbf{S}_{N}=\mathbf{s}_{N}\}$ are functions of $\mathbf{X}_{N}$. If for a given $\mathbf{s}_{N}\in\{0,1\}^{N}$ the corresponding sample selection probability $P_{d}\{\mathbf{S}_{N}=\mathbf{s}_{N}\}$ is a measurable function of $\mathbf{X}_{N}\in\mathcal{X}^{N}$ (this depends on the sampling design), then, with reference to the probability space of this paper, $P_{d}\{\mathbf{S}_{N}=\mathbf{s}_{N}\}$ can be interpreted as a conditional probability in the proper sense. Otherwise, $P_{d}\{\mathbf{S}_{N}=\mathbf{s}_{N}\}$ will just be a non measurable (random) function of $\mathbf{X}_{N}$. More generally, the expectation with respect to the uniform random variable $D$ with $\mathbf{Y}_{N}$ and $\mathbf{X}_{N}$ kept fixed, which can be interpreted as design expectation and will therefore be denoted by $E_{d}$, can be applied to \textit{any} function $g$ of $\mathbf{S}_{N}$, $\mathbf{Y}_{N}$ and $\mathbf{X}_{N}$. In fact, the expectation
\[E_{d}g(\mathbf{S}_{N}, \mathbf{Y}_{N},\mathbf{X}_{N})\]
is given by 
\[\sum_{\mathbf{s}_{N}\in\{0,1\}^{N}}g(\mathbf{s}_{N}, \mathbf{Y}_{N},\mathbf{X}_{N})P_{d}\{\mathbf{S}_{N}=\mathbf{s}_{N}\},\]
and $E_{d}g(\mathbf{S}_{N}, \mathbf{Y}_{N},\mathbf{X}_{N})$ is thus a function of $\mathbf{Y}_{N}$ and $\mathbf{X}_{N}$. If for every fixed $\mathbf{s}_{N}\in\{0,1\}^{N}$ the corresponding function $g(\mathbf{s}_{N},\cdot)$ is a measurable function of $\mathbf{Y}_{N}$ and $\mathbf{X}_{N}$ and the function $P_{d}\{\mathbf{S}_{N}=\mathbf{s}_{N}\}$ is a measurable function of $\mathbf{X}_{N}$, then, with respect  to the probability space of this paper, $E_{d}g(\mathbf{S}_{N}, \mathbf{Y}_{N},\mathbf{X}_{N})$ can be interpreted as a conditional expectation in the proper sense (and in this case it will obviously be a measurable function of $\mathbf{Y}_{N}$ and $\mathbf{X}_{N}$), while otherwise it could either be a measurable or a non measurable function of $\mathbf{Y}_{N}$ and $\mathbf{X}_{N}$.

Throughout this paper it will be assumed that all the vectors of sample inclusion indicators $\mathbf{S}_{N}$ are defined as described in the above construction (the one which involves a single uniform-$[0,1]$ random variable $D$). Of course, in this way the random vectors $\mathbf{S}_{N}$ will be dependent for different values of $N$, but for the purposes of this paper this dependence structure is irrelevant. Moreover, in what follows only \textit{measurable sample designs} will be considered, i.e. sample designs such that for every fixed $\mathbf{s}_{N}\in\{0,1\}^{N}$ the corresponding sample selection probability in (\ref{sample_selection_probability}) is a measurable function of $\mathbf{X}_{N}$. Note that this is a very mild restriction that should be satisfied in virtually every practical setting. However, it entails three important consequences which will be relevant for the proofs presented in this paper. They are: (i) the vectors of sample inclusion indicators $\mathbf{S}_{N}$ are measurable functions of $\mathbf{X}_{N}$ and of the uniform-$[0,1]$ random variable $D$, (ii) for every $\mathbf{s}_{N}\in\{0,1\}^{N}$ the corresponding probability $P_{d}\{\mathbf{S}_{N}=\mathbf{s}_{N}\}$ is a conditional probability in the proper sense, and (iii) for $g$ a measurable function of $\mathbf{S}_{N}$, $\mathbf{Y}_{N}$ and $\mathbf{X}_{N}$ the corresponding expectation $E_{d}g(\mathbf{S}_{N}, \mathbf{Y}_{N}, \mathbf{X}_{N})$ is a conditional expectation in the proper sense.

\section{Weak convergence in bounded function spaces in the context of survey sampling}\label{general_weak_convergence_theory}

This section contains a detailed discussion of general methods for proving unconditional and conditional weak convergence in bounded function spaces in the context of survey sampling. Throughout this section it will be assumed that $\mathcal{F}$ is an arbitrary set and that $\{\mathbb{H}_{N}'\}_{N=1}^{\infty}$ is a sequence of mappings from the probability space (\ref{probability_space}) into $l^{\infty}(\mathcal{F})$ where each $\mathbb{H}_{N}'$ depends on the sample points $\omega\in\Omega_{y,x}^{\infty}\times\Omega_{d}$ only through $\mathbf{Y}_{N}$, $\mathbf{X}_{N}$ and $\mathbf{S}_{N}$. Moreover, it will be assumed that for every $N=1,2,\dots$ and for every $f\in\mathcal{F}$ the corresponding coordinate projection $\mathbb{H}_{N}'f$ is measurable. The scope of this section is to provide necessary and sufficient  conditions for \hyperlink{opCWC}{opCWC} (\hyperlink{oasCWC}{oasCWC}) in $l^{\infty}(\mathcal{F})$, i.e. for
\begin{equation}\label{CWC_H}
\sup_{h\in BL_{1}(l^{\infty}(\mathcal{F}))}\left|E_{d}h(\mathbb{H}_{N}')-Eh(\mathbb{H}')\right|\overset{P*(as*)}{\rightarrow} 0
\end{equation}
with $\mathbb{H}'$ a Borel measurable and tight mapping from some probability space into $l^{\infty}(\mathcal{F})$. In order to avoid repetitions, in what follows the symbols $\overset{P(as)}{\rightarrow}$ and $\overset{P*(as*)}{\rightarrow}$ will be used in order to express two versions of a convergence condition. This notation will often appear in the assumptions and in the conclusions of lemmas, theorems and corollaries. In these cases it is understood that the "probability convergence versions" of the assumptions imply the "probability convergence versions" of the conclusions and that the "almost sure convergence versions" of the assumptions imply the "almost sure convergence versions" of the conclusions.

What needs to be done in order to prove \hyperlink{opCWC}{opCWC} (\hyperlink{oasCWC}{oasCWC}) seems to be clear from the general (unconditional) weak convergence theory laid out in \cite{vdVW}. In fact, according to Theorem 1.5.4 on page 35 in \cite{vdVW}, if the sequence $\{\mathbb{H}_{N}'\}_{N=1}^{\infty}$ is asymptotically tight and all the finite dimensional marginals $(\mathbb{H}_{N}'f_{1}, \mathbb{H}_{N}'f_{2}, \dots, \mathbb{H}_{N}'f_{r})$ converge weakly (in $\mathbb{R}^{r}$) to the corresponding marginals of some stochastic process $\{\mathbb{H}'f:f\in\mathcal{F}\}$, then there exists a version of $\mathbb{H}'$ which is a Borel measurable and tight mapping from some probability space into $l^{\infty}(\mathcal{F})$ such that
\begin{equation}\label{WC_H_easy_notation}
\mathbb{H}_{N}'\rightsquigarrow\mathbb{H}'\text{ in }l^{\infty}(\mathcal{F}).
\end{equation}
Since the realizations of $\mathbb{H}'$ lie in a separable subset of $l^{\infty}(\mathcal{F})$ almost surely (this follows from tightness), it follows that condition (\ref{WC_H_easy_notation}) is equivalent to 
\begin{equation}\label{WC_H}
\sup_{h\in BL_{1}(l^{\infty}(\mathcal{F}))}\left|Eh(\mathbb{H}_{N}')-Eh(\mathbb{H}')\right|\rightarrow 0
\end{equation}
(see the comments at the top of page 73 in \cite{vdVW}). On the other hand, Theorem 1.5.4 on page 35 in \cite{vdVW} says also that if $\mathbb{H}'$ is a Borel measurable and tight mapping from some probability space into $l^{\infty}(\mathcal{F})$ and if condition (\ref{WC_H_easy_notation}) or equivalently condition (\ref{WC_H}) holds, then it must be necessarily true that the sequence $\{\mathbb{H}_{N}'\}_{N=1}^{\infty}$ is asymptotically tight and that its finite-dimensional marginals converge weakly to the corresponding marginals of $\mathbb{H}'$.

Since the only difference between condition (\ref{WC_H}) and condition (\ref{CWC_H}) is the fact that the unconditional expectation $Eh(\mathbb{H}_{N}')$ is replaced by the sample design expectation $E_{d}h(\mathbb{H}_{N}')$ (which is not necessarily a conditional expectation in the proper sense), one would expect that \hyperlink{opCWC}{opCWC} (\hyperlink{oasCWC}{oasCWC}) is equivalent to the joint occurrence of some form of \textit{conditional asymptotic tightness} and of some form of \textit{conditional weak convergence of the finite-dimensional marginals}. In this section it will be shown that this is indeed true. The first step towards this goal is to provide a clear definition of what "conditional asymptotic tightness" and "conditional weak convergence of the finite-dimensional marginals" mean. To this aim recall that according to Definition 1.3.7 on pages 20-21 in \cite{vdVW} the sequence $\{\mathbb{H}_{N}'\}_{N=1}^{\infty}$ is asymptotically tight in the usual unconditional sense if for every $\eta>0$ there exists a compact set $K\subset l^{\infty}(\mathcal{F})$ such that
\[\liminf_{N\rightarrow\infty}P_{*}\{\mathbb{H}_{N}'\in K^{\delta}\}\geq 1-\eta\quad\text{ for every }\delta>0,\]
where $K^{\delta}:=\{z\in l^{\infty}(\mathcal{F}):\lVert z-z'\rVert_{\mathcal{F}}<\delta\text{ for some }z'\in K\}$. Of course this condition is satisfied if and only if for every $\eta>0$ there exists a compact set $K\subset l^{\infty}(\mathcal{F})$ such that for every $\delta>0$ there exists a sequence of real numbers $A_{N}\rightarrow 0$ for which
\[P_{*}\{\mathbb{H}_{N}'\in K^{\delta}\}+A_{N}\geq 1-\eta\quad\text{ for every }N=1,2,\dots.\]
Based on this observation we can define two versions of \textbf{conditional asymptotic tightness} (henceforth \hypertarget{CAT}{\textbf{CAT}}): a \textit{probability version} by requiring that for every $\eta>0$ there exists a compact set $K\subset l^{\infty}(\mathcal{F})$ such that for every $\delta>0$ there exists a sequence of random variables $\widetilde{A}_{N}\overset{P}{\rightarrow}0$ for which
\begin{equation}\label{conditional_asy_tightness}
P_{d}\{\mathbb{H}_{N}'\in K^{\delta}\}+\widetilde{A}_{N}\geq 1-\eta\quad\text{ for every }N=1,2,\dots,
\end{equation}
and an \textit{almost sure version} of \hyperlink{CAT}{CAT} by requiring $\widetilde{A}_{N}\overset{as}{\rightarrow}0$ instead of $\widetilde{A}_{N}\overset{P}{\rightarrow}0$.

The following theorem provides two characterizations of \hyperlink{CAT}{CAT} which are analogous to the characterizations of (unconditional) asymptotic tightness given in Theorem 1.5.6 and Theorem 1.5.7 on pages 36-37 in \cite{vdVW}.

\begin{thm}\label{Theorem_CAT_characterizations}
The following three conditions are equivalent: 
\begin{itemize}
\item[(i) ] the sequence $\{\mathbb{H}_{N}'\}_{N=1}^{\infty}$ is \hyperlink{CAT}{CAT};
\item[(ii) ] the marginals of the sequence $\{\mathbb{H}_{N}'\}_{N=1}^{\infty}$ are \hyperlink{CAT}{CAT} in the sense that for every $f\in\mathcal{F}$ and $\eta>0$ there exists a constant $M>0$ such that
\begin{equation}\label{marginal_CAT}
\begin{split}
&P_{d}\left\{|\mathbb{H}_{N}'f|\leq M\right\}+\widetilde{B}_{N}\geq1-\eta\quad\text{ for every }N=1,2,\dots
\end{split}
\end{equation}
for some sequence of random variables $\widetilde{B}_{N}$ which goes to zero in probability (almost surely), and there exists a semimetric $\rho$ for which $\mathcal{F}$ is totally bounded and for which the sequence $\{\mathbb{H}_{N}'\}_{N=1}^{\infty}$ is \textbf{conditionally asymptotically $\rho$-equicontinuous} (henceforth \hypertarget{conditional_AEC}{\textbf{conditionally AEC}} w.r.t. $\rho$) in the sense that for every $\epsilon,\eta>0$ there exists a $\delta>0$ such that
\begin{equation}\label{AEC_general_definition}
\begin{split}
&P_{d}\left\{\sup_{f,g\in\mathcal{F}:\rho(f,g)<\delta}|\mathbb{H}_{N}'f-\mathbb{H}_{N}'g|>\epsilon\right\}<\eta+\widetilde{C}_{N}\\
&\text{ for every }N=1,2,\dots
\end{split}
\end{equation}
for some sequence of random variables $\widetilde{C}_{N}$ which goes to zero in probability (almost surely);
\item[(iii) ] the marginals of the sequence $\{\mathbb{H}_{N}'\}_{N=1}^{\infty}$ are \hyperlink{CAT}{CAT} and the following conditional probability version of finite approximation holds: for every $\epsilon,\eta>0$ there exists a finite partition $\mathcal{F}=\cup_{i=1}^{k}\mathcal{F}_{i}$ such that 
\begin{equation}\label{FA_general_definition}
\begin{split}
&P_{d}\left\{\sup_{1\leq i\leq k} \sup_{f,g\in\mathcal{F}_{i}}|\mathbb{H}_{N}'f-\mathbb{H}_{N}'g|>\epsilon\right\}<\eta+\widetilde{D}_{N}\\
&\text{ for every }N=1,2,\dots
\end{split}
\end{equation}
for some sequence of random variables $\widetilde{D}_{N}$ which goes to zero in probability (almost surely).
\end{itemize}
\end{thm}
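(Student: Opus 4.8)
The plan is to mirror the structure of the unconditional proof of Theorem 1.5.6 and Theorem 1.5.7 in van der Vaart and Wellner, replacing ordinary outer probabilities with design probabilities $P_d\{\cdot\}$ and replacing deterministic vanishing sequences with the random vanishing sequences $\widetilde{A}_N,\widetilde{B}_N,\widetilde{C}_N,\widetilde{D}_N$. The logical skeleton I would follow is a cycle of implications: first show (i)$\Rightarrow$(ii), then (ii)$\Rightarrow$(iii) (which is essentially the easy direction, since a finite partition into sets of small $\rho$-diameter is furnished by total boundedness), and finally (iii)$\Rightarrow$(i), thereby closing the loop and establishing equivalence of all three. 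Throughout I would keep careful track of the fact that all the auxiliary random sequences go to zero \emph{in probability} in one version of the argument and \emph{almost surely} in the other, so that each implication must be verified simultaneously in both regimes; since the arithmetic manipulations (adding finitely many vanishing sequences, taking maxima over finitely many indices) preserve both modes of convergence, this parallel bookkeeping costs essentially nothing once the structure is set up.

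For the direction (i)$\Rightarrow$(ii), I would exploit the fact that \hyperlink{CAT}{CAT} supplies, for each $\eta>0$, a compact $K\subset l^\infty(\mathcal{F})$ with (\ref{conditional_asy_tightness}). Marginal \hyperlink{CAT}{CAT} then follows because the coordinate projections $z\mapsto z(f)$ are continuous and hence map $K$ into a bounded subset of $\mathbb{R}$; the $K^{\delta}$-inflation translates into the bound (\ref{marginal_CAT}) after absorbing the slack into $\widetilde{B}_N$. For the equicontinuity clause I would use the compactness of $K$ in $l^\infty(\mathcal{F})$ together with the Arzel\`a--Ascoli characterization (Theorem 1.5.6 in \cite{vdVW}): a compact subset of $l^\infty(\mathcal{F})$ consists of functions that are uniformly equicontinuous with respect to a semimetric $\rho$ for which $\mathcal{F}$ is totally bounded. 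Fixing that $\rho$, every $z\in K$ satisfies $\sup_{\rho(f,g)<\delta}|z(f)-z(g)|\le\epsilon/2$ for $\delta$ small, and the same bound with $\epsilon$ holds on the inflation $K^{\delta'}$ for $\delta'$ small; so whenever $\mathbb{H}_N'\in K^{\delta'}$ the event in (\ref{AEC_general_definition}) fails, and (\ref{conditional_asy_tightness}) converts this into the desired probability bound with $\widetilde{C}_N$ built from $\widetilde{A}_N$.

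The implication (ii)$\Rightarrow$(iii) is the routine step: total boundedness of $(\mathcal{F},\rho)$ yields a finite cover by $\rho$-balls of radius $\delta$, hence a finite partition $\mathcal{F}=\cup_{i=1}^k\mathcal{F}_i$ with each $\mathcal{F}_i$ of $\rho$-diameter below $2\delta$, and the supremum over each cell is dominated by the modulus of continuity appearing in (\ref{AEC_general_definition}); marginal \hyperlink{CAT}{CAT} is simply carried over verbatim, and $\widetilde{D}_N$ is taken to be $\widetilde{C}_N$. The genuinely delicate direction, and the one I expect to be \textbf{the main obstacle}, is (iii)$\Rightarrow$(i): here one must reconstruct a single compact set $K$ from the partition data. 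The unconditional argument builds $K$ as a countable intersection over a sequence $\epsilon_m\downarrow0$ of partitions, taking $K$ to be the set of $z$ that are uniformly bounded on the marginals and have oscillation at most $\epsilon_m$ on the cells of the $m$-th partition, and then invokes Arzel\`a--Ascoli to conclude compactness. The subtlety in the conditional version is that the inequalities (\ref{marginal_CAT}) and (\ref{FA_general_definition}) hold only up to the random slacks $\widetilde{B}_N$ and $\widetilde{D}_N$; to recover a \emph{single} compact $K$ valid for all $N$ I would combine countably many partitions by a diagonal choice of radii and then aggregate the corresponding countably many random sequences into one controlling sequence $\widetilde{A}_N$. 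The crucial point is that a countable sum of sequences that vanish in probability (respectively almost surely), weighted so as to be summable, still vanishes in the same mode; this is what lets the random errors across all the partitions and marginals be bundled into the single $\widetilde{A}_N$ demanded by (\ref{conditional_asy_tightness}), and getting this aggregation to respect both convergence modes simultaneously is where the argument requires the most care.
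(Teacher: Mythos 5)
Your overall architecture is the same as the paper's (a conditional transcription of Theorems 1.5.6--1.5.7 of van der Vaart and Wellner, with the same cycle of implications), but there is a genuine gap in your (i)$\Rightarrow$(ii) step. The compact set $K$ furnished by \hyperlink{CAT}{CAT}, and hence the semimetric you extract from it via the Arzel\`a--Ascoli characterization, depends on $\eta$. The definition of conditional AEC, however, demands a \emph{single} semimetric $\rho$, making $\mathcal{F}$ totally bounded, that works simultaneously for every pair $(\epsilon,\eta)$; ``fixing that $\rho$'' for one $K$ only yields the oscillation bound at the particular level $\eta$ attached to that $K$. The paper closes this by taking compacts $K_{m}$ corresponding to $\eta=1/m$, extracting a semimetric $\rho_{m}$ for each one, and gluing them into the single semimetric $\rho(f,g):=\sum_{m}2^{-m}[\rho_{m}(f,g)\wedge 1]$, for which $\mathcal{F}$ is still totally bounded and every $K_{m}$ still consists of uniformly $\rho$-continuous functions. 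Your sketch needs this (standard but non-optional) aggregation of semimetrics.

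Second, in (iii)$\Rightarrow$(i) your proposed bundling of countably many random slacks into one controlling sequence is both riskier and unnecessary. As literally stated, the claim that a summably weighted countable sum of sequences vanishing in probability still vanishes is false without a uniform bound: take $\widetilde{D}_{m,N}=4^{m}I(N\le m)$ with weights $2^{-m}$, whose weighted sum is identically infinite even though each term vanishes. It can be repaired here because slacks in probability inequalities may be truncated at $1$, but the paper avoids the issue entirely: the definition of \hyperlink{CAT}{CAT} allows the error sequence $\widetilde{A}_{N}$ to depend on $\delta$, and since $\cap_{i=1}^{m}K_{i}\subset K^{\delta}$ for some finite $m=m(\delta)$, a \emph{finite} sum $\sum_{i=1}^{m}\widetilde{D}_{i,N}+\widetilde{E}_{N}$ suffices for each $\delta$, so no infinite aggregation is ever needed. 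You should also make explicit the preliminary step that (iii) implies conditional tightness of $\lVert\mathbb{H}_{N}'\rVert_{\mathcal{F}}$ (combining marginal \hyperlink{CAT}{CAT} at one representative $f_{i}$ per cell with the finite-approximation bound, each at level $\eta/(k+1)$), which is where the sequence $\widetilde{E}_{N}=k\widetilde{B}_{N}+\widetilde{D}_{N}$ comes from; your phrase ``uniformly bounded on the marginals'' glosses over this.
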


\begin{proof}
(i)$\Rightarrow$(ii). If condition (\ref{conditional_asy_tightness}) holds, then it follows that
\[P_{d}\{|\mathbb{H}_{N}'f|\leq M\}+\widetilde{A}_{N}\geq 1-\eta\quad\text{ for every }N=1,2,\dots\]
with $M:=\sup\{|z(f)|:z\in K^{\delta}\}<\infty$ and with the same sequence $\{\widetilde{A}_{N}\}_{N=1}^{\infty}$ as in (\ref{conditional_asy_tightness}). This shows that the marginals of the sequence $\{\mathbb{H}_{N}'\}_{N=1}^{\infty}$ are \hyperlink{CAT}{CAT}. Next, consider a sequence $\{K_{m}\}_{m=1}^{\infty}$ of compact subsets of $l^{\infty}(\mathcal{F})$ such that, for every fixed $m=1,2,\dots$, condition (\ref{conditional_asy_tightness}) holds with $\eta=1/m$ and $K=K_{m}$. Note that the sequence $\widetilde{A}_{N}$ in (\ref{conditional_asy_tightness}) depends on $\eta$, $K$ and $\delta>0$ and hence we shall write $\widetilde{A}_{N}(\eta, K, \delta)$ instead of $\widetilde{A}_{N}$. By part (b) of Theorem 6.2 on page 88 in \cite{Kosorok}, to every $K_{m}$ there corresponds a semimetric $\rho_{m}$ which makes $\mathcal{F}$ totally bounded and for which $K_{m}$ is a subset of $UC(\mathcal{F},\rho_{m})$, i.e. of the class of all real-valued functions on $\mathcal{F}$ which are uniformly $\rho_{m}$-continuous. Based on the sequence of semimetrics $\{\rho_{m}\}_{m=1}^{\infty}$, define a new semimetric by
\[\rho(f,g):=\sum_{m=1}^{\infty}2^{-m}[\rho_{m}(f,g)\wedge 1],\quad f,g\in\mathcal{F}.\]
Then it follows that $K_{m}\subset UC(\mathcal{F},\rho)$ for every $m=1,2,\dots$ and moreover it is not difficult to show that $\mathcal{F}$ is totally bounded w.r.t. $\rho$ too. Now, choose $\epsilon>0$ arbitrarily and note that $\mathbb{H}_{N}'\in K_{m}$ implies 
\[\sup_{f,g\in\mathcal{F}:\rho(f,g)<\delta}|\mathbb{H}_{N}'f-\mathbb{H}_{N}'g|\leq\epsilon/3\]
for some small enough $\delta>0$, and hence that $\mathbb{H}_{N}'\in K_{m}^{\epsilon/3}$ implies 
\[\sup_{f,g\in\mathcal{F}:\rho(f,g)<\delta}|\mathbb{H}_{N}'f-\mathbb{H}_{N}'g|\leq\epsilon\]
for the same set of values of $\delta$. For small enough values of $\delta>0$ it follows therefore that condition (\ref{AEC_general_definition}) is satisfied with $\eta=1/m$ and $\widetilde{C}_{N}:=\widetilde{A}_{N}(1/m, K_{m}, \epsilon/3)$.

(ii)$\Rightarrow$(iii). Fix $\delta>0$ and choose a finite collection of open balls of $\rho$-radius $\delta$ which covers $\mathcal{F}$, disjointify and create a finite partition $\mathcal{F}=\cup_{i=1}^{k}\mathcal{F}_{i}$ such that each $\mathcal{F}_{i}$ is a subset of an open ball of $\rho$-radius $\delta$. Then (\ref{AEC_general_definition}) implies (\ref{FA_general_definition}) with the sequence $\{\widetilde{D}_{N}\}_{N=1}^{\infty}$ equal to the sequence $\{\widetilde{C}_{N}\}_{N=1}^{\infty}$ from the definition of \hyperlink{conditional_AEC}{conditional AEC} corresponding to $\epsilon$, $\eta$ and $\delta$.

(iii)$\Rightarrow$(i). This part of the proof is essentially the same as the proof of Theorem 1.5.6 on page 36 in \cite{vdVW}. First, it will be shown that (iii) implies that $\lVert \mathbb{H}_{N}'\rVert_{\mathcal{F}}$ is \hyperlink{CAT}{CAT}, i.e. that for every $\eta>0$ there exists a constant $M$ such that
\begin{equation}\label{maximum_CAT}
\begin{split}
&P_{d}\left\{\lVert\mathbb{H}_{N}'\rVert_{\mathcal{F}}\leq M\right\}+\widetilde{E}_{N}\geq1-\eta\quad\text{ for every }N=1,2,\dots
\end{split}
\end{equation}
for some sequence of random variables $\widetilde{E}_{N}$ which goes to zero in probability (almost surely). To this aim, choose $\epsilon,\eta>0$ arbitrarily and let $\mathcal{F}=\cup_{i=1}^{k}\mathcal{F}_{i}$ be a corresponding partition for which (\ref{FA_general_definition}) holds. Then choose one index $f_{i}$ from each partition set $\mathcal{F}_{i}$ and note that
\[|\mathbb{H}_{N}'f_{i}|\leq M-\epsilon,\quad i=1,2,\dots, k,\quad\text{ and }\quad\sup_{1\leq i\leq k} \sup_{f,g\in\mathcal{F}_{i}}|\mathbb{H}_{N}'f-\mathbb{H}_{N}'g|\leq\epsilon\]
implies $\lVert\mathbb{H}_{N}'\rVert_{\mathcal{F}}\leq M$. Now, from the  assumptions in (iii) it follows that there exists sequences of random variables $\{\widetilde{B}_{N}\}_{N=1}^{\infty}$ and $\{\widetilde{D}_{N}\}_{N=1}^{\infty}$ which go to zero in probability (almost surely) such that, for every $N=1,2,\dots$,
\[P_{d}\left\{|\mathbb{H}_{N}'f_{i}|\leq M-\epsilon\right\}+\widetilde{B}_{N}\geq1-\eta/(k+1),\quad i=1,2,\dots,k,\]
and
\[P_{d}\left\{\sup_{1\leq i\leq k} \sup_{f,g\in\mathcal{F}_{i}}|\mathbb{H}_{N}'f-\mathbb{H}_{N}'g|\leq\epsilon\right\}+\widetilde{D}_{N}\geq1-\eta/(k+1),\]
and from this it follows that condition (\ref{maximum_CAT}) holds with $\widetilde{E}_{N}:=k \widetilde{B}_{N}+\widetilde{D}_{N}$.

Next, consider an arbitrary $\eta>0$ and put $\epsilon=\epsilon_{m}$ for an arbitrary sequence $\epsilon_{m}\downarrow 0$. Let $\{\mathcal{F}=\cup_{i=1}^{k_{m}}\mathcal{F}_{m,i}\}_{m=1}^{\infty}$ be a corresponding sequence of partitions such that 
\begin{equation}\label{FA_general_definition_proof}
\begin{split}
&P_{d}\left\{\sup_{1\leq i\leq k_{m}} \sup_{f,g\in\mathcal{F}_{m,i}}|\mathbb{H}_{N}'f-\mathbb{H}_{N}'g|>\epsilon_{m}\right\}<2^{-m}\eta+\widetilde{D}_{m,N}\\
&\text{ for every }N=1,2,\dots
\end{split}
\end{equation}
for some sequences of non negative random variables $\widetilde{D}_{m,N}$ which go to zero in probability (almost surely) when $m=1,2,\dots$ is kept fixed and $N$ goes to infinity. The conditional probability version of finite approximation ensures that for every fixed $m=1,2,\dots$ there exist such a partition $\mathcal{F}=\cup_{i=1}^{k_{m}}\mathcal{F}_{m,i}$ and such a sequence of random variables $\{\widetilde{D}_{m,N}\}_{N=1}^{\infty}$. Now, denote by $z_{m,1}, z_{m,2},\dots, z_{m,p_{m}}\in l^{\infty}(\mathcal{F})$ the functions which are constant on each partition set $\mathcal{F}_{m,i}$ and which can take on only the values $\pm\epsilon_{m}, \pm 2\epsilon_{m},\dots, \pm \lfloor M/\epsilon_{m}\rfloor$. Note for each $m$ there are only finitely many such functions, i.e. that $p_{m}<\infty$ for every $m=1,2,\dots$. Let $K_{m}$ denote the union of the $p_{m}$ closed balls of radius $\epsilon_{m}$ around each function $z_{m,i}$. Then, 
\[\lVert \mathbb{H}_{N}'\rVert_{\mathcal{F}}\leq M\quad\text{ and }\quad \sup_{1\leq i\leq k_{m}} \sup_{f,g\in\mathcal{F}_{m,i}}|\mathbb{H}_{N}'f-\mathbb{H}_{N}'g|\leq\epsilon_{m}\]
implies that $\mathbb{H}_{N}'\in K_{m}$. Consider the set $K:=\cap_{m=1}^{\infty}K_{m}$ and note that $K$ is closed and totally bounded and hence compact (since $l^{\infty}(\mathcal{F})$ is complete). Moreover, it can be shown that for every $\delta>0$ there is a finite $m$ such that $\cap_{i=1}^{m}K_{i}\subset K^{\delta}$ (the proof of this claim is given in the proof of Theorem 1.5.6 on pages 36 and 37 in \cite{vdVW}). Using this fact along with condition (\ref{maximum_CAT}) and condition (\ref{FA_general_definition_proof}) yields
\[P_{d}\{\mathbb{H}_{N}'\in K^{\delta}\}\geq P_{d}\{\mathbb{H}_{N}'\in \cap_{i=1}^{m}K_{i}\}\geq 1-2\eta-\sum_{i=1}^{m}\widetilde{D}_{i,N}-\widetilde{E}_{N}\]
which completes the proof.
\end{proof}

\begin{rem}\label{remark_conditional_AEC}
Consider the case where $\mathcal{F}$ is a class of measurable functions $f:\mathcal{Y}\mapsto\mathbb{R}$ and where $\{\mathbb{H}_{N}'\}_{N=1}^{\infty}$ is a sequence of \hyperlink{HTEP}{HTEP}s as defined in (\ref{HT_empirical_process}). It is not difficult to show that in this case $\{\mathbb{H}_{N}'\}_{N=1}^{\infty}$ is \hyperlink{conditional_AEC}{conditionally AEC} w.r.t. to a given semimetric $\rho$ (see (ii) in the statement of the previous theorem) if and only if
\begin{equation}\label{def_conditional_AEC}
P_{d}\left\{\lVert\mathbb{H}_{N}'\rVert_{\mathcal{F}_{\delta_{N}}}>\epsilon\right\}\overset{P*(as*)}{\rightarrow}0\quad\text{ for every }\epsilon>0\text{ and for every }\delta_{N}\downarrow 0,
\end{equation}
where
\[\mathcal{F}_{\delta}:=\{f-g:f,g\in\mathcal{F}\wedge\rho(f,g)<\delta\},\quad\delta>0.\]
\end{rem}

The next theorem shows that \hyperlink{CAT}{CAT} is a necessary condition for conditional weak convergence.

\begin{thm}
Let $\mathbb{H}'$ be a Borel measurable and tight mapping from some probability space into $l^{\infty}(\mathcal{F})$ and assume that condition (\ref{CWC_H}) holds. Then it follows that the sequence $\{\mathbb{H}_{N}'\}_{N=1}^{\infty}$ satisfies the probability (almost sure) version of \hyperlink{CAT}{CAT}.
\end{thm}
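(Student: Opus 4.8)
The plan is to mimic the classical argument by which (unconditional) weak convergence to a tight limit forces asymptotic tightness, adapting it to the design-conditional setting by using condition (\ref{CWC_H}) in place of (\ref{WC_H}). Fix $\eta>0$. Since $\mathbb{H}'$ is tight, there is a compact set $K\subset l^{\infty}(\mathcal{F})$ with $P\{\mathbb{H}'\in K\}\ge 1-\eta/2$, and this is the compact set I would use in the definition of \hyperlink{CAT}{CAT} at level $\eta$. The whole point is then to transfer information about the limit $\mathbb{H}'$ to the prelimit objects $\mathbb{H}_{N}'$ using \emph{only} the convergence of design expectations of $BL_{1}$-functions, and to do so with a single explicit test function for each $\delta$.

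To build the test function, fix $\delta>0$, set $d(z,K):=\inf_{z'\in K}\lVert z-z'\rVert_{\mathcal{F}}$, put $\delta_{0}:=\delta\wedge 1$, and define $h_{\delta}(z):=(\delta_{0}-d(z,K))_{+}$. Because $z\mapsto d(z,K)$ is $1$-Lipschitz, $h_{\delta}$ is $1$-Lipschitz and takes values in $[0,\delta_{0}]\subseteq[0,1]$, so $h_{\delta}\in BL_{1}(l^{\infty}(\mathcal{F}))$. The reason for using $(\delta_{0}-d(z,K))_{+}$ rather than the more obvious $(1-d(z,K)/\delta)_{+}$ is precisely to keep the Lipschitz constant equal to $1$ so that the function is admissible in $BL_{1}$; this little normalization is the only genuinely design-specific trick. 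By construction $h_{\delta}$ satisfies the sandwich $\delta_{0}\mathbf{1}_{K}\le h_{\delta}\le \delta_{0}\mathbf{1}_{K^{\delta_{0}}}\le\delta_{0}\mathbf{1}_{K^{\delta}}$, using that $h_{\delta}=\delta_{0}$ on the closed set $K$, that $h_{\delta}=0$ off $K^{\delta_{0}}$, and that $K^{\delta_{0}}\subseteq K^{\delta}$.

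Now I would combine the two sides of the sandwich. On the limit side, $Eh_{\delta}(\mathbb{H}')\ge\delta_{0}P\{\mathbb{H}'\in K\}\ge\delta_{0}(1-\eta/2)$, which is well defined since $h_{\delta}$ is continuous and $\mathbb{H}'$ is Borel measurable. On the prelimit side, integrating the pointwise bound $h_{\delta}(\mathbb{H}_{N}')\le\delta_{0}\mathbf{1}\{\mathbb{H}_{N}'\in K^{\delta}\}$ with respect to the design gives $E_{d}h_{\delta}(\mathbb{H}_{N}')\le\delta_{0}\,P_{d}\{\mathbb{H}_{N}'\in K^{\delta}\}$; this step is free of measurability subtleties because, for $\mathbf{Y}_{N},\mathbf{X}_{N}$ kept fixed, $E_{d}$ of a function of $\mathbb{H}_{N}'$ is a finite weighted sum over the $2^{N}$ possible samples $\mathbf{s}_{N}$. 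Writing $R_{N}:=\sup_{h\in BL_{1}}|E_{d}h(\mathbb{H}_{N}')-Eh(\mathbb{H}')|$, which is exactly the quantity assumed to vanish in (\ref{CWC_H}), and using $h_{\delta}\in BL_{1}$, I obtain the pointwise chain
\[
\delta_{0}\,P_{d}\{\mathbb{H}_{N}'\in K^{\delta}\}\;\ge\;E_{d}h_{\delta}(\mathbb{H}_{N}')\;\ge\;Eh_{\delta}(\mathbb{H}')-R_{N}\;\ge\;\delta_{0}(1-\eta/2)-R_{N},
\]
so that $P_{d}\{\mathbb{H}_{N}'\in K^{\delta}\}+R_{N}/\delta_{0}\ge 1-\eta/2\ge 1-\eta$ for every $N$. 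Setting $\widetilde{A}_{N}:=R_{N}^{*}/\delta_{0}$ then verifies (\ref{conditional_asy_tightness}).

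The one place requiring real care, and the step I expect to be the main obstacle, is the measurability bookkeeping for the remainder term. Since $R_{N}$ is a supremum over the uncountable family $BL_{1}$ it need not be measurable, so (\ref{CWC_H}) is a statement about outer convergence; I would pass to the measurable cover $R_{N}^{*}$ to produce the random variables $\widetilde{A}_{N}$ that the definition of \hyperlink{CAT}{CAT} demands. Because $\delta_{0}$ is a fixed positive constant, $\widetilde{A}_{N}=R_{N}^{*}/\delta_{0}$ inherits convergence to $0$ in probability (almost surely) from $R_{N}\overset{P*(as*)}{\rightarrow}0$, using that $R_{N}\overset{P*}{\rightarrow}0$ is equivalent to $R_{N}^{*}\overset{P}{\rightarrow}0$ and likewise for the almost-sure versions. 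This delivers the probability (almost sure) version of \hyperlink{CAT}{CAT} from the corresponding version of condition (\ref{CWC_H}), as claimed.
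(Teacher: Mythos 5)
Your proposal is correct and follows essentially the same route as the paper: both approximate the indicator of a set by a Lipschitz function of the distance $d(z,K)$, sandwich it between $\mathbf{1}_{K}$ and $\mathbf{1}_{K^{\delta}}$, and use the $BL_{1}$-supremum in (\ref{CWC_H}) as the remainder $\widetilde{A}_{N}$ (the paper writes $(1-\delta^{-1}d(z,B))^{+}$ scaled by $\delta$, which is your $(\delta_{0}-d(z,K))_{+}$ up to the harmless $\delta\wedge 1$ truncation). Your explicit handling of the measurable cover $R_{N}^{*}$ is a small refinement the paper leaves implicit.
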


\begin{proof}
Let $B$ be an arbitrary Borel subset of $l^{\infty}(\mathcal{F})$ and let
\[d(z,B):=\inf_{z'\in B}\lVert z-z'\rVert_{\mathcal{F}}\]
be the distance between $z\in l^{\infty}(\mathcal{F})$ and the set $B$. Note that for $\delta> 0$ the function
\[h_{B,\delta}(z):=(1-\delta^{-1}d(z,B))^{+},\quad z\in l^{\infty}(\mathcal{F}),\]
has Lipschitz constant $\delta^{-1}$ and that $I(z\in B)\leq h_{B,\delta}(z)\leq I(z\in B^{\delta})$, where $B^{\delta}$ is the open $\delta$-enlargement of the set $B$. Thus, $\delta h_{B,\delta}\in BL_{1}(l^{\infty}(\mathcal{F}))$ and therefore it follows that
\begin{equation*}
\begin{split}
P_{d}\{\mathbb{H}_{N}'\in B^{\delta}\}&=E_{d}I(\mathbb{H}_{N}'\in B^{\delta})\\
&\geq E_{d}h_{B,\delta}(\mathbb{H}_{N}')\\
&\geq P\{\mathbb{H}'\in B\} -\frac{1}{\delta}\sup_{h\in BL_{1}(l^{\infty}(\mathcal{F}))}\left|E_{d}h(\mathbb{H}_{N}')-Eh(\mathbb{H}')\right|.
\end{split}
\end{equation*}
Since \hyperlink{opCWC}{opCWC} (\hyperlink{oasCWC}{oasCWC}) implies that the supremum goes to zero in outer probability (outer almost surely), this shows that for every $\delta>0$ there exists a sequence of random variables $\widetilde{A}_{N}\overset{P(as)}{\rightarrow} 0$ such that 
\[P_{d}\{\mathbb{H}_{N}'\in B^{\delta}\}+\widetilde{A}_{N}\geq P\{\mathbb{H}'\in B\}\]
for every Borel subset $B$ of $l^{\infty}(\mathcal{F})$ and for every $N=1,2,\dots$. Since $\mathbb{H}'$ is tight by assumption, the conclusion of the theorem follows from this.
\end{proof}

Now, consider "conditional weak convergence of the finite-dimensional marginals". Perhaps the most obvious way to define this concept is to require pointwise convergence in probability (pointwise almost sure converence) of the sample design characteristic function for every sequence of finite-dimensional vectors $\{\mathbb{H}_{N}'\mathbf{f}\}_{N=1}^{\infty}:=\{(\mathbb{H}_{N}'f_{1}, \mathbb{H}_{N}'f_{2},\dots, \mathbb{H}_{N}'f_{r})^{\intercal}\}_{N=1}^{\infty}$ with $\mathbf{f}:=(f_{1}, f_{2}, \dots, f_{r})^{\intercal}\in\mathcal{F}^{r}$ and $r=1,2,\dots$. Since we are assuming that the components of the vectors $\mathbb{H}_{N}'\mathbf{f}$ are measurable, \textbf{conditional weak convergence of the finite-dimensional marginals} (henceforth \hypertarget{CWCM}{\textbf{CWCM}}) can therefore be defined as 
\begin{equation}\label{CWCM}
E_{d}\exp\left(i\mathbf{t}^{\intercal}\mathbb{H}_{N}'\mathbf{f}\right)\overset{P(as)}{\rightarrow}E\exp\left(i\mathbf{t}^{\intercal}\mathbb{H}'\mathbf{f}\right)
\end{equation}
for every $\mathbf{t}\in\mathbb{R}^{r}$, $\mathbf{f}\in\mathcal{F}^{r}$ and for every $r=1,2,\dots$, where $\mathbb{H}'\mathbf{f}:=(\mathbb{H}'f_{1}, \mathbb{H}'f_{2},\dots, \mathbb{H}'f_{r})^{\intercal}$ is the finite dimensional vector of random variables corresponding to some $\mathcal{F}$-indexed stochastic process $\mathbb{H}':=\{\mathbb{H}'f:f\in \mathcal{F}\}$.

It is not difficult to prove that \hyperlink{CWCM}{CWCM} is a necessary condition for \hyperlink{opCWC}{opCWC} (\hyperlink{oasCWC}{oasCWC}). The next theorem says that \hyperlink{CWCM}{CWCM} is already equivalent to \hyperlink{opCWC}{opCWC} (\hyperlink{oasCWC}{oasCWC}) if the index set $\mathcal{F}$ is finite. In its statement $z\upharpoonright\mathcal{G}$ indicates the restriction of some function $z\in l^{\infty}(\mathcal{F})$ to a subset $\mathcal{G}$ of $\mathcal{F}$.

\begin{thm}\label{teorema_CWCM}
Let $\mathcal{G}$ be a finite subset of $\mathcal{F}$. Under the assumptions made at the beginning of this section 
\[\sup_{h\in BL_{1}(l^{\infty}(\mathcal{G}))}\left|E_{d}h(\mathbb{H}_{N}'\upharpoonright\mathcal{G})-Eh(\mathbb{H}'\upharpoonright\mathcal{G})\right|\]
is measurable, and \hyperlink{CWCM}{CWCM} is equivalent to
\[\sup_{h\in BL_{1}(l^{\infty}(\mathcal{G}))}\left|E_{d}h(\mathbb{H}_{N}'\upharpoonright\mathcal{G})-Eh(\mathbb{H}'\upharpoonright\mathcal{G})\right|\overset{P(as)}{\rightarrow} 0\quad\text{ for every finite }\mathcal{G}\subset\mathcal{F}.\]
\end{thm}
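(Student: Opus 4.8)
The plan is to reduce everything to finite dimensions. Writing $r:=|\mathcal{G}|$, the space $l^{\infty}(\mathcal{G})$ is isometric to $\mathbb{R}^{r}$ with the supremum norm, so $\mathbb{H}_{N}'\upharpoonright\mathcal{G}$ and $\mathbb{H}'\upharpoonright\mathcal{G}$ are ordinary random vectors. Let $\mu_{N}$ denote the (random) sample-design law of $\mathbb{H}_{N}'\upharpoonright\mathcal{G}$ and $\mu$ the law of $\mathbb{H}'\upharpoonright\mathcal{G}$; the displayed quantity is then the bounded Lipschitz distance $d_{BL}(\mu_{N},\mu)$, which by Dudley's theorem metrizes weak convergence on the separable space $\mathbb{R}^{r}$. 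For the measurability claim I would first reduce the supremum over $BL_{1}(l^{\infty}(\mathcal{G}))$ to a supremum over a fixed \emph{countable} subfamily $\mathcal{H}_{0}$: since $\mathbb{R}^{r}$ is $\sigma$-compact, the space of continuous functions with the topology of uniform convergence on compacta is separable, and the tightness of $\mu_{N}$ and $\mu$ lets one approximate any $h\in BL_{1}$ by an element of $\mathcal{H}_{0}$ uniformly on a compact set carrying all but $\epsilon$ of the mass, so that $\sup_{BL_{1}}=\sup_{\mathcal{H}_{0}}$ for every pair of laws. Each map $\omega\mapsto E_{d}h(\mathbb{H}_{N}'\upharpoonright\mathcal{G})=\sum_{\mathbf{s}_{N}}h(\cdots)P_{d}\{\mathbf{S}_{N}=\mathbf{s}_{N}\}$ is measurable by the measurable-design assumptions of Section \ref{Section_notation_definitions}, so the countable supremum is measurable.

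The direction from the bounded Lipschitz convergence to \hyperlink{CWCM}{CWCM} is the easy one. For fixed $\mathbf{t}\in\mathbb{R}^{r}$ the maps $z\mapsto\cos(\mathbf{t}^{\intercal}z)$ and $z\mapsto\sin(\mathbf{t}^{\intercal}z)$ are bounded and Lipschitz relative to the supremum norm, with constant $\lVert\mathbf{t}\rVert_{1}$, so suitable affine rescalings of them lie in $BL_{1}(l^{\infty}(\mathcal{G}))$. This gives a bound of the form
\[\left|E_{d}\exp(i\mathbf{t}^{\intercal}\mathbb{H}_{N}'\mathbf{f})-E\exp(i\mathbf{t}^{\intercal}\mathbb{H}'\mathbf{f})\right|\leq C_{\mathbf{t}}\,d_{BL}(\mu_{N},\mu),\]
so that $d_{BL}(\mu_{N},\mu)\overset{P(as)}{\rightarrow}0$ forces \hyperlink{CWCM}{CWCM}.

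The substantive direction is \hyperlink{CWCM}{CWCM} $\Rightarrow d_{BL}(\mu_{N},\mu)\overset{P(as)}{\rightarrow}0$, i.e. weak convergence $\mu_{N}\to\mu$ in probability (almost surely). I would treat the almost sure version first and recover the in-probability version through the subsequence characterisation of convergence in probability. Since \hyperlink{CWCM}{CWCM} gives, for \emph{each fixed} $\mathbf{t}$ separately, convergence of $\hat{\mu}_{N}(\mathbf{t})$ to $\hat{\mu}(\mathbf{t})$, a Fubini argument converts this into pathwise information: applied on each coordinate axis $\{u e_{j}:u\in\mathbb{R}\}$ it shows that for almost every $\omega$ one has $\hat{\mu}_{N}(u e_{j})(\omega)\to\hat{\mu}(u e_{j})$ for Lebesgue–almost every $u$ and every $j$, while intersecting the almost sure convergence over a countable dense set $\{\mathbf{t}_{k}\}\subset\mathbb{R}^{r}$ gives $\hat{\mu}_{N}(\mathbf{t}_{k})(\omega)\to\hat{\mu}(\mathbf{t}_{k})$ for all $k$. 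Fixing such an $\omega$, the classical tail bound
\[\mu_{N}\{|x_{j}|>2/\delta\}\leq\tfrac{1}{\delta}\int_{-\delta}^{\delta}\bigl(1-\mathrm{Re}\,\hat{\mu}_{N}(u e_{j})\bigr)\,du,\]
together with dominated convergence and the continuity of $\hat{\mu}$ at the origin, yields tightness of $\{\mu_{N}(\omega)\}_{N}$; and every weak subsequential limit $\rho$ must satisfy $\hat{\rho}(\mathbf{t}_{k})=\hat{\mu}(\mathbf{t}_{k})$ for all $k$, so density and continuity of characteristic functions force $\rho=\mu$. Tightness plus a unique subsequential limit gives $\mu_{N}(\omega)\to\mu$ weakly for almost every $\omega$. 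The in-probability version then follows from the subsequence argument, the only change being that along the extracted subsequence one secures the axiswise integrals $\int_{-\delta}^{\delta}|\hat{\mu}_{N}(u e_{j})-\hat{\mu}(u e_{j})|\wedge1\,du\to0$ almost surely (again by Fubini and dominated convergence) in order to obtain tightness.

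I expect the main obstacle to be exactly this passage from coordinatewise stochastic convergence of characteristic functions to pathwise weak convergence. The difficulty is that \hyperlink{CWCM}{CWCM} supplies convergence for each fixed $\mathbf{t}$ \emph{separately}, whereas Lévy's continuity theorem needs, for a single realisation $\omega$, convergence at sufficiently many $\mathbf{t}$ \emph{simultaneously}; since simultaneous almost sure convergence over an uncountable family of directions is unavailable, one must split the argument into the Fubini/tail-bound step that delivers tightness along the axes and the separate countable-dense identification of the limit, rather than invoking Lévy directly.
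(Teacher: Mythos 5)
Your argument is correct and complete. Note that the paper does not actually spell out a proof of this theorem: it simply defers to Corollary 3.1 of \cite{Pasquazzi_2019}, so there is no in-text argument to compare against line by line. Your route — identifying the displayed supremum with the bounded-Lipschitz distance between the (finitely supported, hence measurable) design law of $\mathbb{H}_{N}'\upharpoonright\mathcal{G}$ and the law of $\mathbb{H}'\upharpoonright\mathcal{G}$ on $\mathbb{R}^{r}$, reducing the supremum to a countable subfamily for measurability, and handling the substantive direction via the Fubini trick plus the truncation inequality for tightness, a countable dense set of frequencies for identification, and the subsequence characterization for the in-probability version — is a standard and fully adequate finite-dimensional L\'evy-continuity argument, and is surely the same in spirit as the cited proof; you correctly isolate and resolve the one genuine difficulty, namely that \hyperlink{CWCM}{CWCM} only gives convergence at each fixed $\mathbf{t}$ separately.
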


\begin{proof}
The proof is the same as the proof of Corollary 3.1 in \cite{Pasquazzi_2019}.
\end{proof}

Up to now it has already been shown that \hyperlink{CAT}{CAT} and \hyperlink{CWCM}{CWCM} are necessary conditions for \hyperlink{opCWC}{opCWC} (\hyperlink{oasCWC}{oasCWC}) and that \hyperlink{CWCM}{CWCM} is equivalent to \hyperlink{opCWC}{opCWC} (\hyperlink{oasCWC}{oasCWC}) when $\mathcal{F}$ is finite. The next theorem says that \hyperlink{CAT}{CAT} and \hyperlink{CWCM}{CWCM} together are sufficient conditions for \hyperlink{opCWC}{opCWC} (\hyperlink{oasCWC}{oasCWC}) regardless of the cardinality of $\mathcal{F}$.

\begin{thm}\label{sufficient_conditions_for_CWC}
Assume that $\{\mathbb{H}_{N}'\}_{N=1}^{\infty}$ satisfies \hyperlink{CAT}{CAT} and \hyperlink{CWCM}{CWCM} for some stochastic process $\{\mathbb{H}'f:f\in\mathcal{F}\}$. Then there exists a version of the stochastic process $\mathbb{H}'$ which is a Borel measurable and tight mapping from some probability space into $l^{\infty}(\mathcal{F})$ such that \hyperlink{opCWC}{opCWC} (\hyperlink{oasCWC}{oasCWC}) as defined in (\ref{CWC_H}) holds.
\end{thm}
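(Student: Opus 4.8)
The plan is to imitate the sufficiency direction of Theorem 1.5.4 in \cite{vdVW}, carrying the design expectation $E_{d}$ and the vanishing remainder sequences through every estimate. First I would settle the \emph{existence} of a tight Borel version of $\mathbb{H}'$. Taking unconditional expectations in the tightness inequality (\ref{conditional_asy_tightness}) shows that \hyperlink{CAT}{CAT} forces $\{\mathbb{H}_{N}'\}_{N=1}^{\infty}$ to be asymptotically tight in the ordinary unconditional sense, while bounded convergence applied to the characteristic functions in (\ref{CWCM}) shows that \hyperlink{CWCM}{CWCM} forces ordinary (unconditional) weak convergence of all finite-dimensional marginals. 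Theorem 1.5.4 in \cite{vdVW} then produces a Borel measurable and tight $\mathbb{H}'$ with $\mathbb{H}_{N}'\rightsquigarrow\mathbb{H}'$ unconditionally, whose finite-dimensional laws coincide with those in (\ref{CWCM}). Moreover, invoking Theorem \ref{Theorem_CAT_characterizations} I would fix once and for all a semimetric $\rho$ for which $\mathcal{F}$ is totally bounded and $\{\mathbb{H}_{N}'\}_{N=1}^{\infty}$ is \hyperlink{conditional_AEC}{conditionally AEC}; passing again to unconditional expectations, this same $\rho$ makes the sequence unconditionally AEC, so the tight limit $\mathbb{H}'$ concentrates on $UC(\mathcal{F},\rho)$, i.e.\ has $\rho$-uniformly continuous sample paths.

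With this $\rho$ in hand the core estimate is a three-term split through a blocking operator. Given $\epsilon,\eta>0$ I would choose $\delta>0$ small enough that, simultaneously, the conditional AEC bound (\ref{AEC_general_definition}) holds at level $(\epsilon,\eta,\delta)$ and the limit modulus $E[\,1\wedge\sup_{\rho(f,g)<\delta}|\mathbb{H}'f-\mathbb{H}'g|\,]$ is below $\epsilon$ (the latter is possible precisely because $\mathbb{H}'$ is $\rho$-uniformly continuous and tight). Partitioning $\mathcal{F}=\cup_{i=1}^{k}\mathcal{F}_{i}$ into sets of $\rho$-diameter below $\delta$, picking representatives $f_{i}\in\mathcal{F}_{i}$ and writing $\mathcal{G}=\{f_{1},\dots,f_{k}\}$, I define the blocking map $\Pi_{\mathcal{G}}$ by $(\Pi_{\mathcal{G}}z)(f):=z(f_{i})$ for $f\in\mathcal{F}_{i}$. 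For every $h\in BL_{1}(l^{\infty}(\mathcal{F}))$ the triangle inequality gives
\begin{equation*}
\begin{split}
\bigl|E_{d}h(\mathbb{H}_{N}')-Eh(\mathbb{H}')\bigr|\leq{}&\bigl|E_{d}h(\mathbb{H}_{N}')-E_{d}h(\Pi_{\mathcal{G}}\mathbb{H}_{N}')\bigr|\\
&+\bigl|E_{d}h(\Pi_{\mathcal{G}}\mathbb{H}_{N}')-Eh(\Pi_{\mathcal{G}}\mathbb{H}')\bigr|+\bigl|Eh(\Pi_{\mathcal{G}}\mathbb{H}')-Eh(\mathbb{H}')\bigr|.
\end{split}
\end{equation*}

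Then I would bound the three terms uniformly in $h$. For the first term, the Lipschitz property of $h$ together with $\|\mathbb{H}_{N}'-\Pi_{\mathcal{G}}\mathbb{H}_{N}'\|_{\mathcal{F}}\leq\sup_{i}\sup_{f,g\in\mathcal{F}_{i}}|\mathbb{H}_{N}'f-\mathbb{H}_{N}'g|\leq\sup_{\rho(f,g)<\delta}|\mathbb{H}_{N}'f-\mathbb{H}_{N}'g|$ yields, after taking $E_{d}$ and using $1\wedge x\leq\epsilon+\mathbf{1}\{x>\epsilon\}$, the pointwise bound $\epsilon+P_{d}\{\sup_{\rho(f,g)<\delta}|\mathbb{H}_{N}'f-\mathbb{H}_{N}'g|>\epsilon\}\leq\epsilon+\eta+\widetilde{C}_{N}$ by (\ref{AEC_general_definition}). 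For the third term the same estimate with $\mathbb{H}'$ in place of $\mathbb{H}_{N}'$ gives the deterministic bound $\epsilon$ through the choice of $\delta$. The middle term is where the finite-dimensional hypothesis enters: since $z\mapsto h(\Pi_{\mathcal{G}}z)$ depends on $z$ only through $z\upharpoonright\mathcal{G}$ and is $1$-Lipschitz in $\|\cdot\|_{\mathcal{G}}$, it induces an element of $BL_{1}(l^{\infty}(\mathcal{G}))$, so the middle term is dominated by $\sup_{\tilde{h}\in BL_{1}(l^{\infty}(\mathcal{G}))}|E_{d}\tilde{h}(\mathbb{H}_{N}'\upharpoonright\mathcal{G})-E\tilde{h}(\mathbb{H}'\upharpoonright\mathcal{G})|$, which is measurable and converges to $0$ in probability (almost surely) by \hyperlink{CWCM}{CWCM} and Theorem \ref{teorema_CWCM}. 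Collecting the three bounds gives the pointwise domination $\sup_{h}|E_{d}h(\mathbb{H}_{N}')-Eh(\mathbb{H}')|\leq 2\epsilon+\eta+R_{N}$ with $R_{N}\overset{P(as)}{\rightarrow}0$, so the $\limsup_{N}$ of the left side is at most $2\epsilon+\eta$ in outer probability (outer almost surely); letting $\epsilon,\eta\downarrow0$ (with a diagonal extraction over a countable sequence of partitions in the almost sure case) establishes (\ref{CWC_H}).

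The main obstacle I anticipate is not any single estimate but the coordination of the semimetric: I must use \emph{one} semimetric $\rho$ that simultaneously certifies the conditional equicontinuity of $\{\mathbb{H}_{N}'\}$ (needed for the first term) and the sample-path regularity of the unconditional limit $\mathbb{H}'$ (needed for the third term), and the argument that conditional AEC w.r.t.\ $\rho$ transfers, via unconditional expectations, to $\rho$-uniform continuity of $\mathbb{H}'$ is the delicate link. A secondary difficulty is the measurability bookkeeping: the left-hand side of (\ref{CWC_H}) need not be measurable, so every bound must be arranged as a genuine pointwise domination by a measurable remainder converging in the prescribed mode, and the two versions ($P^{*}$ and $as^{*}$) must be propagated in lockstep, exactly as signalled by the $\overset{P*(as*)}{\rightarrow}$ convention.
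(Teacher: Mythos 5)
Your second half (the three-term split through the blocking map, the reduction of the middle term to $BL_{1}(l^{\infty}(\mathcal{G}))$ via Theorem \ref{teorema_CWCM}, and the conditional AEC bound for the first term) is essentially the paper's argument and is sound. The gap is in your first step, where you construct the limit. You claim that ``taking unconditional expectations'' in (\ref{conditional_asy_tightness}) yields ordinary asymptotic tightness of $\{\mathbb{H}_{N}'\}$, and likewise that conditional AEC transfers to unconditional AEC over all of $\mathcal{F}$. This is exactly the step the paper is at pains to avoid: for uncountable $\mathcal{F}$ the quantities $P_{d}\{\mathbb{H}_{N}'\in K^{\delta}\}$ and $P_{d}\{\sup_{\rho(f,g)<\delta}|\mathbb{H}_{N}'f-\mathbb{H}_{N}'g|>\epsilon\}$ are in general \emph{non-measurable} functions of $(\mathbf{Y}_{N},\mathbf{X}_{N})$, so they are not conditional probabilities in the proper sense and no tower property is available. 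Worse, Fubini's theorem for outer expectations (Lemma 1.2.6 in \cite{vdVW}) gives the inequality in the unhelpful direction: the iterated outer expectation $E^{*}\bigl[P_{d}\{\mathbb{H}_{N}'\notin K^{\delta}\}\bigr]$ is a \emph{lower} bound for $P^{*}\{\mathbb{H}_{N}'\notin K^{\delta}\}$, so a bound on the former says nothing about the latter. This is precisely why the paper stresses (Remark \ref{rem_measurable_suprema} and the discussion after the definition of \hyperlink{oasCWC}{oasCWC}) that conditional weak convergence does not by itself imply unconditional weak convergence or asymptotic tightness.

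The paper's workaround, which your argument is missing, is to restrict to a countable $\rho$-dense subset $\mathcal{G}\subset\mathcal{F}$. Over $\mathcal{G}$ the relevant suprema are measurable, so the $P_{d}$-probabilities in (\ref{AEC_general_definition}) become genuine conditional probabilities, unconditional expectations can be taken, and $\{\mathbb{H}_{N}'\upharpoonright\mathcal{G}\}$ is unconditionally asymptotically $\rho$-equicontinuous and tight in $l^{\infty}(\mathcal{G})$. Theorem 1.5.4 and Addendum 1.5.8 in \cite{vdVW} then produce a tight limit on $l^{\infty}(\mathcal{G})$ with uniformly $\rho$-continuous paths, which is extended to $l^{\infty}(\mathcal{F})$ by uniform continuity; one must then still check (as the paper does with the auxiliary set $\mathcal{G}^{\dagger}$ and a $\rho$-approximation argument) that the finite-dimensional laws of this extension agree with those of the process $\mathbb{H}'$ appearing in \hyperlink{CWCM}{CWCM} at indices outside $\mathcal{G}$. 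Without this detour your ``delicate link'' --- the $\rho$-uniform continuity of the limit for the same $\rho$ that certifies conditional AEC --- is not established, and the bound on your third term has no foundation.
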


\begin{proof}
Consider the stochastic process $\{\mathbb{H}'f:f\in\mathcal{F}\}$ from \hyperlink{CWCM}{CWCM}. The first step of the proof is to show that \hyperlink{CAT}{CAT} and \hyperlink{CWCM}{CWCM} imply that there exists a version of $\mathbb{H}'$ with the following properties:
\begin{itemize}
\item[a) ] $\mathbb{H}'$ is a Borel measurable and tight mapping into $l^{\infty}(\mathcal{F})$;
\item[b) ] the sample paths $f\mapsto\mathbb{H}'f$ are uniformly $\rho$-continuous with probability $1$.
\end{itemize}
To this aim note that by the characterization of \hyperlink{CAT}{CAT} given in (ii) of Theorem \ref{Theorem_CAT_characterizations} there must exists a semimetric $\rho$ for which $\mathcal{F}$ is totally bounded and for which $\{\mathbb{H}_{N}'\}_{N=1}^{\infty}$ is \hyperlink{conditional_AEC}{conditionally AEC}. Now, consider a countable and dense (w.r.t. $\rho$) subset $\mathcal{G}$ of $\mathcal{F}$ and note that the random variable
\[\sup_{f,g\in\mathcal{G}:\rho(f,g)<\delta}|\mathbb{H}_{N}'f-\mathbb{H}_{N}'g|\]
is measurable. Conclude that 
\[P_{d}\left\{\sup_{f,g\in\mathcal{G}:\rho(f,g)<\delta}|\mathbb{H}_{N}'f-\mathbb{H}_{N}'g|>\epsilon\right\}\]
is a conditional probability in the proper sense and use this fact along with (\ref{AEC_general_definition}) in order to show that for every $\epsilon,\eta>0$ there exists a $\delta>0$ such that
\[\limsup_{N\rightarrow\infty}P\left\{\sup_{f,g\in\mathcal{G}:\rho(f,g)<\delta}|\mathbb{H}_{N}'f-\mathbb{H}_{N}'g|>\epsilon\right\}<\eta,\]
i.e. that the sequence of mappings $\{\mathbb{H}_{N}'\upharpoonright\mathcal{G}\}_{N=1}^{\infty}$ is asymptotically uniformly $\rho$-equicontinuous in probability as defined on page 37 in \cite{vdVW}. By Theorem 1.5.7 on the same page in \cite{vdVW} it follows that $\{\mathbb{H}_{N}'\upharpoonright\mathcal{G}\}_{N=1}^{\infty}$ is asymptotically tight, and since \hyperlink{CWCM}{CWCM} implies unconditional convergence of the marginal distributions of $\{\mathbb{H}_{N}'\upharpoonright\mathcal{G}\}_{N=1}^{\infty}$, it follows by Theorem 1.5.4 on page 35 in \cite{vdVW} that there exists a Borel measurable and tight mapping from some probability space into $l^{\infty}(\mathcal{G})$, call it $\widetilde{\mathbb{H}}'$, such that $\mathbb{H}_{N}'\rightsquigarrow\widetilde{\mathbb{H}}'$ in $l^{\infty}(\mathcal{G})$. Moreover, by Addendum 1.5.8 on page 37 in \cite{vdVW} it follows that the sample paths of $\widetilde{\mathbb{H}}'$ are uniformly $\rho$-continuous with probability $1$. Let $c:l^{\infty}(\mathcal{G})\mapsto l^{\infty}(\mathcal{F})$ be the mapping which carries the uniformly $\rho$-continuous functions in $l^{\infty}(\mathcal{G})$ to their uniformly $\rho$-continuous extensions in $l^{\infty}(\mathcal{F})$, and which transforms all other functions in $l^{\infty}(\mathcal{G})$ into the zero function in $l^{\infty}(\mathcal{F})$. Then the mapping $c$ is certainly measurable and $\widehat{\mathbb{H}}':=c(\widetilde{\mathbb{H}}')$ is is a continuous function of $\widetilde{\mathbb{H}}'$ with probability $1$. It follows that $\widehat{\mathbb{H}}'$ is a Borel measurable and tight mapping into $l^{\infty}(\mathcal{F})$ whose sample paths are uniformly $\rho$-continuous with probability $1$. In order to prove a) and b) it suffices now to show that the finite dimensional distributions of $\widehat{\mathbb{H}}'$ are the same as those of the limit process $\{\mathbb{H}'f:f\in\mathcal{F}\}$ from \hyperlink{CWCM}{CWCM}, i.e. that every finite dimensional vector $\mathbf{f}:=(f_{1},f_{2},\dots, f_{r})^{\intercal}\in\mathcal{F}^{r}$ satisfies the condition
\begin{equation}\label{marginali_uguali}
Eg(\widehat{\mathbb{H}}'\mathbf{f})=Eg(\mathbb{H}'\mathbf{f})\quad\text{ for every }g\in C_{b}(\mathbb{R}^{r}),
\end{equation}
where $C_{b}(\mathbb{R}^{r})$ is the set of all continuous and bounded functions $g:\mathbb{R}^{r}\mapsto\mathbb{R}$. If all the components of $\mathbf{f}$ are elements of $\mathcal{G}$, then (\ref{marginali_uguali}) follows directly from \hyperlink{CWCM}{CWCM} and the definition of $\widehat{\mathbb{H}}'$. Otherwise, if some or all of the components of $\mathbf{f}$ are elements of $\mathcal{F}$ but not of $\mathcal{G}$, then there exists a sequence $\{\mathbf{f}_{\nu}\}_{\nu=1}^{\infty}:=\{(f_{\nu,1}, f_{\nu,2}, \dots, f_{\nu,r})^{\intercal}\}_{\nu=1}^{\infty}$ in $\mathcal{G}^{r}$ such that $\rho_{r}(\mathbf{f}_{\nu},\mathbf{f}):=\max_{1\leq i\leq r}\rho(f_{i},f_{\nu,i})\rightarrow 0$. Since the sample paths of $\widehat{\mathbb{H}}'$ are uniformly $\rho$-continuous with probability $1$, it follows that $\widehat{\mathbb{H}}'\mathbf{f}_{\nu}\overset{as}{\rightarrow}\widehat{\mathbb{H}}'\mathbf{f}$ and hence that $\widehat{\mathbb{H}}'\mathbf{f}_{\nu}\rightsquigarrow\widehat{\mathbb{H}}'\mathbf{f}$ which is the same as $\mathbb{H}'\mathbf{f}_{\nu}\rightsquigarrow\widehat{\mathbb{H}}'\mathbf{f}$. Now, consider $\mathcal{G}^{\dagger}:=\mathcal{G}\cup\{f_{1}, f_{2}, \dots, f_{r}\}$ and define $\widehat{\mathbb{H}}^{\dagger}$ in terms of $\mathcal{G}^{\dagger}$ in the same way as $\widehat{\mathbb{H}}'$ has been defined above in term of $\mathcal{G}$. Then, as before, it follows by \hyperlink{CWCM}{CWCM} that (\ref{marginali_uguali}) holds with $\widehat{\mathbb{H}}^{\dagger}\mathbf{f}$ in place of $\widehat{\mathbb{H}}'\mathbf{f}$, and since the sample paths of $\widehat{\mathbb{H}}^{\dagger}$ are uniformly $\rho$-continuous with probability $1$, it follows that $\widehat{\mathbb{H}}^{\dagger}\mathbf{f}_{\nu}\rightsquigarrow\widehat{\mathbb{H}}^{\dagger}\mathbf{f}$. Since for every $\nu=1,2,\dots$ the three random vectors $\widehat{\mathbb{H}}^{\dagger}\mathbf{f}_{\nu}$, $\widehat{\mathbb{H}}'\mathbf{f}_{\nu}$ and $\mathbb{H}'\mathbf{f}_{\nu}$ have all the same distribution and since the distributions of $\widehat{\mathbb{H}}^{\dagger}\mathbf{f}$ and $\mathbb{H}\mathbf{f}$ are the same as well, this implies
\[Eg(\mathbb{H}'\mathbf{f})=Eg\left(\widehat{\mathbb{H}}^{\dagger}\mathbf{f}\right)=Eg\left(\widehat{\mathbb{H}}'\mathbf{f}\right)\quad\text{ for every }g\in C_{b}(\mathbb{R}^{r}),\]
and hence that condition (\ref{marginali_uguali}) holds also when some or all of the components of $\mathbf{f}$ are in $\mathcal{F}$ but not in $\mathcal{G}$. This shows that the marginal distributions of $\widehat{\mathbb{H}}'$ are the same as those of the stochastic process $\{\mathbb{H}'f:f\in\mathcal{F}\}$ and hence that there exists a version of the latter process which satisfies a) and b).

Next, it will be shown that if $\mathbb{H}'$ is a version of $\{\mathbb{H}'f:f\in\mathcal{F}\}$ which satisfies a) and b), then \hyperlink{opCWC}{opCWC} (\hyperlink{oasCWC}{oasCWC}) as defined in (\ref{CWC_H}) holds (this part of the proof is essentially the same as the proof of Theorem 2.9.6 on page 182 in \cite{vdVW}). To this aim define for each $\delta>0$ a corresponding set $\mathcal{G}_{\delta}$ which contains the centers of a collection of open balls of $\rho$-radius $\delta$ which cover $\mathcal{F}$. Since $\mathcal{F}$ is totally bounded w.r.t. $\rho$, each $\mathcal{G}_{\delta}$ can be chosen to be finite. Then define for each fixed $\delta>0$ a mapping $\Pi_{\delta}:\mathcal{F}\mapsto\mathcal{G}_{\delta}$ which maps $f\in\mathcal{F}$ to the element $g\in\mathcal{G}_{\delta}$ which is closest to $f$. If there are more than one $g\in\mathcal{G}_{\delta}$ which minimize $\rho(f,g)$, $\Pi_{\delta}(f)$ can be defined to be any such $g$. Since the sample paths of $\mathbb{H}'$ are uniformly $\rho$-continuous with probability $1$, it follows that $\lVert\mathbb{H}'\circ \Pi_{\delta}-\mathbb{H}'\rVert_{\mathcal{F}}\overset{as}{\rightarrow}0$ if $\delta\rightarrow 0$ and hence that
\[\sup_{h\in BL_{1}(l^{\infty}(\mathcal{F}))}|Eh(\mathbb{H}'\circ \Pi_{\delta})-Eh(\mathbb{H}')|\overset{as}{\rightarrow}0\quad\text{ if }\delta\rightarrow 0.\]
Next, it will be shown that
\begin{equation}\label{marginal_convergence}
\sup_{h\in BL_{1}(l^{\infty}(\mathcal{F}))}\left|E_{d}h(\mathbb{H}_{N}'\circ\Pi_{\delta})-Eh(\mathbb{H}'\circ\Pi_{\delta})\right|\overset{P*(as*)}{\rightarrow}0\quad\text{ for every }\delta>0.
\end{equation}
To this aim, define for each $\delta>0$ the mapping $A_{\delta}:l^{\infty}(\mathcal{G}_{\delta})\mapsto l^{\infty}(\mathcal{F})$ by $A_{\delta}(z):=z\circ\Pi_{\delta}$ and note that $A_{\delta}$ transforms a function $z\in l^{\infty}(\mathcal{G}_{\delta})$ into a function $z'\in l^{\infty}(\mathcal{F})$ by extending the domain from $\mathcal{G}_{\delta}$ to $\mathcal{F}$: for $f\in\mathcal{G}_{\delta}$ the new function $z'$ remains the same (in fact, $z'(f):=z(\Pi_{\delta}(f))=z(f)$), and the new function $z'$ is constant on each level set of $\Pi_{\delta}$ (since $\mathcal{G}_{\delta}$ is finite there is only a finite number of such level sets and the range of the new function $z'$ must therefore be finite as well). Then, for $h:l^{\infty}(\mathcal{F})\mapsto\mathbb{R}$ and $H$ an arbitrary $\mathcal{F}$-indexed stochastic process it follows that $h(H\circ\Pi_{\delta})=h\circ A_{\delta}(H\restriction\mathcal{G}_{\delta})$. Moreover, if $h\in BL_{1}(l^{\infty}(\mathcal{F}))$, then
\[|h\circ A_{\delta}(z_{1})-h\circ A_{\delta}(z_{2})|\leq\lVert A_{\delta}(z_{1})-A_{\delta}(z_{2})\rVert_{\mathcal{F}}=\lVert z_{1}\circ\Pi_{\delta}-z_{2}\circ\Pi_{\delta}\rVert_{\mathcal{F}}=\lVert z_{1}-z_{2}\rVert_{\mathcal{G}_{\delta}},\]
and the composition $h\circ A_{\delta}$ is therefore a member of $BL_{1}(l^{\infty}(\mathcal{G}_{\delta}))$, i.e. of the set of all functions $g:l^{\infty}(\mathcal{G}_{\delta})\mapsto[0,1]$ such that $|g(z_{1})-g(z_{2})|\leq \lVert z_{1}-z_{2}\rVert_{\mathcal{G}_{\delta}}$ for every $z_{1},z_{2}\in l^{\infty}(\mathcal{G}_{\delta})$. It follows that the supremum on the left side in (\ref{marginal_convergence}) is bounded by
\[\sup_{g\in BL_{1}(l^{\infty}(\mathcal{G}_{\delta}))}\left|E_{d}g(\mathbb{H}_{N}'\restriction\mathcal{G}_{\delta})-Eg(\mathbb{H}'\restriction\mathcal{G}_{\delta})\right|,\]
which, by Theorem \ref{teorema_CWCM}, is measurable and goes to zero in probability (almost surely). This proves (\ref{marginal_convergence}). 

Finally, in order to complete the proof it remains to show that for every $\epsilon>0$ there exists a $\delta>0$ such that
\[\sup_{h\in BL_{1}(l^{\infty}(\mathcal{F}))}|E_{d}h(\mathbb{H}_{N}')-E_{d}h(\mathbb{H}_{N}'\circ \Pi_{\delta})|\leq\epsilon+\widetilde{C}_{N}\quad\text{ for every }N=1,2,\dots\]
for some sequence of random variables $\{\widetilde{C}_{N}\}_{N=1}^{\infty}$ which goes to zero in probability (almost surely). To this aim note that the left side in the last display is bounded by
\[\epsilon/2+P_{d}\left\{\sup_{f,g\in\mathcal{G}:\rho(f,g)<\delta}|\mathbb{H}_{N}'f-\mathbb{H}_{N}'g|>\epsilon/2\right\}\]
and that by (ii) of Theorem \ref{Theorem_CAT_characterizations} there must exists a $\delta>0$ such that
\[P_{d}\left\{\sup_{f,g\in\mathcal{G}:\rho(f,g)<\delta}|\mathbb{H}_{N}'f-\mathbb{H}_{N}'g|>\epsilon/2\right\}\leq \epsilon/2 + \widetilde{C}_{N}\]
for some sequence of non negative random variables $\widetilde{C}_{N}\overset{P(as)}{\rightarrow}0$. The proof of the theorem is now complete.
\end{proof}

\begin{cor}\label{cor_uniformly_continuous_limit_process}
Assume that $\{\mathbb{H}_{N}'\}_{N=1}^{\infty}$ satisfies \hyperlink{CWCM}{CWCM} for some stochastic process $\{\mathbb{H}'f:f\in\mathcal{F}\}$, and assume that there exists a semimetric $\rho$ for which $\mathcal{F}$ is totally bounded and for which $\{\mathbb{H}_{N}'\}_{N=1}^{\infty}$ is \hyperlink{conditional_AEC}{conditionally AEC}. Then, it follows that
\begin{itemize}
\item[(i) ] there exists a version of $\mathbb{H}'$ which is a Borel measurable and tight mapping from some probability space into $l^{\infty}(\mathcal{F})$ such that \hyperlink{opCWC}{opCWC} (\hyperlink{oasCWC}{oasCWC}) as defined in (\ref{CWC_H}) holds;
\item[(ii) ] the sample paths of $\mathbb{H}'$ are uniformly $\rho$-continuous with probability $1$.
\end{itemize}
\end{cor}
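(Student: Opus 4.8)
The plan is to read this corollary as a specialization of Theorem \ref{sufficient_conditions_for_CWC}. That theorem produces a Borel measurable and tight version of $\mathbb{H}'$ satisfying \hyperlink{opCWC}{opCWC} (\hyperlink{oasCWC}{oasCWC}) from the two hypotheses \hyperlink{CAT}{CAT} and \hyperlink{CWCM}{CWCM}. Since \hyperlink{CWCM}{CWCM} is assumed outright in the corollary, the whole task reduces to verifying that the remaining hypotheses of the corollary force the sequence $\{\mathbb{H}_{N}'\}_{N=1}^{\infty}$ to be \hyperlink{CAT}{CAT}; once this is done, conclusion (i) is immediate from Theorem \ref{sufficient_conditions_for_CWC}, and conclusion (ii) will follow from the fact that the version of $\mathbb{H}'$ built in the proof of that theorem has uniformly $\rho$-continuous sample paths.

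To establish \hyperlink{CAT}{CAT} I would invoke the characterization in part (ii) of Theorem \ref{Theorem_CAT_characterizations}. One half of that characterization, namely the existence of a semimetric $\rho$ for which $\mathcal{F}$ is totally bounded and for which the sequence is \hyperlink{conditional_AEC}{conditionally AEC}, is granted verbatim by the hypotheses of the corollary. Hence it remains only to check the marginal \hyperlink{CAT}{CAT} condition (\ref{marginal_CAT}): for every $f\in\mathcal{F}$ and $\eta>0$ there are $M>0$ and a remainder sequence $\widetilde{B}_{N}$ going to zero in probability (almost surely) with $P_{d}\{|\mathbb{H}_{N}'f|\leq M\}+\widetilde{B}_{N}\geq 1-\eta$ for all $N$.

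The marginal condition (\ref{marginal_CAT}) I would obtain from \hyperlink{CWCM}{CWCM} by specializing to a one-point index set. Fix $f$ and apply Theorem \ref{teorema_CWCM} with $\mathcal{G}=\{f\}$: \hyperlink{CWCM}{CWCM} then gives that $\sup_{h\in BL_{1}(l^{\infty}(\{f\}))}|E_{d}h(\mathbb{H}_{N}'f)-Eh(\mathbb{H}'f)|$ tends to zero in probability (almost surely). Because $l^{\infty}(\{f\})$ is just $\mathbb{R}$ and $\mathbb{H}'f$ is a real random variable, the limit is automatically a Borel measurable and tight map, so this is exactly condition (\ref{CWC_H}) for the restricted sequence $\{\mathbb{H}_{N}'\upharpoonright\{f\}\}_{N=1}^{\infty}$ with a Borel and tight limit. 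The necessity theorem stated immediately after Theorem \ref{Theorem_CAT_characterizations} then yields that $\{\mathbb{H}_{N}'f\}_{N=1}^{\infty}$ is \hyperlink{CAT}{CAT}, and \hyperlink{CAT}{CAT} over a one-point index set is precisely (\ref{marginal_CAT}). One could alternatively derive (\ref{marginal_CAT}) by hand from the truncation inequality $P_{d}\{|\mathbb{H}_{N}'f|>2/u\}\leq u^{-1}\int_{-u}^{u}(1-\operatorname{Re}E_{d}e^{it\mathbb{H}_{N}'f})\,dt$, letting $\psi_{N}(t):=E_{d}e^{it\mathbb{H}_{N}'f}$ converge to the continuous-at-zero limit supplied by \hyperlink{CWCM}{CWCM} and using the uniform boundedness of the integrand, but the route through Theorem \ref{teorema_CWCM} is shorter and avoids the measurability bookkeeping around the random integral.

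With (\ref{marginal_CAT}) in hand, the implication (ii)$\Rightarrow$(i) of Theorem \ref{Theorem_CAT_characterizations} gives \hyperlink{CAT}{CAT} relative to the given $\rho$, and Theorem \ref{sufficient_conditions_for_CWC} then delivers conclusion (i). For conclusion (ii) I would observe that the construction carried out inside the proof of Theorem \ref{sufficient_conditions_for_CWC} produces its version of $\mathbb{H}'$ with sample paths uniformly continuous with respect to whatever semimetric is furnished by part (ii) of Theorem \ref{Theorem_CAT_characterizations}; since the $\rho$ handed to us in the corollary already plays that role, that construction may be run with this very $\rho$, and the resulting version has uniformly $\rho$-continuous sample paths with probability $1$. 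The only step needing genuine care is the one in the previous paragraph, that is, recognizing that a one-dimensional marginal limit is automatically Borel measurable and tight so that the necessity theorem applies, and that the remainder sequences emitted by the cited results assemble into the single remainder demanded by (\ref{marginal_CAT}); the rest is assembly of results already proved.
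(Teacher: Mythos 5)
Your proposal is correct and follows essentially the same route as the paper: the paper's own proof simply notes that \hyperlink{CWCM}{CWCM} implies the marginal \hyperlink{CAT}{CAT} condition, combines this with the assumed total boundedness and \hyperlink{conditional_AEC}{conditional AEC} via part (ii) of Theorem \ref{Theorem_CAT_characterizations} to get \hyperlink{CAT}{CAT}, and then invokes the proof of Theorem \ref{sufficient_conditions_for_CWC} for both conclusions. Your only addition is to spell out the ``easy to see'' marginal step (via Theorem \ref{teorema_CWCM} on a singleton index set together with the necessity theorem, or alternatively the characteristic-function truncation inequality), which is a valid filling-in of a detail the paper leaves implicit.
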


\begin{proof}
It is easy to see that \hyperlink{CWCM}{CWCM} implies that the marginals of the sequence $\{\mathbb{H}_{N}'\}_{N=1}^{\infty}$ are \hyperlink{CAT}{CAT} as required by (ii) in the statement of Theorem \ref{Theorem_CAT_characterizations}. Thus, it follows from Theorem \ref{Theorem_CAT_characterizations} that $\{\mathbb{H}_{N}'\}_{N=1}^{\infty}$ is \hyperlink{CAT}{CAT} and the above proof of Theorem \ref{sufficient_conditions_for_CWC} yields the two conclusions of the corollary. 
\end{proof}

\begin{rem}\label{rem_measurable_suprema}
As already pointed out in Section \ref{Section_notation_definitions}, conditional weak convergence is apparently not strong enough to imply asymptotic measurability of $\{\mathbb{H}_{N}'\}_{N=1}^{\infty}$ which is a necessary condition for unconditional weak convergence. However, if for every $\mathcal{G}\subset\mathcal{F}$ the corresponding supremum
\[\sup_{f,g\in\mathcal{G}}|\mathbb{H}_{N}'f-\mathbb{H}_{N}'g|\]
is measurable, then it will be certainly true that conditional weak convergence is stronger than unconditional weak convergence. In fact, if the suprema in the above display are measurable, then it follows that the probabilities on the left sides in (\ref{AEC_general_definition}) and in (\ref{FA_general_definition}) are conditional probabilities in the proper sense and hence that \hyperlink{CAT}{CAT} implies asymptotic tightness in the usual unconditional sense. Since conditional weak convergence implies \hyperlink{CWCM}{CWCM} and since \hyperlink{CWCM}{CWCM} is certainly stronger than unconditional convergence of the marginal distributions, it follows by Theorem 1.5.4 on page 35 in \citep{vdVW} that conditional weak convergence implies unconditional weak convergence in this case.
\end{rem}

\begin{thm}\label{teorema_generale_joint_weak_convergence}
Let $\{\mathbb{H}_{N}\}_{N=1}^{\infty}$ be a sequence of mappings from the probability space (\ref{probability_space}) into $l^{\infty}(\mathcal{F})$ and assume that each $\mathbb{H}_{N}$ depends on the sample points $\omega\in\Omega_{y,x}^{\infty}\times\Omega_{d}$ only through $\mathbf{Y}_{N}$ and $\mathbf{X}_{N}$. Moreover, assume that $\mathbb{H}_{N}\rightsquigarrow\mathbb{H}$ in $l^{\infty}(\mathcal{F})$ with $\mathbb{H}$ a Borel measurable and tight mapping into $l^{\infty}(\mathcal{F})$, and assume that \hyperlink{opCWC}{opCWC} as defined in (\ref{CWC_H}) holds. Then it follows that
\[(\mathbb{H}_{N},\mathbb{H}_{N}')\rightsquigarrow(\mathbb{H},\mathbb{H}')\quad\text{ in }l^{\infty}(\mathcal{F})\times l^{\infty}(\mathcal{F})\]
with $\mathbb{H}$ and $\mathbb{H}'$ independent.
\end{thm}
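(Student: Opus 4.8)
The plan is to establish the unconditional weak convergence through the bounded-Lipschitz characterization. Since the independent coupling $(\mathbb{H},\mathbb{H}')$ is Borel measurable and tight in $l^{\infty}(\mathcal{F})\times l^{\infty}(\mathcal{F})$ (equipped, say, with the norm $\lVert(z,w)\rVert:=\lVert z\rVert_{\mathcal{F}}+\lVert w\rVert_{\mathcal{F}}$), it suffices to show
\[\sup_{h\in BL_{1}(l^{\infty}(\mathcal{F})\times l^{\infty}(\mathcal{F}))}\bigl|E^{*}h(\mathbb{H}_{N},\mathbb{H}_{N}')-Eh(\mathbb{H},\mathbb{H}')\bigr|\rightarrow 0.\]
The decisive structural fact is that $\mathbb{H}_{N}$ depends on $\omega$ only through $\mathbf{Y}_{N},\mathbf{X}_{N}$, so under the design expectation $E_{d}$ (which keeps the population fixed and is, for a measurable design, a finite sum over the $2^{N}$ samples $\mathbf{s}_{N}$) it is frozen. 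Writing $h_{z}(w):=h(z,w)$, one thus has, for each realized population, $E_{d}h(\mathbb{H}_{N},\mathbb{H}_{N}')=E_{d}h_{\mathbb{H}_{N}}(\mathbb{H}_{N}')$. Since $z\mapsto h_{z}$ sends the product $BL_{1}$ into $BL_{1}(l^{\infty}(\mathcal{F}))$, and since the barycentre $\bar h(z):=\int h(z,w')\,dP_{\mathbb{H}'}(w')$ likewise lies in $BL_{1}(l^{\infty}(\mathcal{F}))$ with $\bar h(\mathbb{H}_{N})=Eh_{\mathbb{H}_{N}}(\mathbb{H}')$, \hyperlink{opCWC}{opCWC} gives the pointwise-in-population bound
\[\bigl|E_{d}h(\mathbb{H}_{N},\mathbb{H}_{N}')-\bar h(\mathbb{H}_{N})\bigr|\le R_{N}:=\sup_{g\in BL_{1}(l^{\infty}(\mathcal{F}))}\bigl|E_{d}g(\mathbb{H}_{N}')-Eg(\mathbb{H}')\bigr|\overset{P^{*}}{\rightarrow}0.\]
Crucially $R_{N}$ does not depend on $h$, so this bound is uniform over $h$.

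The next step is to integrate out the population. For each fixed population the map $\omega_{d}\mapsto h(\mathbb{H}_{N},\mathbb{H}_{N}')$ takes finitely many values on the measurable sets $\{\mathbf{S}_{N}=\mathbf{s}_{N}\}$, hence is properly $\mathcal{A}_{d}$-measurable and its design-outer-integral coincides with $E_{d}h(\mathbb{H}_{N},\mathbb{H}_{N}')$. The Fubini-type inequality for outer expectations (see Chapter 1.2 in \cite{vdVW}) then yields
\[E^{*}h(\mathbb{H}_{N},\mathbb{H}_{N}')\le E^{*}\bigl[E_{d}h(\mathbb{H}_{N},\mathbb{H}_{N}')\bigr]\le E^{*}\bar h(\mathbb{H}_{N})+E^{*}R_{N},\]
together with the matching lower bound for $E_{*}$. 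Because $0\le R_{N}\le 2$ and $R_{N}\overset{P^{*}}{\rightarrow}0$ we have $E^{*}R_{N}\rightarrow 0$, while $\bar h$ ranges over all of $BL_{1}(l^{\infty}(\mathcal{F}))$ as $h$ ranges over the product $BL_{1}$; hence the hypothesis $\mathbb{H}_{N}\rightsquigarrow\mathbb{H}$ and its bounded-Lipschitz characterization give $\sup_{h}|E^{*}\bar h(\mathbb{H}_{N})-E\bar h(\mathbb{H})|\rightarrow 0$, and likewise for $E_{*}$. Since $E\bar h(\mathbb{H})=Eh(\mathbb{H},\mathbb{H}')$ under independence, sandwiching $E_{*}\le E^{*}$ between these bounds yields the displayed supremum convergence, which is the assertion.

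The only genuinely delicate point is this second step, the passage from the pointwise (in the population) inequality to the unconditional outer expectation; everything else is a Slutsky-type argument. It goes through precisely because the non-measurability inherent to $l^{\infty}(\mathcal{F})$ lives entirely in the population direction: conditionally on $\mathbf{Y}_{N},\mathbf{X}_{N}$ the randomness is carried by the single variable $D$, so $h(\mathbb{H}_{N},\mathbb{H}_{N}')$ is a simple, hence measurable, function of $\omega_{d}$, $E_{d}$ is a bona fide integral, and the Fubini inequality applies cleanly. An alternative route would verify joint asymptotic tightness, which by the product structure of tightness reduces to the available unconditional tightness of $\{\mathbb{H}_{N}\}$ and the tightness of $\{\mathbb{H}_{N}'\}$ extracted from \hyperlink{CAT}{CAT}, together with joint convergence of the finite-dimensional marginals; the latter follows from a characteristic-function computation $E\exp(is^{\intercal}\mathbb{H}_{N}\mathbf{f}+it^{\intercal}\mathbb{H}_{N}'\mathbf{g})=E[\exp(is^{\intercal}\mathbb{H}_{N}\mathbf{f})\,E_{d}\exp(it^{\intercal}\mathbb{H}_{N}'\mathbf{g})]\to Eg_{1}\cdot Eg_{2}$, using \hyperlink{CWCM}{CWCM} and $\mathbb{H}_{N}\mathbf{f}\rightsquigarrow\mathbb{H}\mathbf{f}$, which exhibits the independent product limit directly.
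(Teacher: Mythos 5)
Your overall architecture --- the bounded-Lipschitz reduction, freezing $\mathbb{H}_{N}$ under $E_{d}$, the barycentre $\bar h(z):=Eh(z,\mathbb{H}')$, and the uniform bound by $R_{N}$ --- is the standard route, and the pointwise-in-population inequality $|E_{d}h(\mathbb{H}_{N},\mathbb{H}_{N}')-\bar h(\mathbb{H}_{N})|\le R_{N}$ is correct (the paper itself only delegates the proof to Corollary 3.2 of the companion manuscript, so no line-by-line comparison is possible). The genuine gap is in the step you yourself flag as delicate: the inequality $E^{*}h(\mathbb{H}_{N},\mathbb{H}_{N}')\le E^{*}\bigl[E_{d}h(\mathbb{H}_{N},\mathbb{H}_{N}')\bigr]$ is the \emph{reverse} of the Fubini-type inequality for outer expectations. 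That inequality sandwiches repeated integrals \emph{between} inner and outer expectation, i.e. $E_{*}T\le E_{1*}E_{2*}T$ and $E_{1}^{*}E_{2}^{*}T\le E^{*}T$; sectionwise measurability in $\omega_{d}$ therefore only yields $E^{*}h\ge E^{*}[E_{d}h]$ and $E_{*}h\le E_{*}[E_{d}h]$. The direction you need can genuinely fail for maps of exactly this form $T=\sum_{\mathbf{s}_{N}}g_{\mathbf{s}_{N}}(\omega_{1})\,1\{\mathbf{S}_{N}=\mathbf{s}_{N}\}$, because the coefficients $g_{\mathbf{s}_{N}}(\omega_{1})=h(\mathbb{H}_{N},\mathbb{H}_{N}'(\cdot\,;\mathbf{s}_{N}))$ need not be measurable in the population coordinate: take $\Omega_{1}=[0,1]$ with Lebesgue measure, $\Omega_{2}=\{0,1\}$ uniform, and $T=1_{B}(\omega_{1})1\{\omega_{2}=0\}+1_{B^{c}}(\omega_{1})1\{\omega_{2}=1\}$ with $B$ of outer measure one and inner measure zero; then $E_{2}T\equiv 1/2$ so $E^{*}[E_{2}T]=1/2$, while $E^{*}T=1$.

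What the correctly oriented Fubini inequality buys you is only $\liminf_{N}E^{*}h\ge Eh(\mathbb{H},\mathbb{H}')$ and $\limsup_{N}E_{*}h\le Eh(\mathbb{H},\mathbb{H}')$. The missing half, $\limsup_{N}E^{*}h\le Eh(\mathbb{H},\mathbb{H}')$, is precisely the asymptotic measurability of the pair, and the paper explicitly warns (end of Section \ref{Section_notation_definitions} and Remark \ref{rem_measurable_suprema}) that conditional weak convergence does not deliver this for free; in the paper's applications it is supplied by condition \hyperlink{PM}{PM}, which makes the relevant suprema measurable and hence makes $E_{d}h$ a genuine conditional expectation dominating a measurable version of $h(\mathbb{H}_{N},\mathbb{H}_{N}')$. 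Without some such additional input your argument does not close. Your alternative route has the same defect at the tightness step: passing from \hyperlink{CAT}{CAT} to unconditional asymptotic tightness of $\{\mathbb{H}_{N}'\}$ again requires the events $\{\mathbb{H}_{N}'\in K^{\delta}\}$ (or the suprema in Theorem \ref{Theorem_CAT_characterizations}) to be measurable; the characteristic-function computation for the joint finite-dimensional marginals, by contrast, is sound.
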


\begin{proof}
The proof is the same as the proof of Corollary 3.2 in \citep{Pasquazzi_2019}.
\end{proof}

\section{Conditional Poisson sampling (or rejective sampling)}\label{rejective_sampling_review}

This section reviews some theoretical results about rejective sampling. The basic theory for this sampling design was developed by \cite{Hajek_1964}. Some of the results contained in his paper will be needed in the next section. The relevant ones will be singled out in what follows. 

Recall that a rejective sampling design is a conditional Poisson sampling design where the final sample is rejected unless its size equals a given natural number $n$. Equivalently, rejective sampling can also be defined as random sampling with replacement of a fixed number $n$ of units according to specified selection probabilities where the final sample is rejected and the sampling procedure is repeated until a sample of $n$ different units is obtained. Rejective sampling is of great interest to researchers and practitioners because it provides largest possible entropy subject to the constraints of a fixed sample size and given first order sample inclusion probabilities (see Theorem 3.4 in H{\'a}jek, 1981). By the definition of rejective sampling as conditional Poisson sampling it follows that the pdf of the vector of sample inclusion indicators is given by
\begin{equation}\label{sample_selection_probs_CPS}
\mathfrak{p}_{N}^{R}(\mathbf{s}_{N};\mathbf{p}_{N}):=\begin{cases}
\frac{\prod_{i=1}^{N}p_{i,N}^{s_{i}}(1-p_{i,N})^{1-s_{i}}}{\sum_{\mathbf{s}_{N}'\in\Omega_{N,n}}\prod_{i=1}^{N}p_{i,N}^{s_{i}'}(1-p_{i,N})^{1-s_{i}'}} & \text{ if }\mathbf{s}_{N}\in\Omega_{N,n},\\
0 & \text{otherwise},
\end{cases}
\end{equation}
where $\mathbf{p}_{N}:=(p_{1,N}, p_{2,N},\dots, p_{N,N})\in(0,1)^{N}$ is the vector of first order sample inclusion probabilities of the underlying Poisson sampling design, and where $\Omega_{N,n}:=\{\mathbf{s}_{N}\in\{0,1\}^{N}: \sum_{i=1}^{N}s_{i}=n\}$ is the set of all possible realizations of the vector of sample inclusion indicators $\mathbf{S}_{N}$ that give rise to samples of size $n$. Of course the definition rejective sampling and hence of $\mathfrak{p}_{N}^{R}(\mathbf{s}_{N};\mathbf{p}_{N})$ can also be extended to the case where some or even all of the $p_{i,N}$'s are $0$ or $1$. However, if $p_{i,N}=0$ for some $i=1,2,\dots, N$, then the corresponding population unit $i$ will be excluded from the sample with probability $1$ and unbiased estimation of population characteristics is hence be impossible. Since the Horvitz-Thompson estimator is not well-defined in this case, we shall henceforth consider only rejective sampling designs such that $p_{i,N}>0$ for every $i=1,2,\dots, N$. On the other hand, if $p_{i,N}=1$ for some $i=1,2,\dots, N$, then the corresponding population unit $i$ will be included in the sample with probability $1$ and rejective sampling with sample size $n$ from a population of size $N$ will be equivalent to rejective sampling with sample size $n-1$ from a population of size $N-1$. In this case the pdf of the vector of sample inclusion indicators will still be defined as in  (\ref{sample_selection_probs_CPS}) provided that $n$ is at least as large as the number of population units for which $p_{i,N}=1$. For smaller values of $n$ a corresponding rejective sampling design does obviously not exist.

Note that in general there are infinitely many underlying Poisson sampling designs which give rise to the same rejective sampling design. In fact, if the vector of first order inclusion probabilities corresponding to the underlying Poisson sampling design is changed to $\mathbf{p}_{N}':=(p_{1,N}', p_{2,N}', \dots, p_{N,N}')$ in such way that for some fixed constant $c>0$
\[\frac{p_{i,N}}{1-p_{i,N}}=\frac{cp_{i,N}'}{1-p_{i,N}'}\quad\text{ for every }i=1,2,\dots,N,\]
then $\mathfrak{p}_{N}^{R}(\mathbf{s}_{N};\mathbf{p}_{N})=\mathfrak{p}_{N}^{R}(\mathbf{s}_{N};\mathbf{p}_{N}')$ for every sample $\mathbf{s}_{N}\in\{0,1\}^{N}$. The underlying Poisson sampling design is called \textit{canonical} if its first order sample inclusion probabilities are chosen so that $\sum_{i=1}^{N}p_{i,N}=n$, i.e. so that the expected sample size of the underlying Poisson sampling design equals the fixed sample size of the rejective sampling plan. Of course, the first order sample inclusion probabilities corresponding to a rejective sampling design are in general different from those corresponding to any of the underlying Poisson sampling designs. However, H{\'a}yek (1964) showed that, in some asymptotic sense, they are uniformly close to those corresponding to the underlying \textit{canonical} Poisson sampling design (see Theorem 5.1 in H{\'a}yek's paper). In fact, H{\'a}yek proved the following result:

\begin{result}\label{Hajek_result_1}
Let $\mathbf{\underline{\pi}}_{N}:=(\pi_{1,N},\pi_{2,N}, \dots,\pi_{N,N})\in(0,1)^{N}$ be the vector of first order sample inclusion probabilities for a rejective sampling design and let $\mathbf{p}_{N}:=(p_{1,N}, p_{2,N}, \dots, p_{N,N})$ be the vector of first order sample inclusion probabilities of the corresponding canonical Poisson sampling design. Then it follows that
\begin{itemize}
\item[(i) ]
\[d_{N}:=\sum_{i=1}^{N}p_{i,N}(1-p_{i,N})\rightarrow \infty\quad\text{ if and only if }\quad d_{0,N}:=\sum_{i=1}^{N}\pi_{i,N}(1-\pi_{i,N})\rightarrow \infty\]
(note that $d_{N}$ is the variance of the random sample size corresponding to the canonical Poisson sampling design);
\item[(ii) ]
\[\max_{1\leq i\leq N}\left|\frac{\pi_{i,N}}{p_{i,N}}-1\right|\rightarrow 0\quad\text{ if } d_{N}\rightarrow \infty;\]
\item[(iii) ] 
\[\max_{1\leq i\leq N}\left|\frac{1-\pi_{i,N}}{1-p_{i,N}}-1\right|\rightarrow 0\quad\text{ if } d_{N}\rightarrow \infty.\]
\end{itemize}
\end{result}

Actually, H{\'a}yek did not use the double subscript notation to indicate the $p_{i,N}$'s and the $\pi_{i,N}$'s. However, it is easily checked that all the proofs given in his paper are actually meant for sequences of rejective sampling designs and corresponding canonical Poisson sampling designs where all the first order sample inclusion probabilities can be redefined as the population size $N$ increases. With respect to Result \ref{Hajek_result_1} it is worth noting that for every properly scaled vector of first order sample inclusion probabilities $\mathbf{\underline{\pi}}_{N}\in(0,1)^{N}$ there exists a corresponding rejective sampling design. In other words, for every $\mathbf{\underline{\pi}}_{N}\in(0,1)^{N}$ such that $\sum_{i=1}^{N}\pi_{i,N}=n$ for some $n=0, 1,2,\dots, N$ there exists a corresponding vector $\mathbf{p}_{N}\in(0,1)^{N}$ such that 
\[\pi_{i,N}=\sum_{\mathbf{s}_{N}\in\Omega_{N,n}:s_{i}=1}\mathfrak{p}_{N}^{R}(\mathbf{s}_{N};\mathbf{p}_{N})\quad\text{ for every }i=1,2,\dots, N.\]
This result has been shown by \cite{Dupacova_1979} but it can also be viewed as a consequence of a well-known theorem about exponential families (see Theorem 5 on page 67 in \cite{Tille_2006}). Fast algorithms to recover the vector $\mathbf{p}_{N}$ corresponding to a given vector $\mathbf{\underline{\pi}}_{N}$ and viceversa can be found in \cite{Chen_1994}.

\smallskip

%%%%%%%%%%%%%%%%%%%%%%%%%%%%%%%%%%%
%%%%%%%%%%%%%%%%%%%%%%%%%%%%%%%%%%%
%%%%%% RIVISTO FIN QUI 18 MARZO 2019 %%%%%%%%%%%
%%%%%%%%%%%%%%%%%%%%%%%%%%%%%%%%%%%
%%%%%%%%%%%%%%%%%%%%%%%%%%%%%%%%%%%

Having shown Result \ref{Hajek_result_1} and some similar approximation results (also for second order sample inclusion probabilities), \cite{Hajek_1964} moves on to show asymptotic normality for the sequence of Horvitz-Thompson estimators. To this aim, he introduces a new sampling design, call it $P_{0}$, which approximates the canonical Poisson sampling design associated to the rejective sampling design of interest. The sampling design $P_{0}$ can be implemented in three steps according to the following procedure. At the first step a sample of size $n$ is selected using the rejective sampling design of interest. As before, let $\mathbf{p}_{N}$ denote the vector of first order sample inclusion probabilities of the corresponding canonical Poisson sampling design. Then, independently from the outcome of the first step, a new random experiment is performed to ascertain the sample size $K$ of a Poisson sampling design with sample inclusion probabilities given by $\mathbf{p}_{N}$. If $K=n$, then the final sample according to $P_{0}$ will be the rejective sample obtained at the first step. However, if $K>n$, rejective sampling is used again to select a sample of size $K-n$ from the population units that were not included in the first rejective sample, and this new sample is added to the sample obtained at the first step in order to obtain the final sample according to $P_{0}$. The first order sample inclusion probabilities of the canonical Poisson sampling design that underlies the second rejective sampling plan are proportional to the $p_{i,N}$-values of the population units that are not included in the first rejective sample. On the other hand, if $K<n$, rejective sampling is used again to select a sample of size $n-K$ from the population units that were already included in the rejective sample obtained at the first step, and the final sample according to $P_{0}$ is obtained by removing the units that are included in the second rejective sample from those which were already included in the first one. In this case, the first order sample inclusion probabilities of the underlying canonical Poisson sampling design will be proportional to the values of $1-p_{i,N}$ corresponding to the population units that were already included in the first rejective sample. 

Now, to give a formal statement of the sense in which the sample design $P_{0}$ provides an approximation to the canonical Poisson sampling design which underlies the rejective sample plan of interest, it will be convenient to introduce some notation. So, let $\mathbf{S}_{N}^{R}:=(S_{1,N}^{R}, S_{2,N}^{R}, \dots, S_{N,N}^{R})$ be the vector of sample inclusion indicators that describe the outcome of the rejective sampling design that is used at the first step of the experiment (i.e. the rejective sampling design of interest), and let $\mathbf{S}_{N}^{P_{0}}:=(S_{1,N}^{P_{0}}, S_{2,N}^{P_{0}}, \dots, S_{N,N}^{P_{0}})$ denote the vector of sample inclusion indicators that identify the final sample according to $P_{0}$. Moreover, denote their joint pdf by $\mathfrak{p}_{N}^{R,P_{0}}(\cdot,\cdot;\mathbf{p}_{N}):\{0,1\}^{N}\times\{0,1\}^{N}\mapsto[0,1]$, and the marginal pdfs corresponding to $\mathbf{S}_{N}^{R}$ and $\mathbf{S}_{N}^{P_{0}}$ by $\mathfrak{p}_{N}^{R}(\cdot;\mathbf{p}_{N})$ and $\mathfrak{p}_{N}^{P_{0}}(\cdot;\mathbf{p}_{N})$, respectively. Finally, let $\mathfrak{p}_{N}^{P}(\cdot;\mathbf{p}_{N})$ be the pdf of the vector of sample inclusion indicators corresponding to the Poisson sampling design with first order inclusion probabilities given by the components of $\mathbf{p}_{N}$ (i.e. the canonical Poisson sampling design which underlies the rejective sampling design of interest). With this notation, the approximation result contained in Lemma 4.3 of H{\'a}jek's paper can now be stated as follows: 

\begin{result}\label{Hajek_result_2}
The total variation distance 
\[d_{T}(\mathfrak{p}_{N}^{P_{0}}(\cdot;\mathbf{p}_{N}), \mathfrak{p}_{N}^{P}(\cdot;\mathbf{p}_{N})):=\sum_{\mathbf{s}_{N}\in\{0,1\}^{N}}|\mathfrak{p}_{N}^{P_{0}}(\mathbf{s}_{N};\mathbf{p}_{N})-\mathfrak{p}_{N}^{P}(\mathbf{s}_{N};\mathbf{p}_{N})|\]
converges to zero as $d_{N}\rightarrow\infty$.
\end{result}

Based on Result \ref{Hajek_result_1} and Result \ref{Hajek_result_2}, \citet{Hajek_1964} proved asymptotic normality of the Horvitz-Thompson estimators corresponding to a sequence of rejective sampling designs as follows. First, he considered the sequence of underlying canonical Poisson sampling designs and showed asymptotic normality for a corresponding sequence of auxiliary statistics $\{T_{N}\}_{N=1}^{\infty}$. As pointed out by \cite{Conti_2014} and by \cite{Bertail_2017}, the auxiliary statistic $T_{N}$ can be viewed as the residual of the projection of the Horvitz-Thompson estimator corresponding to the canonical Poisson sampling design on the random sample size from the latter design. Thus, if the goal is to estimate $\overline{f}:=\sum_{i=1}^{N}f(Y_{i})/N$ for some given real-valued function $f$, then the corresponding auxiliary statistic $T_{N}$ can be written as
\begin{equation*}
\begin{split}
T_{N}(f;\mathbf{S}_{N}^{P})&:=\frac{1}{N}\sum_{i=1}^{N}\frac{S_{i,N}^{P}}{p_{i}}f(Y_{i})-\frac{R_{N}(f)}{N}\sum_{i=1}^{N}(S_{i,N}^{P}-p_{i})\\
&:=Y_{N}(f;\mathbf{S}_{N}^{P})-\frac{R(f)}{N}\sum_{i=1}^{N}(S_{i,N}^{P}-p_{i})
\end{split}
\end{equation*}
where $\mathbf{S}_{N}^{P}:=(S_{1,N}^{P}, S_{2,N}^{P}, \dots, S_{N,N}^{P})$ is the vector of sample inclusion indicators for the canonical Poisson sampling design, and where
\begin{equation*}\label{centering_constant}
\begin{split}
R_{N}(f)&:=\frac{Cov\left(Y_{N}(f;\mathbf{S}_{N}^{P}), \sum_{i=1}^{N}S_{i,N}^{P}\right)}{Var\left(\sum_{i=1}^{N}S_{i,N}^{P}\right)}\\
&=\begin{cases}
\frac{1}{d_{N}}\sum_{i=1}^{N}f(Y_{i})(1-p_{i,N}) &\text{if }d_{N}>0\\
0 & \text{otherwise.}
\end{cases}
\end{split}
\end{equation*}
Note that the design expectation of $T_{N}(f;\mathbf{S}_{N}^{P})$ coincides with $\overline{f}$, and that the design variance of $T_{N}(f;\mathbf{S}_{N}^{P})$ must be smaller than that of $Y_{N}(f;\mathbf{S}_{N}^{P})$ unless $R_{N}(f)=0$. However, $T_{N}(f;\mathbf{S}_{N}^{P})$ is not an estimator because it depends on the unknown value of $R_{N}(f)$. At this point, having proved asymptotic normality for the sequence $\{T_{N}(f;\mathbf{S}_{N}^{P})\}_{N=1}^{\infty}$, H\'{a}jek deduces asymptotic normality for the corresponding sequence $\{T_{N}(f;\mathbf{S}_{N}^{P_{0}})\}_{N=1}^{\infty}$ by using Result \ref{Hajek_result_2}. From this he gets asymptotic normality for the sequence $\{Y_{N}(f;\mathbf{S}_{N}^{R})\}_{N=1}^{\infty}$ by proving the following result (note that $Y_{N}(f;\mathbf{S}_{N}^{R})=T_{N}(f;\mathbf{S}_{N}^{R})$ because $\sum_{i=1}^{N}S_{i,N}^{R}=n=\sum_{i=1}^{N}p_{i,N}$):

\begin{result}\label{Hajek_result_3}
Let 
\[B_{N}^{2}(f):=Var(T_{N}(f;\mathbf{S}_{N}^{P}))=\frac{1}{N}\sum_{i=1}^{N}\frac{1-p_{i,N}}{p_{i,N}}\left[f(y_{i})-R_{N}(f)p_{i,N}\right]^{2}.\]
Then, 
\[B_{N}^{-2}(f) E|T_{N}(f;\mathbf{S}_{N}^{P_{0}})-Y_{N}(f;\mathbf{S}_{N}^{R})|^{2}\rightarrow 0\quad\text{ if }d_{N}\rightarrow\infty,\]
where the expectation refers to $\mathfrak{p}_{N}^{R,P_{0}}(\cdot,\cdot;\mathbf{p}_{N})$.
\end{result}

Actually, in his paper \cite{Hajek_1964}, H\'{a}jek did non single out Result \ref{Hajek_result_3} in a dedicated lemma or theorem, but he proved it in the course of the proof of his Theorem 7.1 which establishes asymptotic normality for the sequence $\{Y_{N}(f;\mathbf{S}_{N}^{R})\}_{N=1}^{\infty}$. Note that the statement of the latter theorem considers actually only the case where the $p_{i,N}$'s are proportional to some size variable, but that the proof of Result \ref{Hajek_result_3} given in H\'{a}jek's paper goes through for any sequence of vectors $\{\mathbf{p}_{N}\}_{N=1}^{\infty}$ such that $d_{N}\rightarrow\infty$. From Result \ref{Hajek_result_3} one can finally deduce asymptotic normality for the Horvitz-Thompson estimators corresponding to a sequence of rejective sampling designs by using Result \ref{Hajek_result_1}.

\section{Weak convergence theorems for CPS designs}\label{main_results}

In this section the functional central limit theorems for the rejective sampling case given in \cite{Bertail_2017} will be proven again in somewhat greater generality. Since this requires some assumptions which involve the marginal distribution of the $Y_{i}$ component in $(Y_{i},X_{i})$, it will be convenient to denote the latter distribution by $P_{y}$. As usual in the empirical process literature, the symbol $P_{y}$ will also be used to indicate an operator on the function class $\mathcal{F}$ or on related function classes. For example, $P_{y}$ will also be used to indicate the real-valued function $f\in\mathcal{F}\mapsto\int f(y)dP_{y}(y)$.

The first result of this section settles a measurability issue. It will be used in the rest of this paper without explicitly mentioning it.

\begin{lem}
For $\mathbf{s}_{N}\in\{0,1\}^{N}$ let $\mathfrak{p}_{N}^{R}(\mathbf{s}_{N};\mathbf{X}_{N})$ be the sample selection probabilities for a CPS sampling design, and let $\mathfrak{p}_{N}^{P}(\mathbf{s}_{N};\mathbf{X}_{N})$ be the sample selection probabilities for the corresponding canonical Poisson sampling design. Moreover, let $\underline{\mathbf{\pi}}_{N}$ be the vector of first order sample inclusion probabilities for the CPS design and let $\mathbf{p}_{N}$ be the vector of first order sample inclusion probabilities for the canonical Poisson sampling design. The following statements are equivalent:

\begin{itemize}
\item[(i) ] The CPS design is measurable, i.e. for every $\mathbf{s}_{N}\in\{0,1\}^{N}$ the corresponding function $\mathbf{X}_{N}\mapsto\mathfrak{p}_{N}^{R}(\mathbf{s}_{N};\mathbf{X}_{N})$ is a measurable function from $(\mathcal{X},\mathcal{A})^{N}$ into $\mathbb{R}$;
\item[(ii) ] the vector $\underline{\mathbf{\pi}}_{N}$ is measurable, i.e. the function $\mathbf{X}_{N}\mapsto\underline{\mathbf{\pi}}_{N}$ is a measurable function from $(\mathcal{X},\mathcal{A})^{N}$ into $\mathbb{R}^{N}$;
\item[(iii) ] the vector $\mathbf{p}_{N}$ is measurable, i.e. the function $\mathbf{X}_{N}\mapsto\mathbf{p}_{N}$ is a measurable function from $(\mathcal{X},\mathcal{A})^{N}$ into $\mathbb{R}^{N}$; 
\item[(iv) ] The canonical Poisson sampling design is measurable, i.e. for every $\mathbf{s}_{N}\in\{0,1\}^{N}$ the corresponding function $\mathbf{X}_{N}\mapsto\mathfrak{p}_{N}^{P}(\mathbf{s}_{N};\mathbf{X}_{N})$ is a measurable function from $(\mathcal{X},\mathcal{A})^{N}$ into $\mathbb{R}$;
\end{itemize}
\end{lem}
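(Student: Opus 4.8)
The plan is to prove the four statements equivalent by establishing $(iii)\Leftrightarrow(iv)$, the cycle $(iii)\Rightarrow(i)\Rightarrow(ii)\Rightarrow(iii)$, and observing that together these force $(i)\Leftrightarrow(ii)\Leftrightarrow(iii)\Leftrightarrow(iv)$. All the arrows except $(ii)\Rightarrow(iii)$ are routine and rest on two elementary facts: (a) a finite sum of measurable functions is measurable, and (b) the composition of a (Borel) continuous function with a measurable function is measurable. I would single out $(ii)\Rightarrow(iii)$ as the one substantive step.

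For the routine arrows I would argue as follows. Implication $(iii)\Rightarrow(iv)$ holds by (b), since each Poisson selection probability $\mathfrak{p}_N^P(\mathbf{s}_N;\mathbf{p}_N)=\prod_{i=1}^N p_{i,N}^{s_i}(1-p_{i,N})^{1-s_i}$ is a polynomial, hence continuous, in the components of $\mathbf{p}_N$; conversely $(iv)\Rightarrow(iii)$ holds by (a), because $p_{i,N}=\sum_{\mathbf{s}_N:s_i=1}\mathfrak{p}_N^P(\mathbf{s}_N;\mathbf{X}_N)$ is the first order Poisson inclusion probability. In the same spirit, $(iii)\Rightarrow(i)$ holds because for $\mathbf{s}_N\in\Omega_{N,n}$ the rejective probability in (\ref{sample_selection_probs_CPS}) is a ratio of such polynomials whose denominator $\sum_{\mathbf{s}_N'\in\Omega_{N,n}}\prod_{i=1}^N p_{i,N}^{s_i'}(1-p_{i,N})^{1-s_i'}$ is strictly positive on $(0,1)^N$ (the index set $\Omega_{N,n}$ being non-empty), hence continuous in $\mathbf{p}_N$, while for $\mathbf{s}_N\notin\Omega_{N,n}$ it equals the constant $0$; and $(i)\Rightarrow(ii)$ holds by (a) since $\pi_{i,N}=\sum_{\mathbf{s}_N\in\Omega_{N,n}:\,s_i=1}\mathfrak{p}_N^R(\mathbf{s}_N;\mathbf{X}_N)$.

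The crux is $(ii)\Rightarrow(iii)$, i.e. recovering measurability of the canonical vector $\mathbf{p}_N$ from that of $\underline{\mathbf{\pi}}_N$. Here I would introduce the map $\Phi$ carrying a canonical Poisson vector to the associated vector of rejective first order inclusion probabilities, $\Phi(\mathbf{p}):=\bigl(\sum_{\mathbf{s}\in\Omega_{N,n}:\,s_i=1}\mathfrak{p}_N^R(\mathbf{s};\mathbf{p})\bigr)_{i=1}^{N}$. By the computation just used, $\Phi$ is continuous, and both its arguments and its values are confined to the affine hyperplane $\{x\in\mathbb{R}^N:\sum_i x_i=n\}$ (every rejective sample has exactly $n$ units, so $\sum_i\pi_{i,N}=n$, and $\mathbf{p}_N$ is canonical). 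The result of \cite{Dupacova_1979} (see also Theorem 5 on page 67 in \cite{Tille_2006}) guarantees that $\Phi$ is a bijection of $U:=\{\mathbf{p}\in(0,1)^N:\sum_i p_i=n\}$ onto itself, so it is a continuous bijection between open subsets of an $(N-1)$-dimensional affine space.

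The main obstacle is thus to pass from bijectivity to measurability of $\Phi^{-1}$. My plan is to invoke Brouwer's invariance of domain: after identifying the hyperplane with $\mathbb{R}^{N-1}$ (for instance by eliminating the last coordinate via $p_N=n-\sum_{i<N}p_i$), $\Phi$ becomes a continuous injection from an open subset of $\mathbb{R}^{N-1}$ into $\mathbb{R}^{N-1}$, hence an open map and a homeomorphism onto its image, so $\Phi^{-1}$ is continuous and in particular Borel measurable. (Alternatively, invariance of domain can be bypassed and Borel measurability of $\Phi^{-1}$ obtained directly from the Lusin--Souslin theorem, since a continuous injection between Polish spaces has a Borel measurable inverse on its image.) Once $\Phi^{-1}$ is measurable, $(ii)\Rightarrow(iii)$ follows from (b), because $\mathbf{p}_N=\Phi^{-1}(\underline{\mathbf{\pi}}_N)$ is then the composition of the measurable map $\Phi^{-1}$ with the measurable map $\mathbf{X}_N\mapsto\underline{\mathbf{\pi}}_N$. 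This closes the cycle and completes the proof.
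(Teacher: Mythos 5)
Your proposal is correct and follows essentially the same route as the paper: the paper likewise dismisses every arrow except (ii)$\Rightarrow$(iii) as easy and attributes (ii)$\Rightarrow$(iii) to Theorem 5 on page 67 of Till\'e (2006), i.e.\ the exponential-family bijection between $\underline{\mathbf{\pi}}_{N}$ and the canonical $\mathbf{p}_{N}$ (your cycle is decomposed slightly differently, but that is immaterial). The one point where you go beyond the paper is in supplying the step it leaves implicit --- passing from continuity and bijectivity of $\Phi$ on the open slice of the hyperplane $\{\sum_i p_i=n\}$ to Borel measurability of $\Phi^{-1}$ via invariance of domain (or Lusin--Souslin) --- which is a legitimate and welcome completion of the cited argument.
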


\begin{proof}
The proofs of the implications (i)$\Rightarrow$(ii), (iii)$\Rightarrow$(iv) and (iv)$\Rightarrow$(i) are easy and the implication (ii)$\Rightarrow$(iii) follows from Theorem 5 on page 67 in  \cite{Tille_2006} which is a special case of a well-known result about exponential families (see for example \cite{Brown_1986})
\end{proof}

The next lemma establishes conditional convergence of the marginal distributions of the sequence of \hyperlink{HTEP}{HTEP}s. %In order to avoid repetitions, its assumptions and its conclusion contain a probability limit version and an almost sure version. This way of stating assumptions and conclusions will also be used in the rest of this paper. In every lemma, theorem and corollary presented in this paper, the probability limit versions of the assumptions will always imply the probability limit versions of the conclusions, while the almost sure versions of the assumptions will imply both versions of the conclusions.

\begin{lem}[\hyperlink{CWCM}{CWCM}]\label{lem_marginal_convergence_Poisson_process_C}
Let $\{\mathbf{S}_{N}^{R}\}_{N=1}^{\infty}$ be the sequence of vectors of sample inclusion indicators corresponding to a sequence of measurable CPS designs and let $\{\mathbf{p}_{N}\}_{N=1}^{\infty}$ be the sequence of vectors of first order sample inclusion probabilities of the corresponding sequence of canonical Poisson sampling designs. Let $\mathcal{F}$ be a class of measurable functions $f:\mathcal{Y}\mapsto\mathbb{R}$ and let $\{\mathbb{G}_{N}'\}_{N=1}^{\infty}:=\{\{\mathbb{G}_{N}'f: f\in\mathcal{F}\}\}_{N=1}^{\infty}$ be the sequence \hyperlink{HTEP}{HTEP}s corresponding to $\{\mathbf{S}_{N}^{R}\}_{N=1}^{\infty}$ and $\mathcal{F}$. 

Assume that:
\begin{itemize}
\item[\hypertarget{A0}{\textbf{A0}}) ] the sequence $\{\mathbf{p}_{N}\}_{N=1}^{\infty}$ is such that
\[d_{N}:=\sum_{i=1}^{N}p_{i,N}(1-p_{i,N})\overset{P(as)}{\rightarrow}\infty,\]
\item[\hypertarget{A1}{\textbf{A1}}) ] there exists a function $\Sigma':\mathcal{F}\times\mathcal{F}\mapsto\mathbb{R}$ such that
\[\frac{1}{N}\sum_{i=1}^{N}\frac{1-p_{i,N}}{p_{i,N}}[f(Y_{i})-R_{N}(f)p_{i,N}][g(Y_{i})-R_{N}(g)p_{i,N}]\overset{P(as)}{\rightarrow}\Sigma'(f,g)\]
for every $f,g\in\mathcal{F}$,
\item[\hypertarget{A2}{\textbf{A2}}) ] for every finite-dimensional vector $\mathbf{f}:=(f_{1}, f_{2},\dots, f_{r})^{\intercal}\in\mathcal{F}^{r}$ and for every $\epsilon>0$
\[\frac{1}{N}\sum_{i=1}^{N}\frac{\lVert\mathbf{f}(Y_{i})-R_{N}(\mathbf{f})p_{i,N}\rVert^{2}}{p_{i,N}} I(\lVert\mathbf{f}(Y_{i})-R_{N}(\mathbf{f})p_{i,N}\rVert>p_{i,N}\sqrt{N}\epsilon)\overset{P(as)}{\rightarrow}0,\]
where $\lVert\cdot\rVert$ is the euclidean norm on $\mathbb{R}^{r}$, $\mathbf{f}(Y_{i}):=(f_{1}(Y_{i}), f_{2}(Y_{i}),\dots, f_{r}(Y_{i}))^{\intercal}$ and $R_{N}(\mathbf{f}):=(R_{N}(f_{1}), R_{N}(f_{2}), \dots, R_{N}(f_{r}))^{\intercal}$.
\end{itemize}

Then it follows that the function $\Sigma'$ is a positive semidefinite covariance function, and for every finite-dimensional vector $\mathbf{f}\in\mathcal{F}^{r}$ and for every $\mathbf{t}\in\mathbb{R}^{r}$
\begin{equation*}\label{conditional_convergence_chf_CCC}
E_{d}\exp\left(i\mathbf{t}^{\intercal}\mathbb{G}_{N}'\mathbf{f}\right)\overset{P(as)}{\rightarrow} \exp\left(-\frac{1}{2}\mathbf{t}^{\intercal}\Sigma'(\mathbf{f})\mathbf{t}\right),
\end{equation*}
where $\mathbb{G}_{N}'\mathbf{f}:=(\mathbb{G}_{N}'f_{1}, \mathbb{G}_{N}'f_{2}, \dots, \mathbb{G}_{N}'f_{r})^{\intercal}$, and where $\Sigma'(\mathbf{f})$ is the covariance matrix whose elements are given by $\Sigma'_{(ij)}(\mathbf{f}):=\Sigma'(f_{i},f_{j})$, $i,j=1,2,\dots,r$. 
\end{lem}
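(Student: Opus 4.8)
The plan is to fix a finite-dimensional vector $\mathbf f:=(f_1,\dots,f_r)^\intercal\in\mathcal F^r$ and a vector $\mathbf t\in\mathbb R^r$, put $g:=\sum_{j=1}^r t_jf_j$, and exploit that by linearity of the \hyperlink{HTEP}{HTEP} one has $\mathbf t^\intercal\mathbb G_N'\mathbf f=\mathbb G_N'g$; it then suffices to prove $E_d\exp(i\mathbb G_N'g)\overset{P(as)}{\rightarrow}\exp(-\tfrac12\mathbf t^\intercal\Sigma'(\mathbf f)\mathbf t)$. Since $R_N$ is linear, the quadratic form in \hyperlink{A1}{A1} evaluated at $g$ equals $\sum_{j,k}t_jt_k$ times the bilinear form attached to the pair $(f_j,f_k)$, so by \hyperlink{A1}{A1} it converges, in the relevant mode, to $\sum_{j,k}t_jt_k\Sigma'(f_j,f_k)=\mathbf t^\intercal\Sigma'(\mathbf f)\mathbf t$; as each prelimit is a sum of nonnegative terms, the limit is nonnegative, which shows that $\Sigma'$ is a symmetric positive semidefinite, hence covariance, function.

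First I would handle the underlying canonical Poisson design. Writing $W_N^P:=\sqrt N\,(T_N(g;\mathbf S_N^P)-\overline g)=\tfrac1{\sqrt N}\sum_i\tfrac{g(Y_i)-R_N(g)p_{i,N}}{p_{i,N}}(S_{i,N}^P-p_{i,N})$, the summands are, conditionally on the data, independent and centered with total conditional variance $B_N^2(g)$. The key point is that the Lindeberg method bounds $|E_d\exp(iW_N^P)-\exp(-\tfrac12 B_N^2(g))|$ by an expression that tends to $0$ as soon as $B_N^2(g)$ converges and the associated Lindeberg sum vanishes, and this bound is a \emph{deterministic} inequality given the data. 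Now the Lindeberg sum for $g$ is dominated, through $|g(Y_i)-R_N(g)p_{i,N}|\le\lVert\mathbf t\rVert\,\lVert\mathbf f(Y_i)-R_N(\mathbf f)p_{i,N}\rVert$, by the left-hand side of \hyperlink{A2}{A2}; hence \hyperlink{A1}{A1} (which gives $B_N^2(g)\rightarrow\mathbf t^\intercal\Sigma'(\mathbf f)\mathbf t$) and \hyperlink{A2}{A2} together yield $E_d\exp(iW_N^P)\overset{P(as)}{\rightarrow}\exp(-\tfrac12\mathbf t^\intercal\Sigma'(\mathbf f)\mathbf t)$.

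Next I would transport this conditional CLT along H\'ajek's two approximations, with \hyperlink{A0}{A0} guaranteeing $d_N\rightarrow\infty$. Result \ref{Hajek_result_2} bounds $|E_d\exp(iW_N^{P_0})-E_d\exp(iW_N^P)|$ by the total variation distance $d_T(\mathfrak p_N^{P_0},\mathfrak p_N^P)\rightarrow0$ (as $\exp(i\,\cdot)$ has modulus $1$), transferring the limit to the $P_0$-statistic $W_N^{P_0}:=\sqrt N(T_N(g;\mathbf S_N^{P_0})-\overline g)$. Result \ref{Hajek_result_3}, whose expectation is a design expectation, states that at the $\sqrt N$ scale the difference $\widetilde{\mathbb G}_N'g-W_N^{P_0}$ is negligible in conditional $L^2$, where $\widetilde{\mathbb G}_N'g:=\sqrt N(Y_N(g;\mathbf S_N^R)-\overline g)$ is the \emph{$p$-weighted} rejective process; since $\exp(i\,\cdot)$ is $1$-Lipschitz, $|E_d\exp(i\widetilde{\mathbb G}_N'g)-E_d\exp(iW_N^{P_0})|\le(E_d|\widetilde{\mathbb G}_N'g-W_N^{P_0}|^2)^{1/2}\rightarrow0$, so the conditional CLT now holds for $\widetilde{\mathbb G}_N'g$.

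The final step, which I expect to be the genuine obstacle, is to pass from the $p$-weighted process $\widetilde{\mathbb G}_N'g$ to the actual \hyperlink{HTEP}{HTEP} $\mathbb G_N'g$, which is weighted by the rejective first order probabilities $\pi_{i,N}$. With $\delta_N:=\max_i|\pi_{i,N}/p_{i,N}-1|$, Result \ref{Hajek_result_1}(ii)--(iii) gives $\delta_N\rightarrow0$; the difference $\Delta_N:=\mathbb G_N'g-\widetilde{\mathbb G}_N'g=\tfrac1{\sqrt N}\sum_i g(Y_i)S_{i,N}^R(\pi_{i,N}^{-1}-p_{i,N}^{-1})$ has conditional variance bounded, via H\'ajek's variance approximation for linear statistics under rejective sampling, by $\delta_N^2$ times a conditionally $O(1)$ factor, hence vanishing; but its conditional mean $\tfrac1{\sqrt N}\sum_i g(Y_i)(1-\pi_{i,N}/p_{i,N})$ is the delicate term. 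Crude bounds relying only on $\delta_N\rightarrow0$ are insufficient once the $p_{i,N}$ are allowed arbitrarily close to $0$---precisely the generality that distinguishes the present setting from one with inclusion probabilities bounded away from zero---so I would control this $\sqrt N$-scaled bias through the sharper H\'ajek expansion of $\pi_{i,N}$ in $p_{i,N}$ together with the moment estimates underlying Results \ref{Hajek_result_2}--\ref{Hajek_result_3}. Once $\Delta_N\overset{P(as)}{\rightarrow}0$ is in hand, $E_d\exp(i\mathbb G_N'g)$ inherits the Gaussian limit of $E_d\exp(i\widetilde{\mathbb G}_N'g)$, and letting $\mathbf t$ and $\mathbf f$ vary gives the claimed \hyperlink{CWCM}{CWCM}; throughout, the probability and almost-sure versions proceed in parallel because every transfer is driven by an ingredient assumed to converge in the corresponding mode.
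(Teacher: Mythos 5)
Your proposal follows essentially the same route as the paper's proof: a Lindeberg central limit theorem (via Cram\'er--Wold) for the auxiliary statistic $T_{N}(\cdot;\mathbf{S}_{N}^{P})$ under the canonical Poisson design, followed by transfers along Result \ref{Hajek_result_2} (total variation between $P_{0}$ and $P$), Result \ref{Hajek_result_3} (conditional $L^{2}$ closeness of $T_{N}(\cdot;\mathbf{S}_{N}^{P_{0}})$ and $Y_{N}(\cdot;\mathbf{S}_{N}^{R})$), and finally Result \ref{Hajek_result_1} to replace the $p_{i,N}$ by the $\pi_{i,N}$. The one step you single out as the ``genuine obstacle'' --- the $\sqrt{N}$-scaled bias incurred in that last substitution --- is precisely the step the paper itself dispatches with the single remark that it ``can be easily fixed by using Result \ref{Hajek_result_1}'', so your sketch matches the paper's argument in structure and, at that point, exceeds it in candor.
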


\begin{proof}
Assume WLOG that $\{\mathbf{S}_{N}^{R}\}_{N=1}^{\infty}$ and the two sequences $\{\mathbf{S}_{N}^{P}\}_{N=1}^{\infty}$ and $\{\mathbf{S}_{N}^{P_{0}}\}_{N=1}^{\infty}$ of the previous subsection are defined in such way that the sequence of pdfs corresponding to $\{\mathbf{S}_{N}^{P}\}_{N=1}^{\infty}$ is given by $\{\mathfrak{p}_{N}^{P}(\cdot;\mathbf{p}_{N})\}_{N=1}^{\infty}$, and such that the sequence of joint pdfs corresponding to $\{(\mathbf{S}_{N}^{R}, \mathbf{S}_{N}^{P_{0}})\}_{N=1}^{\infty}$ is given by $\{\mathfrak{p}_{N}^{R,P_{0}}(\cdot,\cdot;\mathbf{p}_{N})\}_{N=1}^{\infty}$. This can be done in many ways by defining each vector $\mathbf{S}_{N}^{P}$, $\mathbf{S}_{N}^{P_{0}}$ and $\mathbf{S}_{N}^{R}$ as a measurable function of $\mathbf{X}_{N}$ and a single uniform-$[0,1]$ random variable $D$ as described in Section \ref{Section_notation_definitions}. In what follows the sequence of joint pdfs corresponding to $\{(\mathbf{S}_{N}^{P}, \mathbf{S}_{N}^{R}, \mathbf{S}_{N}^{P_{0}})\}_{N=1}^{\infty}$ will not be relevant. 

Now, consider first the sequence of stochastic processes $\{\mathbb{T}_{N}^{P}\}_{N=1}^{\infty}:=\{\{\mathbb{T}_{N}^{P}f: f\in\mathcal{F}\}\}_{N=1}^{\infty}$ with $\mathbb{T}_{N}^{P}f$ defined as
\begin{equation}\label{processo_T_N}
\begin{split}
\mathbb{T}_{N}^{P}f&:=\sqrt{N}\left(T_{N}(f;\mathbf{S}_{N}^{P})-\frac{1}{N}\sum_{i=1}^{N}f(Y_{i})\right)\\
&=\frac{1}{\sqrt{N}}\sum_{i=1}^{N}\left(\frac{S_{i,N}^{P}}{p_{i,N}}-1\right)[f(Y_{i})-R_{N}(f)p_{i,N}]\\
&=\frac{1}{\sqrt{N}}\sum_{i=1}^{N}\left(\frac{S_{i,N}^{P}}{p_{i,N}}-1\right)f(Y_{i})-\frac{R_{N}(f)}{\sqrt{N}}\sum_{i=1}^{N}(S_{i,N}^{P}-p_{i,N})\\
&:=\mathbb{Y}_{N}^{P}f-\mathbb{R}_{N}^{P}f.
\end{split}
\end{equation}
Note that $E_{d}\mathbb{T}_{N}^{P}f=0$ for every $f\in\mathcal{F}$, and that the left side of the display in condition \hyperlink{A1}{A1} is the sequence of covariances $\Sigma_{N}'(f,g):=E_{d}\mathbb{T}_{N}^{P}f\mathbb{T}_{N}^{P}g$. Now, for $\mathbf{f}\in\mathcal{F}^{r}$ consider the triangular array of rowwise conditionally independent random vectors
\[Z_{i,N}\mathbf{f}:=\left(\frac{S_{i,N}^{P}}{p_{i,N}}-1\right)[\mathbf{f}(Y_{i})-R_{N}(\mathbf{f})p_{i,N}],\]
\[i=1,2,\dots, N,\quad N=1,2,\dots.\]
Observe that the random vector $\mathbb{T}_{N}^{P}\mathbf{f}:=(\mathbb{T}_{N}^{P}f_{1}, \mathbb{T}_{N}^{P}f_{2}, \dots, \mathbb{T}_{N}^{P}f_{r})^{\intercal}$ can be written as
\[\mathbb{T}_{N}^{P}\mathbf{f}=\frac{1}{\sqrt{N}}\sum_{i=1}^{N}Z_{i,N}\mathbf{f}.\]
Using the fact that $\Sigma_{N}'(f,g):=E_{d}\mathbb{T}_{N}^{P}f\mathbb{T}_{N}^{P}g\overset{P(as)}{\rightarrow}\Sigma'(f,g)$ along with condition \hyperlink{A2}{A2} it is not difficult to show that the Lindeberg condition
\[\frac{1}{N\mathbf{t}^{\intercal}\Sigma_{N}'(\mathbf{f})\mathbf{t}}\sum_{i=1}^{N}E_{d}\left(\mathbf{t}^{\intercal}Z_{i,N}\mathbf{f}\right)^{2}I\left(|\mathbf{t}^{\intercal}Z_{i,N}\mathbf{f}|>\epsilon\sqrt{\mathbf{t}^{\intercal}\Sigma_{N}'(\mathbf{f})\mathbf{t}}\right)\overset{P(as)}{\rightarrow}0,\quad\epsilon>0,\]
must be satisfied whenever $\mathbf{f}\in\mathcal{F}$ and $\mathbf{t}\in\mathbb{R}^{r}$ are such that $\mathbf{t}^{\intercal}\Sigma'(\mathbf{f})\mathbf{t}>0$. Therefore it follows that
\[E_{d}\exp\left(i\mathbf{t}^{\intercal}\mathbb{T}_{N}^{P}\mathbf{f})\right)\overset{P(as)}{\rightarrow} \exp\left(-\frac{1}{2}\mathbf{t}^{\intercal}\Sigma'(\mathbf{f})\mathbf{t}\right).\]

Next, consider the sequence of stochastic processes $\{\mathbb{T}_{N}^{P_{0}}\}_{N=1}^{\infty}:=\{\{\mathbb{T}_{N}^{P_{0}}f: f\in\mathcal{F}\}\}_{N=1}^{\infty}$ with $\mathbb{T}_{N}^{P_{0}}f$ defined in the same way as $\mathbb{T}_{N}^{P}f$ but with $\mathbf{S}_{N}^{P_{0}}$ in place of $\mathbf{S}_{N}^{P}$. Use assumption \hyperlink{A0}{A0} along with Result \ref{Hajek_result_2} in Section \ref{rejective_sampling_review} to show that
\[\left|E_{d}\exp\left(i\mathbf{t}^{\intercal}\mathbb{T}_{N}^{P}\mathbf{f})\right)-E_{d}\exp\left(i\mathbf{t}^{\intercal}\mathbb{T}_{N}^{P_{0}}\mathbf{f})\right)\right|\overset{P(as)}{\rightarrow}0, \quad \mathbf{f}\in\mathcal{F}^{r}, \mathbf{t}\in\mathbb{R}^{r}.\]
Note that this does not require to know the joint distributions of the vectors $\mathbf{S}_{N}^{P_{0}}$ and $\mathbf{S}_{N}^{P}$. 

Third, consider the sequence of stochastic processes $\{\mathbb{Y}_{N}^{R}\}_{N=1}^{\infty}:=\{\{\mathbb{Y}_{N}^{R}f: f\in\mathcal{F}\}\}_{N=1}^{\infty}$ with $\mathbb{Y}_{N}^{R}f$ defined in the same way as $\mathbb{Y}_{N}^{P}f$ but with $\mathbf{S}_{N}^{R}$ in place of $\mathbf{S}_{N}^{P}$. Use Result \ref{Hajek_result_3} in Section \ref{rejective_sampling_review} to conclude that
\[\left|E_{d}\exp\left(i\mathbf{t}^{\intercal}\mathbb{T}_{N}^{P_{0}}\mathbf{f})\right)-E_{d}\exp\left(i\mathbf{t}^{\intercal}\mathbb{Y}_{N}^{R}\mathbf{f})\right)\right|\overset{P(as)}{\rightarrow}0, \quad \mathbf{f}\in\mathcal{F}^{r}, \mathbf{t}\in\mathbb{R}^{r}\]
as well. 

Finally, note that the definition of $\mathbb{Y}_{N}^{R}$ coincides with the one of $\mathbb{G}_{N}'$ except for the fact that the former contains the first order sample inclusion probabilities corresponding to $\mathbf{S}_{N}^{P}$ in place of those corresponding to $\mathbf{S}_{N}^{R}$, i.e. $\mathbb{Y}_{N}^{R}$ contains $p_{i,N}$ in place of $\pi_{i,N}:=E_{d}S_{i,N}^{R}$. However, this problem can be easily fixed by using Result \ref{Hajek_result_1}.
\end{proof}

\begin{rem}\label{remark_condizione_A2_stella}
If each vector $\mathbf{p}_{N}$ and every $f\in\mathcal{F}$ are measurable (in their respective senses), then condition \hyperlink{A2}{A2} will be certainly satisfied if $P_{y}f^{2}<\infty$ for every $f\in\mathcal{F}$ and 
\begin{itemize}
\item[\hypertarget{A2_stella}{\textbf{A2$^{\mathbf{*}}$}}) ] there exists a constant $L>0$ such that $\min_{1\leq i\leq N}p_{i,N}>L$ with probability tending to $1$ (eventually almost surely).
\end{itemize}
\end{rem}

%The following corollary is analogous to Corollary 3.1 in \cite{Pasquazzi_2019}. Also its proof is the same. In its statement $G\restriction\mathcal{G}$ indicates the restriction of an $\mathcal{F}$-indexed stochastic process $G$ to a subset $\mathcal{G}$ of $\mathcal{F}$.
%
%
%\begin{cor}\label{cor_marginal_convergence}
%Under the assumptions of Lemma \ref{lem_marginal_convergence_Poisson_process_C} it follows that for every finite subset $\mathcal{G}$ of $\mathcal{F}$ the random variable  
%\[\sup_{g\in BL_{1}(l^{\infty}(\mathcal{G}))}\left|E_{d}g(\mathbb{G}_{N}'\restriction\mathcal{G})-Eg(\mathbb{G}'\restriction\mathcal{G})\right|\]
%is measurable, and the conclusion of Lemma \ref{lem_marginal_convergence_Poisson_process_C} is equivalent to
%\[\sup_{g\in BL_{1}(l^{\infty}(\mathcal{G}))}\left|E_{d}g(\mathbb{G}_{N}'\restriction\mathcal{G})-Eg(\mathbb{G}'\restriction\mathcal{G})\right|\overset{P(as)}{\rightarrow} 0\quad\text{ for every finite }\mathcal{G}\subseteq\mathcal{F},\]
%where $BL_{1}(l^{\infty}(\mathcal{G}))$ is the set of all functions $h:l^{\infty}(\mathcal{G})\mapsto[0,1]$ such that $|h(z_{1}-h(z_{2})|\leq \lVert z_{1}-z_{2}\rVert_{\mathcal{G}}$ for every $z_{1},z_{2}\in l^{\infty}(\mathcal{G})$, and where $\mathbb{G}'$ is an $\mathcal{F}$-indexed zero mean Gaussian process with covariance function $\Sigma'$. 
%\end{cor}

Now, Lemma \ref{lem_marginal_convergence_Poisson_process_C} provides sufficient conditions for convergence of the finite-dimensional marginal distributions of the sequence of \hyperlink{HTEP}{HTEP}s, but in order to establish (conditional) weak convergence in $l^{\infty}(\mathcal{F})$ for infinite function classes $\mathcal{F}$ it must still be shown that sequence of \hyperlink{HTEP}{HTEP}s is (conditionally) asymptotically tight in $l^{\infty}(\mathcal{F})$ (for unconditional weak convergence this follows from Theorem 1.5.4 on page 35 in \cite{vdVW}, while for conditional weak convergence this follows from Theorem \ref{sufficient_conditions_for_CWC} in Section \ref  {general_weak_convergence_theory}). By Theorem 1.5.7 on page 37 in \cite{vdVW} (Theorem \ref{Theorem_CAT_characterizations} in Section \ref  {general_weak_convergence_theory}) this can be done by showing that there exists a semimetric $\rho$ for which $\mathcal{F}$ is totally bounded and for which the \hyperlink{HTEP}{HTEP} sequence is (conditionally) asymptotically equicontinuous (henceforth AEC). In this paper the choice of the semimetric $\rho$ will depend on the definition of the first order sample inclusion probabilities. In the next subsection it will be seen that if the first order sample inclusion probabilities are bounded away from zero, it is convenient to consider the $L_{2}(P_{y})$-semimetric
\[\rho(f,g):=\sqrt{P_{y}(f-g)^{2}},\quad f,g\in\mathcal{F}.\]
The subsequent subsection will then treat the case where the first order sample inclusion probabilities are proportional to some size variable which might take on arbitrarily small values. For that case another semimetric will be used.

\subsection{CPS designs with a positive lower bound on the first order sample inclusion probabilities}

The next lemma provides sufficient conditions which make sure that $\mathcal{F}$ is totally bounded w.r.t. the $L_{2}(P_{y})$ semimetric $\rho$ and that the sequence of \hyperlink{HTEP}{HTEP}s \hyperlink{conditional_AEC}{conditionally AEC} w.r.t. $\rho$.

\begin{lem}[Total boundedness and \hyperlink{conditional_AEC}{conditional AEC}]\label{AEC_case_C}
Let $\{\mathbf{S}_{N}^{R}\}_{N=1}^{\infty}$ and $\{\mathbf{p}_{N}\}_{N=1}^{\infty}$ be defined as in Lemma \ref{lem_marginal_convergence_Poisson_process_C}, let $\mathcal{F}$ be a class of measurable functions $f:\mathcal{Y}\mapsto\mathbb{R}$ and let $\{\mathbb{G}_{N}'\}_{N=1}^{\infty}:=\{\{\mathbb{G}_{N}'f: f\in\mathcal{F}\}\}_{N=1}^{\infty}$ be the sequence \hyperlink{HTEP}{HTEP}s corresponding to $\{\mathbf{S}_{N}^{R}\}_{N=1}^{\infty}$ and $\mathcal{F}$. Assume that condition \hyperlink{A2_stella}{A2$^{*}$} holds and that assumptions
\begin{itemize}
%\item[M) ] $(\mathcal{F}_{\infty})^{2}:=\{(f-g)^{2}:f,g\in\mathcal{F}\}$ is a $P_{y}$-measurable class of functions (see Definition 2.3.3 on page 110 in \cite{vdVW}), i.e. the function
%\[\mathbf{Y}_{N}\mapsto \sup_{f\in(\mathcal{F}_{\infty})^{2}} \left|\sum_{i=1}^{N}e_{i} f(Y_{i})\right|\]
%is measurable on the completion of $(\mathcal{Y}^{N}, \mathcal{A}^{N}, P_{y}^{N})$ for every $N$ and for every $(e_{1}, e_{2},\dots, e_{N})\in\mathbb{R}^{N}$;
\item[\hypertarget{GC}{\textbf{GC}}) ] $(\mathcal{F}_{\infty})^{2}:=\{(f-g)^{2}:f,g\in\mathcal{F}\}$ is an outer almost sure $P_{y}$-Glivenko-Cantelli class
\item[\hypertarget{F1}{\textbf{F1}}) ] $\mathcal{F}$ has an envelope function $F$ such that $P_{y}^{*}F^{2}<\infty$ and such that the uniform entropy condition
\begin{equation}\label{uniform_entropy}
\int_{0}^{\infty}\sup_{Q_{y}}\sqrt{\log N(\epsilon\lVert F\rVert_{L_{2}(Q_{y})}, \mathcal{F}, L_{2}(Q_{y}))}d\epsilon<\infty
\end{equation}
holds. In the last display the supremum is taken over all finitely discrete probability measures $Q_{y}$ on $\mathcal{Y}$ such that $\lVert F\rVert_{L_{2}(Q_{y})}:=\int F^{2}dQ_{y}>0$;
\end{itemize}
Then it follows that
\begin{itemize}
\item[(i) ] $\mathcal{F}$ is totally bounded w.r.t. $\rho$;
\item[(ii) ]  
\[E_{d}\lVert \mathbb{G}_{N}'\rVert_{\mathcal{F}_{\delta_{N}}}\overset{P*(as*)}{\rightarrow}0\quad\text{ for every }\delta_{N}\downarrow 0,\]
where
\[\mathcal{F}_{\delta}:=\{f-g: f,g \in\mathcal{F}\wedge \rho(f,g)<\delta\},\quad\delta>0.\]
\end{itemize}
\end{lem}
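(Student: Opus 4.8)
The plan is to obtain (i) from the uniform entropy condition by itself, and to obtain (ii) by a chaining argument in which the symmetrization step that is standard for i.i.d.\ empirical processes---and which is unavailable for rejective sampling---is replaced by a design-conditional Hoeffding inequality. For part (i) I would observe that \hyperlink{F1}{\textbf{F1}} bounds the covering numbers $N(\epsilon\lVert F\rVert_{L_2(Q_y)},\mathcal{F},L_2(Q_y))$ uniformly over all finitely discrete probability measures $Q_y$, and in particular renders each of them finite. Since $P_y^{*}F^2<\infty$, the measure $P_y$ can be approximated by finitely discrete measures, and a limiting argument exactly as in the proof of the Donsker theorem in \cite{vdVW} converts these uniform bounds into a finite $\rho$-net of $\mathcal{F}$ for every $\epsilon>0$; hence $\mathcal{F}$ is totally bounded with respect to $\rho$.

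The crux of part (ii) is a design-conditional exponential bound for the increments of the \hyperlink{HTEP}{HTEP}. Writing $\mathbb{G}_N'(f-g)=\sum_{i=1}^{N}c_{i,N}(S_{i,N}^R-\pi_{i,N})$ with $c_{i,N}:=N^{-1/2}(f(Y_i)-g(Y_i))/\pi_{i,N}$, I would use that the inclusion indicators of a rejective design are negatively associated, so that the Hoeffding inequality \cite{Hoeffding_1963}, in the form exploited in \cite{Bertail_2017}, carries over to the CPS case and gives, conditionally on the data,
\[
P_d\{|\mathbb{G}_N'(f-g)|>t\}\leq 2\exp\left(-\frac{t^2}{2\sum_{i=1}^{N}c_{i,N}^2}\right),\qquad t>0.
\]
Condition \hyperlink{A2_stella}{A2$^{*}$}, together with Result \ref{Hajek_result_1} (in force since $d_N\to\infty$), keeps $\pi_{i,N}$ bounded away from zero, whence $\sum_i c_{i,N}^2\leq L^{-2}\lVert f-g\rVert_{N,2}^2$, where $\lVert h\rVert_{N,2}^2:=N^{-1}\sum_{i=1}^{N}h(Y_i)^2$ is the empirical $L_2$-seminorm. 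Thus, conditionally on $\{(Y_i,X_i)\}$, the process $\{\mathbb{G}_N'f:f\in\mathcal{F}\}$ is sub-Gaussian with respect to the random semimetric $d_N(f,g):=L^{-1}\lVert f-g\rVert_{N,2}$.

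Sub-Gaussianity is all that a generic chaining bound requires, so I would then apply the maximal inequality for sub-Gaussian processes (e.g.\ Corollary 2.2.8 in \cite{vdVW}), conditionally on the data, to get
\[
E_d\lVert\mathbb{G}_N'\rVert_{\mathcal{F}_{\delta_N}}\lesssim\int_0^{\theta_N}\sqrt{\log N(u,\mathcal{F}_{\delta_N},d_N)}\,du,\qquad \theta_N:=\sup_{f,g\in\mathcal{F}:\,\rho(f,g)<\delta_N}d_N(f,g).
\]
Since $\mathcal{F}_{\delta_N}\subset\mathcal{F}-\mathcal{F}$ has envelope $2F$, the $d_N$-covering numbers are bounded by those of $\mathcal{F}-\mathcal{F}$ under $L_2(\mathbb{P}_N)$, with $\mathbb{P}_N:=N^{-1}\sum_i\delta_{Y_i}$, and after the change of variables $u=2L^{-1}\lVert F\rVert_{N,2}\,\epsilon$ the integrand is dominated by the uniform entropy integrand of \hyperlink{F1}{\textbf{F1}}. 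Now $P_y^{*}F^2<\infty$ yields $\lVert F\rVert_{N,2}\to\lVert F\rVert_{L_2(P_y)}$, while \hyperlink{GC}{\textbf{GC}} gives $\sup_{\rho(f,g)<\delta_N}\lVert f-g\rVert_{N,2}^2\leq\delta_N^2+\lVert\mathbb{P}_N-P_y\rVert_{(\mathcal{F}_\infty)^2}\to0$, so that $\theta_N\to0$. Because the uniform entropy integral in \hyperlink{F1}{\textbf{F1}} converges, the right-hand side above tends to $0$---in outer probability, respectively outer almost surely, according to the convergence mode of \hyperlink{GC}{\textbf{GC}}---which is conclusion (ii); by Remark \ref{remark_conditional_AEC} it also delivers \hyperlink{conditional_AEC}{conditional AEC}.

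The main obstacle is the transfer of Hoeffding's inequality to rejective sampling: lacking symmetrization, one must read off the sub-Gaussian increment bound directly from the negative-association (conditional-Poisson) structure. A secondary but genuine difficulty, and precisely the place where the details omitted in \cite{Bertail_2017} must be supplied, is the bookkeeping needed to turn the data-dependent quantities $d_N$, $\theta_N$, $\lVert F\rVert_{N,2}$ and the random covering numbers into measurable majorants, so that the outer design expectation $E_d\lVert\mathbb{G}_N'\rVert_{\mathcal{F}_{\delta_N}}$ is legitimately bounded and the random entropy (controlled by \hyperlink{F1}{\textbf{F1}}) can be coordinated with the random radius (controlled by \hyperlink{GC}{\textbf{GC}}).
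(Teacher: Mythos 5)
Your proposal is correct and follows essentially the same route as the paper's proof: the sub-Gaussian increment bound obtained from negative association of the rejective inclusion indicators combined with Hoeffding's lemma and the lower bound on the $\pi_{i,N}$'s from A$^{*}$ and Result 1, followed by the maximal inequality of Corollary 2.2.8 in van der Vaart and Wellner applied over the whole class $\mathcal{F}_{\delta_{N}}$ (the corrected form of the argument in Bertail et al.), with GC shrinking the random diameter and F1 dominating the rescaled entropy integrand. The only detail you flag without resolving---measurability of the design expectation of the supremum---is handled in the paper by noting that finiteness of the empirical covering numbers yields a countable subset of $\mathcal{F}_{\delta_{N}}$ attaining the same supremum, which makes the process separable for the design distribution.
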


\begin{proof}
Part (i) of the conclusion follows from condition \hyperlink{F1}{F1} (see Problem 2.5.1 on page 133 in \citep{vdVW}.

The proof of part (ii) of the conclusion is essentially the same as the proof of \hyperlink{conditional_AEC}{conditional AEC} for the rejective sampling case given in \cite{Bertail_2017} (see pages 12-13 in the supplement to that paper) but it corrects a little mistake in the final part of that proof. First it will be shown that for arbitrary $\delta_{N}\downarrow0$ the corresponding stochastic processes $\{\mathbb{G}_{N}'f:f\in\mathcal{F}_{\delta_{N}}\}$ are, with probability tending to $1$ (or eventually almost surely), conditionally subgaussian w.r.t. the empirical semimetric
\[\rho_{N}(f,g):=\sqrt{\mathbb{P}_{y,N}(f-g)^{2}}:=\sqrt{\frac{1}{N}\sum_{i=1}^{N}[f(Y_{i})-g(Y_{i})]^{2}},\quad f,g\in\mathcal{F}_{\delta_{N}},\]
i.e. it will be shown that there exists a constant $C>0$ (which does not depend on the sample points $\omega\in\Omega_{y,x}^{\infty}\times\Omega_{d}$ and neither on $N$) such that, with probability tending to $1$ (or eventually almost surely),
\begin{equation}\label{subgaussian_inequality}
P_{d}\left\{|\mathbb{G}_{N}'f-\mathbb{G}_{N}'g|>x\right\}\leq 2 e^{-Cx^{2}/\rho_{N}^{2}(f,g)}\quad\text{for every }f,g\in\mathcal{F}_{\delta_{N}}.
\end{equation}
To this aim, write
\begin{equation*}
\begin{split}
\mathbb{G}_{N}'f-\mathbb{G}_{N}'g&=\frac{1}{\sqrt{N}}\sum_{i=1}^{N}\frac{S_{i,N}^{R}-\pi_{i,N}}{\pi_{i,N}}[f(Y_{i})-g(Y_{i})]:=\sum_{i=1}^{N}Z_{i,N}(f,g),
\end{split}
\end{equation*}
so that, for every $x,\lambda>0$,
\begin{equation}\label{Chernoff_1}
\begin{split}
P_{d}\left\{\mathbb{G}_{N}'f-\mathbb{G}_{N}'g>x\right\}&=P_{d}\left\{e^{\lambda \sum_{i=1}^{N}Z_{i,N}(f,g)}>e^{\lambda x}\right\}\\
&\leq e^{-\lambda x} E_{d}e^{\lambda \sum_{i=1}^{N}Z_{i,N}(f,g)}
\end{split}
\end{equation}
by Markov's inequality. Now, note that by Theorem 2.8 in \cite{Joag-Dev_1983} the components of $\mathbf{S}_{N}^{R}$ are negatively associated, and hence it follows that 
\begin{equation}\label{Chernoff_2}
E_{d}e^{\lambda \sum_{i=1}^{N}Z_{i,N}(f,g)}=E_{d}\prod_{i=1}^{N}e^{\lambda Z_{i,N}(f,g)}\leq \prod_{i=1}^{N}E_{d}e^{\lambda Z_{i,N}(f,g)}.
\end{equation}
Since $E_{d}Z_{i,N}=0$ for every $i=1,2,\dots, N$ and for every $N=1,2,\dots$, and since
\[-|f(Y_{i})-g(Y_{i})|\leq \sqrt{N}Z_{i,N}\leq\frac{1-\pi_{i,N}}{\pi_{i,N}}|f(Y_{i})-g(Y_{i})|,\]
it follows from Hoeffding's lemma (see \cite{Hoeffding_1963}) that
\begin{equation*}
\begin{split}
E_{d}e^{\lambda Z_{i,N}(f,g)}\leq \exp\left\{\frac{\lambda^{2}}{8N\pi_{i,N}^{2}}[f(Y_{i})-g(Y_{i})]^{2}\right\}.
\end{split}
\end{equation*}
By assumption \hyperlink{A2_stella}{A2$^{*}$} and Result \ref{Hajek_result_1} from H{\'a}jek's paper the right side does not exceed 
\[\exp\left\{\frac{\lambda^{2}}{8N L^{2}}[f(Y_{i})-g(Y_{i})]^{2}\right\}\]
with probability tending to one (or eventually almost surely). In combination with (\ref{Chernoff_1}) and (\ref{Chernoff_2}) this shows that
\[P_{d}\left\{\mathbb{G}_{N}'f-\mathbb{G}_{N}'g>x\right\}\leq \exp\left\{-\lambda x+\frac{\lambda^{2}}{8 L^{2}}\rho_{N}^{2}(f,g)\right\}\]
with probability tending to $1$ (or eventually almost surely). Combining this inequality with the same inequality for $\mathbb{G}_{N}'g-\mathbb{G}_{N}'f$ shows that
\[P_{d}\left\{|\mathbb{G}_{N}'f-\mathbb{G}_{N}'g|>x\right\}\leq 2\exp\left\{-\lambda x+\frac{\lambda^{2}}{8 L^{2}}\rho_{N}^{2}(f,g)\right\}\]
with probability tending to $1$ (or eventually almost surely). Finally, optimizing the right side w.r.t. $\lambda>0$ yields the subgaussian inequality in (\ref{subgaussian_inequality}) with $C=2L^{2}$.

\smallskip

Now, note that 
\[N(\epsilon, \mathcal{F}_{\delta_{N}}, L_{2}(\mathbb{P}_{y,N}))\leq N(\epsilon, \mathcal{F}_{\infty}, L_{2}(\mathbb{P}_{y,N}))\leq N^{2}(\epsilon/2, \mathcal{F}, L_{2}(\mathbb{P}_{y,N})),\]
and that the uniform entropy condition in assumption \hyperlink{F1}{F1} implies that for every $\epsilon>0$ the square of the corresponding entropy number on the far right must be finite. From this it follows that $\mathcal{F}_{\delta_{N}}$ contains a countable subset $\mathcal{G}_{\delta_{N}}(\mathbf{Y}_{N})$ (note that this subset depends on $\mathbf{Y}_{N}$) such that
\[\left\lVert\mathbb{G}_{N}'\right\rVert_{\mathcal{F}_{\delta_{N}}}=\left\lVert\mathbb{G}_{N}'\right\rVert_{\mathcal{G}_{\delta_{N}}(\mathbf{Y}_{N})}.\]
As a consequence, the stochastic process $\{\mathbb{G}_{N}'f:f\in\mathcal{F}_{\delta_{N}}\}$ is separable in the sense required for an application of Corollary 2.2.8 on page 101 in \cite{vdVW} with respect to the sample design distribution of the process. Since it has already be shown that the sub-Gaussian inequality (\ref{subgaussian_inequality}) holds with probability tending to $1$ (or eventually almost surely), it follows by the second part of the conclusion of the just cited corollary that there exists a constant $K>0$ (which does not depend on the sample points $\omega\in\Omega_{y,x}^{\infty}\times\Omega_{d}$ and neither on $N$) such that
\begin{equation}\label{disugualglianza_corretta}
E_{d} \left\lVert\mathbb{G}_{N}'\right\rVert_{\mathcal{F}_{\delta_{N}}}\leq K\int_{0}^{\infty}\sqrt{\log D(\epsilon,\mathcal{F}_{\delta_{N}}, L_{2}(\mathbb{P}_{y,N}))}d\epsilon
\end{equation}
with inner probability tending to one (or eventually inner almost surely), where $D(\epsilon,\mathcal{F}_{\delta_{N}},L_{2}(\mathbb{P}_{y,N}))$ denotes the packing number, i.e. the cardinality of the largest subset $\mathcal{H}$ of $\mathcal{F}_{\delta_{N}}$ such that $\rho_{N}(f,g)>\epsilon$ for every $f,g\in\mathcal{H}$. Since 
\[D(\epsilon,\mathcal{F}_{\delta_{N}}, L_{2}(\mathbb{P}_{y,N}))\leq N(\epsilon/2,\mathcal{F}_{\delta_{N}},L_{2}(\mathbb{P}_{y,N})),\]
it follows that the right side in (\ref{disugualglianza_corretta}) is bounded by a constant multiple of
\[\int_{0}^{\infty}\sqrt{\log N(\epsilon,\mathcal{F}_{\delta_{N}},L_{2}(\mathbb{P}_{y,N}))}d\epsilon.\]
The proof can now be completed by using assumptions \hyperlink{GC}{GC} and \hyperlink{F1}{F1} in order to show that the latter integral goes to zero outer almost surely. This can be done as in the proof of Theorem 2.5.2 on page 127 in \cite{vdVW} (see the lines following display (2.5.3) on page 128 in \cite{vdVW}; see also Remark \ref{rem_ass_M} below in order to see that assumption \hyperlink{GC}{GC} can be replaced with a measurability condition).
\end{proof}

\begin{rem}\label{rem_ass_M}
In the proof of Theorem 2.5.2 on page 127 in \cite{vdVW} it is shown that condition \hyperlink{F1}{F1} together with condition
\begin{itemize}
\item[\hypertarget{M1}{\textbf{M1}}) ] $(\mathcal{F}_{\infty})^{2}:=\{(f-g)^{2}:f,g\in\mathcal{F}\}$ is a $P_{y}$-measurable class of functions (see Definition 2.3.3 on page 110 in \cite{vdVW}), i.e. the function
\[\mathbf{Y}_{N}\mapsto \sup_{f\in(\mathcal{F}_{\infty})^{2}} \left|\sum_{i=1}^{N}e_{i} f(Y_{i})\right|\]
is measurable on the completion of $(\mathcal{Y}^{N}, \mathcal{A}^{N}, P_{y}^{N})$ for every $N$ and for every $(e_{1}, e_{2},\dots, e_{N})\in\mathbb{R}^{N}$
\end{itemize}
imply condition \hyperlink{GC}{GC}. Moreover, in the proof of Theorem 2.5.2 on page 127 in \cite{vdVW} assumption \hyperlink{M1}{M1} is used only for this purpose. Thus, the proof of Theorem 2.5.2 on page 127 in \cite{vdVW} does actually show that assumptions \hyperlink{F1}{F1}, \hyperlink{GC}{GC} and
\begin{itemize}
\item[\hypertarget{M2}{\textbf{M2}}) ] for every $\delta>0$ the corresponding function class $\mathcal{F}_{\delta}$ is a $P_{y}$-measurable class of functions, i.e. the function
\[\mathbf{Y}_{N}\mapsto \sup_{f\in\mathcal{F}_{\delta}} \left|\sum_{i=1}^{N}e_{i} f(Y_{i})\right|\]
is measurable on the completion of $(\mathcal{Y}^{N}, \mathcal{A}^{N}, P_{y}^{N})$ for every $N$ and for every $(e_{1}, e_{2},\dots, e_{N})\in\mathbb{R}^{N}$
\end{itemize}
imply that $\mathcal{F}$ is a $P_{y}$-Donsker class. 
\end{rem}

\begin{rem}\label{condition_PM}
It is not difficult to show that condition 
\begin{itemize}
\item[\hypertarget{PM}{\textbf{PM}}) ] $\mathcal{F}$ is a \textit{pointwise measurable} class of functions, i.e. $\mathcal{F}$ contains a countable subset $\mathcal{G}$ such that for every $f\in\mathcal{F}$ there exists a sequence $\{g_{m}\}_{m=1}^{\infty}$ of functions $g_{m}\in\mathcal{G}$ such that $f$ is the pointwise limit of $\{g_{m}\}_{m=1}^{\infty}$ (see Example 2.3.4 on page 110 in \cite{vdVW})
\end{itemize}
implies condition \hyperlink{M1}{M1} as well as condition \hyperlink{M2}{M2}.
\end{rem}

\begin{rem}
The FCLT for the rejective sampling case given in \cite{Bertail_2017} (Theorem 3.2 on page 105 of that paper) imposes neither assumption \hyperlink{M1}{M1} nor assumption \hyperlink{GC}{GC}. However, there is a mistake in the proof of \hyperlink{conditional_AEC}{conditional AEC} given in \cite{Bertail_2017}. In fact, inequality (S3) on page 13 in the supplement to \cite{Bertail_2017} is false in general. According to the first inequality in the conclusion of Corollary 2.2.8 on page 101 in \cite{vdVW}, which was used by the authors of \cite{Bertail_2017} in order to obtain inequality (S3), the left hand side of inequality (S3) should actually be
\[E_{d}\sup_{f,g\in\mathcal{F}_{\delta}:\rho_{N}(f,g)<\delta}\left|\mathbb{G}_{N}'f-\mathbb{G}_{N}'f\right|\]
rather than
\[E_{d}\sup_{f,g\in\mathcal{F}:\rho_{N}(f,g)<\delta}\left|\mathbb{G}_{N}'f-\mathbb{G}_{N}'f\right|\]
with the function class $\mathcal{F}$ in place of $\mathcal{F}_{\delta}$. As a consequence, the proof of \hyperlink{conditional_AEC}{conditional AEC} given in \cite{Bertail_2017} shows actually that
\[E_{d}\sup_{f,g\in\mathcal{F}_{\delta_{N}}:\rho_{N}(f,g)<\delta_{N}}\left|\mathbb{G}_{N}'f-\mathbb{G}_{N}'f\right|\overset{as*}{\rightarrow}0\quad\text{for every }\delta_{N}\downarrow0\]
which does not imply \hyperlink{conditional_AEC}{conditional AEC}. In order to obtain \hyperlink{conditional_AEC}{conditional AEC}, the authors of \cite{Bertail_2017} should have used the second inequality in the conclusion of Corollary 2.2.8 on page 101 in \cite{vdVW} rather than the first one. In this way they would have obtained inequality (\ref{disugualglianza_corretta}) instead of inequality (S3), and in order to prove that under condition \hyperlink{F1}{F1} the right side of (\ref{disugualglianza_corretta}) goes to zero outer almost surely some additional assumption seems to be necessary (cf. the proof of Theorem 2.5.2 on page 127 in \cite{vdVW}).
\end{rem}

As already pointed out in \cite{Pasquazzi_2019} (cf. Remark \ref{rem_measurable_suprema}), \textit{conditional} AEC w.r.t. to a given semimetric $\rho$ and not even the conclusion of Lemma \ref{AEC_case_C} (which is certainly stronger than \hyperlink{conditional_AEC}{conditional AEC}) \textit{seem} to be strong enough to imply \textit{unconditional} AEC, which for the \hyperlink{HTEP}{HTEP} sequence $\{\{\mathbb{G}_{N}'f:f\in\mathcal{F}\}\}_{N=1}^{\infty}$ can be defined as 
\begin{equation}\label{def_unconditional_AEC}
P^{*}\left\{\lVert\mathbb{G}_{N}'\rVert_{\mathcal{F}_{\delta_{N}}}>\epsilon\right\}\rightarrow 0\quad\text{ for every }\epsilon>0\text{ and for every }\delta_{N}\downarrow 0,
\end{equation}
where $P$ is the product probability measure $P_{y,x}^{\infty}\times P_{d}$ (cf. the equivalent definition of \hyperlink{conditional_AEC}{conditional AEC} given in Remark \ref{remark_conditional_AEC}). The problem is that for uncountable function classes $\mathcal{F}$ the random functions $\lVert\mathbb{G}_{N}'\rVert_{\mathcal{F}_{\delta}}$, $\delta>0$, might be non measurable and that $\lVert\mathbb{G}_{N}'\rVert_{\mathcal{F}_{\delta_{N}}}$ might therefore be strictly smaller than $\lVert\mathbb{G}_{N}'\rVert_{\mathcal{F}_{\delta_{N}}}^{*}$ with positive inner probability. As a consequence, the $P_{d}$-probabilities on the left side in (\ref{def_conditional_AEC}) might not be conditional probabilities in the proper sense and condition (\ref{def_unconditional_AEC}) might therefore fail even though condition (\ref{def_conditional_AEC}) is satisfied (note that this is consistent with the conjecture that \hyperlink{oasCWC}{oasCWC} does not imply unconditional weak convergence; see Remark \ref{rem_measurable_suprema}). To be safe, in order to deduce \textit{unconditional} AEC from \textit{conditional} AEC condition \hyperlink{PM}{PM} will be used in this paper. In fact, condition \hyperlink{PM}{PM} makes sure that the random functions $\lVert\mathbb{G}_{N}'\rVert_{\mathcal{F}_{\delta}}$, $\delta>0$, are all measurable and that the $P_{d}$-probabilities on the left side in (\ref{def_conditional_AEC}) are therefore conditional probabilities in the proper sense.

Now, combining the sufficient conditions for \hyperlink{CWCM}{CWCM} with those for total boundedness and \hyperlink{conditional_AEC}{conditional AEC} yields the following weak convergence results:% (use Theorem \ref{Theorem_CAT_characterizations}, Theorem \ref{sufficient_conditions_for_CWC}, Remark \ref{rem_uniformly_continuous_limit_process}, Remark \ref{rem_measurable_suprema} and Theorem \ref{teorema_generale_joint_weak_convergence}). 

\begin{thm}[conditional weak convergence]\label{opCWC_case_A}
Let $\{\mathbf{S}_{N}^{R}\}_{N=1}^{\infty}$, $\{\mathbf{p}_{N}\}_{N=1}^{\infty}$, $\mathcal{F}$ and $\{\mathbb{G}_{N}'\}_{N=1}^{\infty}:=\{\{\mathbb{G}_{N}'f: f\in\mathcal{F}\}\}_{N=1}^{\infty}$ be defined as in Lemma \ref{lem_marginal_convergence_Poisson_process_C}. Assume that conditions \hyperlink{A0}{A0}, \hyperlink{A1}{A1}, \hyperlink{A2_stella}{A2$^{*}$}, \hyperlink{GC}{GC} and \hyperlink{F1}{F1} are satisfied. Then it follows that
\begin{itemize}
\item[(i) ] there exists a zero-mean Gaussian process $\{\mathbb{G}'f:f\in\mathcal{F}\}$ with covariance function given by $\Sigma'$ which is a Borel measurable and tight mapping from some probability space into $l^{\infty}(\mathcal{F})$ such that
\[\sup_{h\in BL_{1}(l^{\infty}(\mathcal{F}))}\left|E_{d}h(\mathbb{G}_{N}')-Eh(\mathbb{G}')\right|\overset{P*(as*)}{\rightarrow}0,\]
\item[(ii) ] the sample paths $f\mapsto\mathbb{G}'f$ are uniformly continuous w.r.t. the $L_{2}(P_{y})$ semimetric $\rho(f,g):=[P_{y}(f-g)^{2}]^{1/2}$ with probability $1$.
\end{itemize}
\end{thm}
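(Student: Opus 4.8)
The plan is to assemble the three main results already established in this section together with the general Corollary \ref{cor_uniformly_continuous_limit_process} from Section \ref{general_weak_convergence_theory}. The strategy has three steps: first, verify \hyperlink{CWCM}{CWCM} for the candidate zero-mean Gaussian limit; second, verify that $\mathcal{F}$ is totally bounded and that $\{\mathbb{G}_N'\}$ is \hyperlink{conditional_AEC}{conditionally AEC}, both with respect to the $L_2(P_y)$ semimetric $\rho$; third, feed these into Corollary \ref{cor_uniformly_continuous_limit_process} to obtain conclusions (i) and (ii) simultaneously.

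First I would check that condition \hyperlink{A2_stella}{A2*} upgrades to condition \hyperlink{A2}{A2}, so that Lemma \ref{lem_marginal_convergence_Poisson_process_C} becomes applicable. Condition \hyperlink{F1}{F1} supplies an envelope $F$ with $P_{y}^{*}F^{2}<\infty$, whence $P_{y}f^{2}\le P_{y}^{*}F^{2}<\infty$ for every $f\in\mathcal{F}$; moreover, since the CPS designs are measurable, the first (unlabelled) lemma of this section guarantees that each $\mathbf{p}_{N}$ is a measurable function of $\mathbf{X}_{N}$, while each $f\in\mathcal{F}$ is measurable by hypothesis. Remark \ref{remark_condizione_A2_stella} then yields condition \hyperlink{A2}{A2}. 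With \hyperlink{A0}{A0}, \hyperlink{A1}{A1} and \hyperlink{A2}{A2} in hand, Lemma \ref{lem_marginal_convergence_Poisson_process_C} gives \hyperlink{CWCM}{CWCM} with limiting characteristic function $\exp(-\frac{1}{2}\mathbf{t}^{\intercal}\Sigma'(\mathbf{f})\mathbf{t})$; that is, the finite-dimensional marginals of $\{\mathbb{G}_{N}'f\}$ converge conditionally to those of a zero-mean Gaussian process $\{\mathbb{G}'f:f\in\mathcal{F}\}$ whose covariance function is the positive semidefinite $\Sigma'$.

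Next I would invoke Lemma \ref{AEC_case_C}, whose hypotheses \hyperlink{A2_stella}{A2*}, \hyperlink{GC}{GC} and \hyperlink{F1}{F1} are all assumed here. Its part (i) gives that $\mathcal{F}$ is totally bounded with respect to $\rho$, and its part (ii) gives $E_{d}\lVert\mathbb{G}_{N}'\rVert_{\mathcal{F}_{\delta_{N}}}\overset{P*(as*)}{\rightarrow}0$ for every $\delta_{N}\downarrow 0$. Applying Markov's inequality to the design expectation yields $P_{d}\{\lVert\mathbb{G}_{N}'\rVert_{\mathcal{F}_{\delta_{N}}}>\epsilon\}\le\epsilon^{-1}E_{d}\lVert\mathbb{G}_{N}'\rVert_{\mathcal{F}_{\delta_{N}}}\overset{P*(as*)}{\rightarrow}0$, which is precisely condition (\ref{def_conditional_AEC}); by Remark \ref{remark_conditional_AEC} this is equivalent to \hyperlink{conditional_AEC}{conditional AEC} of $\{\mathbb{G}_{N}'\}$ with respect to $\rho$.

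Finally, the two preceding steps verify exactly the hypotheses of Corollary \ref{cor_uniformly_continuous_limit_process}: \hyperlink{CWCM}{CWCM} holds for the zero-mean Gaussian process $\{\mathbb{G}'f\}$, and there is a semimetric $\rho$ (the $L_2(P_y)$ semimetric) for which $\mathcal{F}$ is totally bounded and for which $\{\mathbb{G}_{N}'\}$ is \hyperlink{conditional_AEC}{conditionally AEC}. The corollary then delivers a Borel measurable and tight version of $\mathbb{G}'$ in $l^{\infty}(\mathcal{F})$ for which the displayed \hyperlink{opCWC}{opCWC} (\hyperlink{oasCWC}{oasCWC}) holds, which is conclusion (i), together with uniform $\rho$-continuity of its sample paths with probability one, which is conclusion (ii). Since the entire argument is an assembly of earlier results, no serious obstacle remains at this stage; the only points requiring genuine care are the measurability bookkeeping needed to pass from \hyperlink{A2_stella}{A2*} to \hyperlink{A2}{A2} (so that the limiting marginals are identified correctly) and the elementary Markov step that converts the $L_1$-type bound of Lemma \ref{AEC_case_C} into the probability statement that defines \hyperlink{conditional_AEC}{conditional AEC}.
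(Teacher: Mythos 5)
Your proposal is correct and follows essentially the same route as the paper: verify \hyperlink{CWCM}{CWCM} via Lemma \ref{lem_marginal_convergence_Poisson_process_C} together with Remark \ref{remark_condizione_A2_stella}, obtain total boundedness and \hyperlink{conditional_AEC}{conditional AEC} from Lemma \ref{AEC_case_C}, and conclude with Corollary \ref{cor_uniformly_continuous_limit_process}. The only difference is that you spell out the measurability bookkeeping behind the passage from \hyperlink{A2_stella}{A2$^{*}$} to \hyperlink{A2}{A2} and the Markov step converting the $E_{d}$-bound into the probability statement defining \hyperlink{conditional_AEC}{conditional AEC}, both of which the paper leaves implicit.
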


\begin{proof}
Assumptions \hyperlink{A0}{A0}, \hyperlink{A1}{A1}, \hyperlink{A2_stella}{A2$^{*}$} make sure that \hyperlink{CWCM}{CWCM} holds for some zero-mean Gaussian limit process with covariance function given by $\Sigma'$ (see Lemma \ref{lem_marginal_convergence_Poisson_process_C} and Remark \ref{remark_condizione_A2_stella}), while assumptions \hyperlink{A2_stella}{A2$^{*}$}, \hyperlink{GC}{GC} and \hyperlink{F1}{F1} imply that $\mathcal{F}$ is totally bounded w.r.t. $\rho$ and that $\{\mathbb{G}_{N}'\}_{N=1}^{\infty}$ is \hyperlink{conditional_AEC}{conditionally AEC} w.r.t. $\rho$ (see Lemma \ref{AEC_case_C}). Both conclusions of the theorem follow now from Corollary \ref{cor_uniformly_continuous_limit_process}.
\end{proof}

\begin{thm}[Unconditional weak convergence]\label{unconditional_convergence}
Let $\{\mathbf{S}_{N}^{R}\}_{N=1}^{\infty}$, $\{\mathbf{p}_{N}\}_{N=1}^{\infty}$, $\mathcal{F}$ and $\{\mathbb{G}_{N}'\}_{N=1}^{\infty}:=\{\{\mathbb{G}_{N}'f: f\in\mathcal{F}\}\}_{N=1}^{\infty}$ be defined as in Lemma \ref{lem_marginal_convergence_Poisson_process_C}. Assume that conditions \hyperlink{A0}{A0}, \hyperlink{A1}{A1}, \hyperlink{A2_stella}{A2$^{*}$}, \hyperlink{F1}{F1} and \hyperlink{PM}{PM} are satisfied. Then it follows that 
\begin{itemize}
\item[(i) ] there exists zero-mean Gaussian process $\{\mathbb{G}'f:f\in\mathcal{F}\}$ with covariance function given by $\Sigma'$ which is a Borel measurable and tight mapping from some probability space into $l^{\infty}(\mathcal{F})$ such that
\[\mathbb{G}_{N}'\rightsquigarrow\mathbb{G}'\quad\text{ in }l^{\infty}(\mathcal{F});\]
\item[(ii) ] the sample paths $f\mapsto\mathbb{G}'f$ are uniformly continuous w.r.t. the $L_{2}(P_{y})$ semimetric $\rho(f,g):=[P_{y}(f-g)^{2}]^{1/2}$ with probability $1$.
\end{itemize}
\end{thm}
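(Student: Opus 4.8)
\emph{Proof plan.} The plan is to obtain conclusion (i) by upgrading the \emph{conditional} weak convergence that is already available under the present hypotheses to \emph{unconditional} weak convergence, the passage being powered entirely by the measurability that condition \hyperlink{PM}{PM} supplies. I would therefore split the argument into two stages: first re-establishing conditional weak convergence (which also yields conclusion (ii) at no extra cost), and then turning it into unconditional weak convergence.

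For the first stage I would note that, although Theorem \ref{opCWC_case_A} is phrased with assumption \hyperlink{GC}{GC} rather than \hyperlink{PM}{PM}, the present hypotheses still imply it: by Remark \ref{condition_PM} condition \hyperlink{PM}{PM} entails the measurability condition \hyperlink{M1}{M1}, and by Remark \ref{rem_ass_M} conditions \hyperlink{M1}{M1} and \hyperlink{F1}{F1} together imply \hyperlink{GC}{GC}. Hence \hyperlink{A0}{A0}, \hyperlink{A1}{A1}, \hyperlink{A2_stella}{A2$^{*}$}, \hyperlink{GC}{GC} and \hyperlink{F1}{F1} all hold, Theorem \ref{opCWC_case_A} applies, and it produces a tight, Borel measurable, zero-mean Gaussian process $\mathbb{G}'$ with covariance function $\Sigma'$ for which \hyperlink{opCWC}{opCWC} (\hyperlink{oasCWC}{oasCWC}) holds and whose sample paths are uniformly $\rho$-continuous with probability $1$; the latter is exactly conclusion (ii). In passing, by Theorem \ref{Theorem_CAT_characterizations} and Lemma \ref{lem_marginal_convergence_Poisson_process_C} one also has \hyperlink{CAT}{CAT} and \hyperlink{CWCM}{CWCM} at one's disposal.

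For the second stage the crucial observation is that \hyperlink{PM}{PM} renders the suprema $\sup_{f,g\in\mathcal{G}}|\mathbb{G}_{N}'f-\mathbb{G}_{N}'g|$ measurable for every subset $\mathcal{G}\subseteq\mathcal{F}$: for fixed $\mathbf{S}_{N}^{R}$ and $\mathbf{Y}_{N}$ the map $f\mapsto\mathbb{G}_{N}'f$ depends on $f$ only through the finite vector $(f(Y_{1}),\dots,f(Y_{N}))$ and is continuous in it, hence continuous under pointwise limits, so by the standard pointwise-measurable-class argument every such supremum coincides with the supremum over the countable set $\mathcal{G}$ furnished by \hyperlink{PM}{PM} and is therefore measurable. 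With this in hand I would invoke Remark \ref{rem_measurable_suprema}: the $P_{d}$-probabilities occurring in (\ref{AEC_general_definition}) and (\ref{FA_general_definition}) now become proper conditional probabilities, so integrating the equicontinuity inequality of Theorem \ref{Theorem_CAT_characterizations}(ii) against the unconditional law and applying bounded convergence to the remainder $\widetilde{C}_{N}\overset{P(as)}{\rightarrow}0$ (each conditional probability being bounded by $1$) yields unconditional asymptotic $\rho$-equicontinuity. Since \hyperlink{CWCM}{CWCM} together with bounded convergence likewise upgrades the conditional characteristic-function convergence to convergence of the unconditional finite-dimensional characteristic functions, and since $\mathcal{F}$ is totally bounded w.r.t. $\rho$ by Lemma \ref{AEC_case_C}(i), Theorems 1.5.7 and 1.5.4 on pages 35--37 in \cite{vdVW} combine to give unconditional asymptotic tightness and then $\mathbb{G}_{N}'\rightsquigarrow\mathbb{G}'$ in $l^{\infty}(\mathcal{F})$, which is conclusion (i).

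The main obstacle is precisely this measurability bookkeeping, as the discussion preceding the theorem makes clear: without \hyperlink{PM}{PM} the probabilities in (\ref{AEC_general_definition}) and (\ref{FA_general_definition}) need not be genuine conditional probabilities, $\lVert\mathbb{G}_{N}'\rVert_{\mathcal{F}_{\delta_{N}}}$ can differ from its measurable majorant with positive inner probability, and the conditional-to-unconditional passage may then break down. The real work is thus concentrated in confirming that the pointwise-approximation property \hyperlink{PM}{PM} truly reduces every relevant supremum to a supremum over the countable class $\mathcal{G}$; once that is settled, the remaining steps are a routine application of the bounded convergence theorem and the unconditional tightness machinery of \cite{vdVW}.
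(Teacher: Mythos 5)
Your proposal is correct and follows essentially the same route as the paper: deduce \hyperlink{GC}{GC} from \hyperlink{F1}{F1} and \hyperlink{PM}{PM} via Remarks \ref{rem_ass_M} and \ref{condition_PM}, apply Theorem \ref{opCWC_case_A} to get conditional weak convergence and conclusion (ii), and then use the measurability of the suprema guaranteed by \hyperlink{PM}{PM} to invoke Remark \ref{rem_measurable_suprema} and upgrade to unconditional weak convergence. Your second stage merely unpacks explicitly the argument that the paper delegates to that remark (CAT becoming genuine asymptotic tightness, CWCM implying unconditional marginal convergence, and Theorem 1.5.4 of \cite{vdVW} closing the loop), so the two proofs coincide in substance.
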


\begin{proof}
Remark \ref{rem_ass_M} and Remark \ref{condition_PM} show that assumption \hyperlink{F1}{F1} along with assumption \hyperlink{PM}{PM} imply assumption \hyperlink{GC}{GC}. The conditions of the present theorem are therefore stronger than the conditions of Theorem \ref{opCWC_case_A}, and the conclusion of the present theorem follows therefore from Remark \ref{rem_measurable_suprema} (note that condition \hyperlink{PM}{PM} implies measurability of the suprema in Remark \ref{rem_measurable_suprema}).
\end{proof}

The following corollary establishes joint weak convergence for the sequence of \hyperlink{HTEP}{HTEP}s and the classical sequence of $\mathcal{F}$-indexed i.i.d. empirical processes given by 
\begin{equation}\label{def_classical_empirical_process}
\mathbb{G}_{N}f:=\frac{1}{\sqrt{N}}\sum_{i=1}^{N}(f(Y_{i})-P_{y}f),\quad f\in\mathcal{F}.
\end{equation}

\begin{cor}[Joint weak convergence]\label{corollary_joint_weak_convergence_C}
Under the assumptions of Theorem \ref{unconditional_convergence} it follows that
\[(\mathbb{G}_{N}, \mathbb{G}_{N}')\rightsquigarrow(\mathbb{G}, \mathbb{G}')\text{ in }l^{\infty}(\mathcal{F})\times l^{\infty}(\mathcal{F}),\]
where $\mathbb{G}'$ is defined as in Theorem \ref{opCWC_case_A} (or Theorem \ref{unconditional_convergence}), $\mathbb{G}_{N}$ is the classical $\mathcal{F}$-indexed empirical process defined in (\ref{def_classical_empirical_process}), and where $\mathbb{G}$ is a Borel measurable and tight $P_{y}$-Brownian Bridge which is independent from $\mathbb{G}'$.
\end{cor}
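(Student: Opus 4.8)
The plan is to obtain this corollary as an immediate application of Theorem~\ref{teorema_generale_joint_weak_convergence}, taking the sequence $\{\mathbb{H}_N\}_{N=1}^\infty$ there to be the classical empirical process $\{\mathbb{G}_N\}_{N=1}^\infty$ of (\ref{def_classical_empirical_process}) and the sequence $\{\mathbb{H}_N'\}_{N=1}^\infty$ to be the \hyperlink{HTEP}{HTEP} sequence $\{\mathbb{G}_N'\}_{N=1}^\infty$. Three hypotheses must then be checked. First, I would observe that it is immediate from (\ref{def_classical_empirical_process}) that each $\mathbb{G}_N$ depends on the sample points $\omega\in\Omega_{y,x}^\infty\times\Omega_d$ only through $\mathbf{Y}_N$ (and hence only through $\mathbf{Y}_N$ and $\mathbf{X}_N$), which is precisely the structural requirement imposed on $\{\mathbb{H}_N\}_{N=1}^\infty$ in Theorem~\ref{teorema_generale_joint_weak_convergence}.

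Second, I would establish the unconditional weak convergence $\mathbb{G}_N\rightsquigarrow\mathbb{G}$ in $l^\infty(\mathcal{F})$, where $\mathbb{G}$ is the Borel measurable and tight $P_y$-Brownian bridge with covariance $(f,g)\mapsto P_y(fg)-P_yf\,P_yg$. This is the classical Donsker theorem. The assumptions of Theorem~\ref{unconditional_convergence} include \hyperlink{F1}{F1} and \hyperlink{PM}{PM}; by Remark~\ref{condition_PM} condition \hyperlink{PM}{PM} implies both \hyperlink{M1}{M1} and \hyperlink{M2}{M2}, and by Remark~\ref{rem_ass_M} condition \hyperlink{F1}{F1} together with \hyperlink{M1}{M1} yields \hyperlink{GC}{GC}, while \hyperlink{F1}{F1}, \hyperlink{GC}{GC} and \hyperlink{M2}{M2} together make $\mathcal{F}$ a $P_y$-Donsker class with the stated Brownian-bridge limit.

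Third, I would note that the \hyperlink{opCWC}{opCWC} required in Theorem~\ref{teorema_generale_joint_weak_convergence} holds for $\{\mathbb{G}_N'\}_{N=1}^\infty$. Indeed, the same argument (via Remarks~\ref{rem_ass_M} and \ref{condition_PM}) shows that \hyperlink{F1}{F1} and \hyperlink{PM}{PM} imply \hyperlink{GC}{GC}, so the assumptions of Theorem~\ref{unconditional_convergence} are stronger than those of Theorem~\ref{opCWC_case_A}; the latter therefore yields \hyperlink{oasCWC}{oasCWC}, which a fortiori gives \hyperlink{opCWC}{opCWC}. With all three hypotheses in hand, Theorem~\ref{teorema_generale_joint_weak_convergence} delivers $(\mathbb{G}_N,\mathbb{G}_N')\rightsquigarrow(\mathbb{G},\mathbb{G}')$ in $l^\infty(\mathcal{F})\times l^\infty(\mathcal{F})$ with $\mathbb{G}$ and $\mathbb{G}'$ independent, and the limit $\mathbb{G}'$ is exactly the Gaussian process produced by Theorem~\ref{opCWC_case_A} and Theorem~\ref{unconditional_convergence}.

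The proof is thus essentially bookkeeping, and there is no serious analytic obstacle; the one step needing genuine care is the second, where the measurability machinery of Remarks~\ref{rem_ass_M} and \ref{condition_PM} must be threaded together to certify both the Donsker property of $\mathcal{F}$ and the precise identification of the limit as a $P_y$-Brownian bridge. The independence of $\mathbb{G}$ and $\mathbb{G}'$ is automatic from Theorem~\ref{teorema_generale_joint_weak_convergence} and reflects the heuristic that $\mathbb{G}_N$ is a functional of the population data alone while, conditionally on the data, $\mathbb{G}_N'$ fluctuates toward a Gaussian limit whose law does not depend on the data.
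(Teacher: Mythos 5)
Your proposal is correct and follows exactly the paper's own route: verify that the hypotheses of Theorem \ref{teorema_generale_joint_weak_convergence} hold by noting that $\mathbb{G}_{N}$ depends only on the data, that \hyperlink{F1}{F1} and \hyperlink{PM}{PM} (via Remarks \ref{rem_ass_M} and \ref{condition_PM}) make $\mathcal{F}$ a $P_{y}$-Donsker class, and that the assumptions of Theorem \ref{unconditional_convergence} are stronger than those of Theorem \ref{opCWC_case_A}, which deliver \hyperlink{opCWC}{opCWC}. Your write-up is in fact somewhat more explicit about the bookkeeping than the paper's two-sentence proof, but the argument is the same.
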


\begin{proof}
The assumptions of Theorem \ref{unconditional_convergence} are stronger than those of Theorem \ref{opCWC_case_A} (which imply \hyperlink{opCWC}{opCWC}) and they imply that $\mathcal{F}$ is a $P_{y}$-Donsker class (see Remark \ref{rem_ass_M} and Remark \ref{condition_PM}). The proof of the corollary follows now from an application of Theorem \ref{teorema_generale_joint_weak_convergence}. 
\end{proof}

%%%%%%%%%%%%%%%%%%%%%%%%%%%%%%%%%%
%%%%%%%%%%%%%%%%%%%%%%%%%%%%%%%%%%
%%%%%%%%%%%%%%%%%%%%%%%%%%%%%%%%%%
%%%%%%%% RIVISTO FIN QUI 10 GIUGNO 2019 %%%%%%%%%%
%%%%%%%%%%%%%%%%%%%%%%%%%%%%%%%%%%
%%%%%%%%%%%%%%%%%%%%%%%%%%%%%%%%%%
%%%%%%%%%%%%%%%%%%%%%%%%%%%%%%%%%%

\subsection{CPS designs with first order sample inclusion probabilities proportional to some size variable which might take on arbitrarily small values}

This subsection treats the case where the first order sample inclusion probabilities are proportional to some size variable which can take on values arbitrarily close to zero. Note that this case is not covered by the theorems given in the previous subsection because assumptions \hyperlink{A0}{A0} and \hyperlink{A2_stella}{A2$^{*}$} imply that the first order sample inclusion probabilities are bounded away from zero with probability tending to $1$ or eventually almost surely (see Result \ref{Hajek_result_1} in Section \ref{rejective_sampling_review}). So, let $w:\mathcal{X}\mapsto(0,\infty)$ be a mapping such that $w(X_{i})$ can be interpreted as the "size" of the $i$th population unit. Throughout this subsection it will be assumed that the first order sample inclusion probabilities are defined as
\begin{equation}\label{first_order_Poisson_special_case}
\pi_{i,N}:=\min\left\{c_{N}(X_{1}, X_{2},\dots, X_{N}) \frac{w(X_{i})}{\sum_{j=1}^{N}w(X_{j})}; 1\right\},\quad i=1,2,\dots, N,
\end{equation}
where $c_{N}:\mathcal{X}^{N}\mapsto(0,\infty)$ is a function which makes sure that the expected sample size equals the value taken on by some other integer-valued function $n_{N}:\mathcal{X}^{N}\mapsto\{1,2,\dots, N\}$ (in many applications $\{n_{N}\}_{N=1}^{\infty}$ is simply a deterministic sequence of positive integers), i.e. $c_{N}$ makes sure that
\begin{equation}\label{expected_sample_size}
\sum_{i=1}^{N}\pi_{i,N}:=\sum_{i=1}^{N}\min\left\{c_{N}\frac{w(X_{i})}{\sum_{j=1}^{N}w(X_{j})}; 1\right\} =n_{N}.
\end{equation} 
It is not difficult to show that the function $c_{N}$ is well defined, i.e. that for every $n_{N}\in[0,N]$ there exists a unique positive constant $c_{N}$ such that equation (\ref{expected_sample_size}) holds. Moreover, under the assumptions 
\begin{itemize}
\item[\hypertarget{B0}{\textbf{B0}}) ] $n_{N}:\mathcal{X}^{N}\mapsto[0,N]$ is a measurable function and the sequence of expected sample sizes $\{n_{N}\}_{N=1}^{\infty}$ is such that
\[\frac{n_{N}}{N}\overset{P(as)}{\rightarrow}\alpha\in(0,1),\]
\item[\hypertarget{B1}{\textbf{B1}}) ] $w:\mathcal{X}\mapsto(0,\infty)$ is a measurable function such that $Ew(X_{1})<\infty$,
\end{itemize}
it can also be shown that $c_{N}$ is measurable and that $c_{N}/N\rightarrow \theta$ in probability (almost surely), where $\theta$ is the unique (positive) constant such that
\[E\min\left\{\frac{\theta w(X_{1})}{Ew(X_{2})}; 1\right\}=\alpha.\]
The details of the proof of the latter claim are left to the reader.

Now, in order obtain weak convergence theorems for the case where the first order sample inclusion probabilities are defined as in (\ref{first_order_Poisson_special_case}) it will be convenient to proceed as in Subsection 3.2 of \cite{Pasquazzi_2019} and to place restrictions on the class of functions
\[\mathcal{F}/w_{\theta}:=\{f/w_{\theta}: f\in\mathcal{F}\},\]
where $\mathcal{F}$ is the original class of interest, and where
\[w_{\theta}(X_{1}):=\min\{w(X_{1}), Ew(X_{1})/\theta\}.\]
Note that the domain of the members of the class $\mathcal{F}/w_{\theta}$ is the range of the random vectors $(Y_{i},X_{i})$ (which is assumed to be $\mathcal{Y}\times\mathcal{X}$), and that the value taken on by $f/w_{\theta}\in \mathcal{F}/w_{\theta}$ at a given realization of the random vector $(Y_{i},X_{i})$ is given by $f/w_{\theta}(Y_{i},X_{i}):=f(Y_{i})/w_{\theta}(X_{i})$.

The following lemma establishes \hyperlink{CWCM}{CWCM} for the \hyperlink{HTEP}{HTEP} sequence for the case where the first order sample inclusion probabilities are defined as in (\ref{first_order_Poisson_special_case}).

\begin{lem}[\hyperlink{CWCM}{CWCM}]\label{marginal_convergence_case_C_B}
Let $\{\mathbf{\underline{\pi}}_{N}\}_{N=1}^{\infty}$ be the sequence of vectors of first order sample inclusion probabilities for a sequence of CPS designs and let $\{\mathbf{p}_{N}\}_{N=1}^{\infty}$ be the sequence of vectors of first order sample inclusion probabilities for the corresponding sequence of canonical Poisson sampling designs. Assume that the components of each vector $\mathbf{\underline{\pi}}_{N}$ are defined as in (\ref{first_order_Poisson_special_case}) and that conditions \hyperlink{B0}{B0}, \hyperlink{B1}{B1} and condition 
\begin{itemize}
\item[\hypertarget{B2}{\textbf{B2}}) ] the members of $\mathcal{F}/w_{\theta}$ are square integrable, i.e. $E[f(Y_{1})/w_{\theta}(X_{1})]^{2}<\infty$ for every $f\in\mathcal{F}$
\end{itemize}
hold. Then it follows that conditions \hyperlink{A0}{A0}, \hyperlink{A1}{A1}, and \hyperlink{A2}{A2} of Lemma \ref{lem_marginal_convergence_Poisson_process_C} are satisfied and that the covariance function $\Sigma'$ in condition \hyperlink{A1}{A1} is given by
\begin{equation}\label{covariance_function_case_B}
\begin{split}
\Sigma'(f,g):=E&\left\{\frac{Ew(X_{2})-\theta w_{\theta}(X_{1})}{\theta w_{\theta}(X_{1})}\left[f(Y_{1})-R(f)\frac{\theta w_{\theta}(X_{1})}{Ew(X_{2})}\right]\times\right.\\
&\quad\left.\times\left[g(Y_{1})-R(g)\frac{\theta w_{\theta}(X_{1})}{Ew(X_{2})}\right]\right\},\quad f,g\in\mathcal{F},
\end{split}
\end{equation}
with
\[R(f):=\frac{Ef(Y_{1})\left(1-\frac{\theta w_{\theta}(X_{1})}{Ew(X_{2})}\right)}{E\frac{\theta w_{\theta}(X_{1})}{Ew(X_{2})}\left(1-\frac{\theta w_{\theta}(X_{1})}{Ew(X_{2})}\right)},\quad f\in\mathcal{F}.\]
\end{lem}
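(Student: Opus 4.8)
The plan is to verify \hyperlink{A0}{A0}, \hyperlink{A1}{A1} and \hyperlink{A2}{A2} by reducing every triangular-array average to an i.i.d.\ average to which the strong law of large numbers (SLLN) applies. Write $\mu := Ew(X_1) = Ew(X_2) \in (0,\infty)$ (finite and positive by \hyperlink{B1}{B1}) and introduce the limiting inclusion probabilities
\[q_i := \min\left\{\frac{\theta w(X_i)}{\mu};\, 1\right\} = \frac{\theta w_\theta(X_i)}{\mu}, \qquad i = 1,2,\dots,\]
which are i.i.d., satisfy $0 < q_i \le 1$, and have $Eq_1 = \alpha$ by the defining equation for $\theta$. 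Starting from $c_N/N \to \theta$ (recalled just before the lemma) and $\frac1N\sum_{j=1}^N w(X_j)\to\mu$ (SLLN under \hyperlink{B1}{B1}), the definition (\ref{first_order_Poisson_special_case}) reads $\pi_{i,N} = \min\{a_N w(X_i);1\}$ with $a_N := (c_N/N)/(\frac1N\sum_j w(X_j)) \to \theta/\mu$, so the Lipschitz property of $u\mapsto\min\{u;1\}$ gives $\frac1N\sum_i|\pi_{i,N}-q_i| \le |a_N-\theta/\mu|\cdot\frac1N\sum_i w(X_i)\to0$. First I would establish \hyperlink{A0}{A0}: since $\pi\mapsto\pi(1-\pi)$ is Lipschitz on $[0,1]$, this $L_1$ bound transfers to give $\frac1N\sum_i\pi_{i,N}(1-\pi_{i,N})\to E[q_1(1-q_1)]$, and because $q_1>0$ a.s.\ while $P(q_1<1)>0$ (as $Eq_1=\alpha<1$), the limit is strictly positive; hence $d_{0,N}\to\infty$ and Result \ref{Hajek_result_1}(i) yields $d_N\to\infty$. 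With $d_N\to\infty$ in hand, parts (ii)--(iii) of Result \ref{Hajek_result_1} supply the uniform ratio bounds $\max_i|\pi_{i,N}/p_{i,N}-1|\to0$ and $\max_i|(1-\pi_{i,N})/(1-p_{i,N})-1|\to0$, which let me replace $p_{i,N}$ by $\pi_{i,N}$, and hence by $q_i$, everywhere in \hyperlink{A1}{A1} and \hyperlink{A2}{A2} up to a uniform $1+o(1)$ factor.

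For \hyperlink{A1}{A1} I would first show $R_N(f)\to R(f)$: writing $R_N(f)=(\frac1N\sum_i f(Y_i)(1-p_{i,N}))/(\frac1N\sum_i p_{i,N}(1-p_{i,N}))$, the replacements above plus the SLLN send the numerator to $E[f(Y_1)(1-q_1)]$ and the denominator to $E[q_1(1-q_1)]>0$, giving $R(f)$ as stated. Substituting $p_{i,N}\approx q_i$ and $R_N\approx R$ into the average in \hyperlink{A1}{A1} and applying the SLLN to the resulting i.i.d.\ sum yields
\[\frac1N\sum_i\frac{1-p_{i,N}}{p_{i,N}}\big[f(Y_i)-R_N(f)p_{i,N}\big]\big[g(Y_i)-R_N(g)p_{i,N}\big] \rightarrow E\left\{\frac{1-q_1}{q_1}[f(Y_1)-R(f)q_1][g(Y_1)-R(g)q_1]\right\},\]
and using $q_1=\theta w_\theta(X_1)/\mu$ together with $(1-q_1)/q_1=(\mu-\theta w_\theta(X_1))/(\theta w_\theta(X_1))$ identifies the right-hand side with $\Sigma'(f,g)$ in (\ref{covariance_function_case_B}). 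The integrability required for the SLLN here comes from \hyperlink{B2}{B2}: since $w_\theta\le\mu/\theta$ is bounded, $E[f^2/w_\theta]\le(\mu/\theta)E[(f/w_\theta)^2]<\infty$, and Cauchy--Schwarz then gives $E[|fg|/q_1]<\infty$.

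For the Lindeberg condition \hyperlink{A2}{A2} I would rescale. Setting $V_{i,N}:=\|\mathbf f(Y_i)-R_N(\mathbf f)p_{i,N}\|/p_{i,N}$, the $i$-th summand equals $p_{i,N}V_{i,N}^2\,I(V_{i,N}>\sqrt N\epsilon)\le V_{i,N}^2\,I(V_{i,N}>\sqrt N\epsilon)$ because $p_{i,N}\le1$; thus the awkward random threshold $p_{i,N}\sqrt N\epsilon$ becomes a clean threshold $\sqrt N\epsilon$ on the standardized variable. After the replacements, $V_{i,N}$ is asymptotically $\|\mathbf f(Y_i)/q_i-R(\mathbf f)\|=\|(\mu/\theta)\mathbf f(Y_i)/w_\theta(X_i)-R(\mathbf f)\|$, which has finite second moment by \hyperlink{B2}{B2}. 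The standard truncation argument then closes it: for fixed $M$ and all large $N$, $\frac1N\sum_i V_{i,N}^2 I(V_{i,N}>\sqrt N\epsilon)\le\frac1N\sum_i V_{i,N}^2 I(V_{i,N}>M)\to E[\|\mathbf f(Y_1)/q_1-R(\mathbf f)\|^2 I(\|\cdot\|>M)]$ by the SLLN, and letting $M\to\infty$ drives this to $0$.

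Since \hyperlink{A0}{A0}--\hyperlink{A2}{A2} are phrased in the $\overset{P(as)}{\rightarrow}$ convention, the almost-sure versions follow directly from the SLLN once $n_N/N\to\alpha$ almost surely, while the in-probability versions follow by the usual subsequence principle (every subsequence admits a further subsequence along which $n_N/N\to\alpha$, hence $c_N/N\to\theta$, almost surely, where the a.s.\ argument applies). I expect the main obstacle to be making the two successive replacements rigorous and uniform in the triangular-array setting: passing from $p_{i,N}$ to $\pi_{i,N}$ rests on the uniform ratio bounds of Result \ref{Hajek_result_1}, passing from $\pi_{i,N}$ to the i.i.d.\ limits $q_i$ rests on the data-dependent normalizations $c_N/N$ and $\frac1N\sum_j w(X_j)$, and both must be controlled simultaneously with $R_N\to R$ while the unbounded weights $1/q_i$ (which arise when $w(X_i)$ is small) are kept integrable through \hyperlink{B2}{B2}. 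The delicate bookkeeping is to confirm that the uniform $1+o(1)$ errors from Result \ref{Hajek_result_1} do not interact badly with these unbounded weights, i.e.\ that each such error multiplied by the average of a square-integrable quantity is asymptotically negligible.
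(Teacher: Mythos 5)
Your overall strategy coincides with the paper's: verify \hyperlink{A0}{A0} by showing $\frac1N\sum_i\pi_{i,N}(1-\pi_{i,N})$ converges to a positive constant and invoking Result~\ref{Hajek_result_1}(i), then use Result~\ref{Hajek_result_1}(ii)--(iii) to trade $p_{i,N}$ for $\pi_{i,N}$, approximate $\pi_{i,N}$ by the i.i.d.\ quantities $q_i=\theta w_{\theta}(X_i)/Ew(X_1)$, and finish \hyperlink{A1}{A1} and \hyperlink{A2}{A2} with the SLLN under \hyperlink{B2}{B2}. The identification of the limit covariance and of $R(f)$ is also the same.

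There is, however, one concrete gap. For the step from $\pi_{i,N}$ to $q_i$ you derive only the additive $L_1$ bound $\frac1N\sum_i|\pi_{i,N}-q_i|\le|a_N-\theta/\mu|\cdot\frac1N\sum_i w(X_i)\to0$, and then assert that this lets you replace $\pi_{i,N}$ by $q_i$ ``up to a uniform $1+o(1)$ factor.'' It does not: the summands in \hyperlink{A1}{A1} and \hyperlink{A2}{A2} carry the weight $1/p_{i,N}$, and $\bigl|1/\pi_{i,N}-1/q_i\bigr|=|q_i-\pi_{i,N}|/(\pi_{i,N}q_i)$ can be arbitrarily large on units with small $w(X_i)$ even when the average of $|q_i-\pi_{i,N}|$ is tiny (take $q_i\asymp\pi_{i,N}\asymp 1/N$), so the additive bound cannot control $\frac1N\sum_i f(Y_i)g(Y_i)(1/\pi_{i,N}-1/q_i)$. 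What is actually needed, and what the paper proves as its display (\ref{uniform_convergence_p_i}), is the uniform \emph{multiplicative} bound $\max_{1\le i\le N}|\pi_{i,N}/q_i-1|\overset{P(as)}{\to}0$; with that, $1/\pi_{i,N}=(1+o(1))/q_i$ uniformly in $i$ and the $o(1)$ factors out of every average, after which \hyperlink{B2}{B2} and the SLLN close \hyperlink{A1}{A1} and \hyperlink{A2}{A2} exactly as you describe. The good news is that the ratio bound is available from your own ingredients: since $\pi_{i,N}=\min\{a_N w(X_i);1\}$ and $q_i=\min\{(\theta/\mu)w(X_i);1\}$ with $a_N\to\theta/\mu$, a case analysis on which of the two minima is clipped shows the ratio always lies between $1\wedge\bigl(a_N\mu/\theta\bigr)$ and $1\vee\bigl(a_N\mu/\theta\bigr)$ (the paper reaches the same conclusion by comparing $w_N$ with $w_{\theta}$). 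You should replace the $L_1$ estimate by this uniform ratio estimate; as written, the ``uniform $1+o(1)$'' claim on which the rest of your argument leans is unsupported.
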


\begin{proof}
Define
\begin{equation}\label{definizione_w_N}
w_{N}(X_{i}):=\min\left\{w(X_{i});\frac{1}{c_{N}}\sum_{j=1}^{N}w(X_{j})\right\},\quad i=1,2,\dots, N,
\end{equation}
and note that
\begin{equation}\label{first_order_incl_w_N} 
\pi_{i,N}=\frac{c_{N}w_{N}(X_{i})}{\sum_{j=1}^{N}w(X_{j})},\quad i=1,2,\dots, N.
\end{equation}
Next, note that 
\[\left|\frac{w_{N}(X_{i})}{w_{\theta}(X_{i})}-1\right|=\frac{|w_{N}(X_{i})-w_{\theta}(X_{i})|}{w_{\theta}(X_{i})}\]
and that the right hand side is positive only if $w(X_{i})$ lies between $\sum_{j=1}^{N}w(X_{j})/c_{N}$ and $Ew(X_{1})/\theta$, in which case it is bounded by
\[\frac{1}{\min\left\{\frac{1}{c_{N}}\sum_{j=1}^{N}w(X_{j});\frac{1}{\theta}Ew(X_{1})\right\}}\left|\frac{1}{c_{N}}\sum_{j=1}^{N}w(X_{j})-\frac{1}{\theta}Ew(X_{1})\right|.\]
Since this bound does not depend on $i$, and since under assumptions \hyperlink{B0}{B0} and \hyperlink{B1}{B1} it goes to zero in probability (almost surely), it follows that
\begin{equation}\label{uniform_convergence_w_N_i}
\max_{1\leq i\leq N}\left|\frac{w_{N}(X_{i})}{w_{\theta}(X_{i})}-1\right|\overset{P(as)}{\rightarrow}0.
\end{equation}
In combination with (\ref{first_order_incl_w_N}) this yields
\begin{equation}\label{uniform_convergence_p_i} 
\max_{1\leq i\leq N}\left|\frac{\pi_{i,N}}{\theta w_{\theta}(X_{i})/Ew(X_{1})}-1\right|\overset{P(as)}{\rightarrow}0.
\end{equation}
Using this result it is easily seen that
\begin{equation}\label{limit_varianza_canonical_Poisson}
\begin{split}
\frac{1}{N}\sum_{i=1}^{N}\pi_{i,N}(1-\pi_{i,N})&=\frac{1}{N}\sum_{i=1}^{N}\pi_{i,N}-\frac{1}{N}\sum_{i=1}^{N}\pi_{i,N}^{2}\\
&\overset{P(as)}{\rightarrow}\frac{\theta Ew_{\theta}(X_{1})}{Ew(X_{1})}-\frac{\theta^{2} Ew_{\theta}^{2}(X_{1})}{[Ew(X_{1})]^{2}}.
\end{split}
\end{equation}
Since $0<w_{\theta}(X_{1})\leq Ew(X_{1})/\theta<\infty$, the limiting constant in the last line in the last display must be strictly positive unless $w_{\theta}(X_{1})=Ew(X_{1})/\theta$ with probability $1$. However, in the latter case it would follow that $\theta=\alpha=1$ which contradicts assumption \hyperlink{B0}{B0}. This proves that the limiting constant on the right side in (\ref{limit_varianza_canonical_Poisson}) is positive and hence that assumption \hyperlink{A0}{A0} holds with $\pi_{i,N}$ in place of $p_{i,N}$. From (i) in Result \ref{Hajek_result_1} in Section \ref{rejective_sampling_review} it follows that assumption \hyperlink{A0}{A0} in its original form must be satisfied as well. Actually, the previous argument shows more than that. In fact, it shows that under assumptions \hyperlink{B0}{B0} and \hyperlink{B1}{B1} 
\begin{equation}\label{positive_limit_d_N}
\frac{1}{N}\sum_{i=1}^{N}p_{i,N}(1-p_{i,N})\overset{P(as)}{\rightarrow}C\quad\text{ for some }C>0.
\end{equation}

Next, consider assumption \hyperlink{A1}{A1}. Using (\ref{uniform_convergence_p_i}), (\ref{positive_limit_d_N}) and assumption \hyperlink{B2}{B2} it is not difficult to show that assumption \hyperlink{A1}{A1} is also satisfied with the limiting covariance function $\Sigma'$ defined as in the statement of the lemma (the details of the proof are left to the reader).

Finally, it remains to show that the Lindeberg condition in assumption \hyperlink{A2}{A2} holds as well. Also this can be easily shown by using (\ref{uniform_convergence_p_i}) and assumption \hyperlink{B2}{B2} (the details are left to the reader).
\end{proof}

Having established sufficient conditions for conditional convergence of the marginal distributions it remains to deal with AEC and total boundedness. The next lemma deals with both issues. As already mentioned above, the underlying semimetric will be different from the $L_{2}(P_{y})$-semimetric $\rho$ which was used in the previous subsection. In fact, in the present setting it seems more convenient to use the semimetric
\begin{equation*}
\begin{split}
\rho_{w}(f,g)&:=\sqrt{E\left(\frac{f(Y_{1})-g(Y_{1})}{w_{\theta}(X_{1})}\right)^{2}}=\sqrt{P_{y,x}(f-g)^{2}/w_{\theta}^{2}},\quad f,g\in\mathcal{F}
\end{split}
\end{equation*}
in place of $\rho$. Note that $\rho_{w}$ can be viewed as the $L_{2}(P_{y,x})$-semimetric on the function class $\mathcal{F}/w_{\theta}$.

\begin{lem}[Total boundedness and \hyperlink{conditional_AEC}{conditional AEC}]\label{AEC_case_C_B}
Let $\{\mathbf{S}_{N}^{R}\}_{N=1}^{\infty}$ be the sequence of vectors of sample inclusion indicators for a sequence of measurable CPS designs, let $\mathcal{F}$ be a class of measurable functions $f:\mathcal{Y}\mapsto\mathbb{R}$ and let $\{\mathbb{G}_{N}'\}_{N=1}^{\infty}$ be the sequence of \hyperlink{HTEP}{HTEP}s corresponding to $\{\mathbf{S}_{N}^{R}\}_{N=1}^{\infty}$ and $\mathcal{F}$. Assume that the first order sample inclusion probabilities corresponding to each vector $\mathbf{S}_{N}^{R}$ are defined as in (\ref{first_order_Poisson_special_case}) and that conditions \hyperlink{B0}{B0} and \hyperlink{B1}{B1} hold. Moreover, assume that conditions
\begin{itemize}
\item[\hypertarget{GC_stella}{\textbf{GC$^{*}$}}) ] $(\mathcal{F}_{\infty}/w_{\theta})^{2}:=\{(f-g)^{2}/w_{\theta}^{2}:f,g\in\mathcal{F}\}$ is an outer almost sure $P_{y,x}$-Glivenko-Cantelli class;
\item[\hypertarget{F1_stella}{\textbf{F1$^{*}$}}) ] $\mathcal{F}$ has an envelope function $F$ such that $E[F^{*}(Y_{1})/w_{\theta}(X_{1})]^{2}<\infty$ and such that the uniform entropy condition
\begin{equation}\label{uniform_entropy_case_B}
\int_{0}^{\infty}\sup_{Q_{y,x}}\sqrt{\log N(\epsilon\lVert F/w_{\theta}\rVert_{L_{2}(Q_{y,x})}, \mathcal{F}/w_{\theta}, L_{2}(Q_{y,x}))}d\epsilon<\infty
\end{equation}
holds, where the supremum is taken over all finitely discrete probability measures $Q_{y,x}$ on $\mathcal{Y}\times\mathcal{X}$ such that\footnote{Note the abuse of notation: $\lVert F/w_{\theta}\rVert_{L_{2}(Q_{y,x})}$ and $\int [F(y)/w_{\theta}(x)]^{2}dQ_{y,x}(y,x)$ should actually be written as $\lVert (F\circ\phi_{y})/(w_{\theta}\circ\phi_{x)}\rVert_{L_{2}(Q_{y,x})}$ and $\int [F\circ\phi_{y}(y,x)/w_{\theta}\circ\phi_{x}(y,x)]^{2}dQ_{y,x}(y,x)$, respectively, with $\phi_{y}:\mathcal{Y}\times\mathcal{X}\mapsto\mathbb{R}$ and $\phi_{x}:\mathcal{Y}\times\mathcal{X}\mapsto\mathbb{R}$ defined as $\phi_{y}(y,x):=y$ and $\phi_{x}(y,x):=x$ for $(y,x)\in\mathcal{Y}\times\mathcal{X}$.\label{nota_fondo_pagina}} 
\[\lVert F/w_{\theta}\rVert_{L_{2}(Q_{y,x})}:=\int [F(y)/w_{\theta}(x)]^{2}dQ_{y,x}(y,x)>0;\]
\end{itemize}
hold. Then it follows that
\begin{itemize}
\item[(i) ] $\mathcal{F}$ is totally bounded w.r.t. $\rho_{w}$;
\item[(ii) ] 
\[E_{d}\lVert \mathbb{G}_{N}'\rVert_{\mathcal{F}_{\delta_{N}}^{w}}\overset{P*(as*)}{\rightarrow}0\quad\text{ for every }\delta_{N}\downarrow 0,\]
where $\mathcal{F}_{\delta}^{w}:=\{f-g:f,g\in \mathcal{F}\wedge \rho_{w}(f,g)<\delta\}$.
\end{itemize}
\end{lem}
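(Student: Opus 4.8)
The plan is to transport the proof of Lemma \ref{AEC_case_C} to the reweighted class $\mathcal{F}/w_{\theta}$, replacing the unweighted empirical semimetric $\rho_{N}$ used there by the weighted empirical semimetric
\[\rho_{w,N}(f,g):=\sqrt{\frac{1}{N}\sum_{i=1}^{N}\left[\frac{f(Y_{i})-g(Y_{i})}{w_{\theta}(X_{i})}\right]^{2}}=\sqrt{\mathbb{P}_{y,x,N}(f-g)^{2}/w_{\theta}^{2}},\]
where $\mathbb{P}_{y,x,N}$ denotes the empirical measure of the pairs $(Y_{i},X_{i})$. This is the natural choice here because, in contrast to the previous subsection, the first order sample inclusion probabilities in (\ref{first_order_Poisson_special_case}) are no longer bounded away from zero but are instead asymptotically proportional to $w_{\theta}(X_{i})$. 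Note that $\rho_{w,N}$ (respectively $\rho_{w}$) is precisely the $L_{2}(\mathbb{P}_{y,x,N})$ (respectively $L_{2}(P_{y,x})$) semimetric on $\mathcal{F}/w_{\theta}$. For part (i), since condition \hyperlink{F1_stella}{\textbf{F1$^{*}$}} supplies the uniform entropy condition together with a square integrable envelope for $\mathcal{F}/w_{\theta}$, total boundedness of $\mathcal{F}$ w.r.t. $\rho_{w}$ would follow from Problem 2.5.1 on page 133 in \cite{vdVW}, exactly as in part (i) of Lemma \ref{AEC_case_C}.

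For part (ii) I would first establish a conditional subgaussian inequality w.r.t. $\rho_{w,N}$. Writing $\mathbb{G}_{N}'f-\mathbb{G}_{N}'g=\sum_{i=1}^{N}Z_{i,N}(f,g)$ with $Z_{i,N}(f,g):=N^{-1/2}(S_{i,N}^{R}-\pi_{i,N})\pi_{i,N}^{-1}[f(Y_{i})-g(Y_{i})]$, the negative association of $\mathbf{S}_{N}^{R}$ (Theorem 2.8 in \cite{Joag-Dev_1983}), a Chernoff bound and Hoeffding's lemma (\cite{Hoeffding_1963}) give, just as in Lemma \ref{AEC_case_C},
\[E_{d}e^{\lambda(\mathbb{G}_{N}'f-\mathbb{G}_{N}'g)}\leq\exp\left\{\frac{\lambda^{2}}{8N}\sum_{i=1}^{N}\frac{[f(Y_{i})-g(Y_{i})]^{2}}{\pi_{i,N}^{2}}\right\}.\]
The crucial point is to absorb the now unbounded factors $\pi_{i,N}^{-2}$ into $\rho_{w,N}$. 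By the uniform approximation established in the proof of Lemma \ref{marginal_convergence_case_C_B} (see (\ref{uniform_convergence_p_i})), for every $\epsilon>0$ one has, with probability tending to $1$ (eventually almost surely), $\pi_{i,N}\geq(1-\epsilon)\theta w_{\theta}(X_{i})/Ew(X_{1})$ uniformly in $i$, whence $\pi_{i,N}^{-2}[f(Y_{i})-g(Y_{i})]^{2}\leq(1-\epsilon)^{-2}[Ew(X_{1})]^{2}\theta^{-2}[(f(Y_{i})-g(Y_{i}))/w_{\theta}(X_{i})]^{2}$. Summing over $i$ and optimizing over $\lambda>0$ would then yield
\[P_{d}\left\{|\mathbb{G}_{N}'f-\mathbb{G}_{N}'g|>x\right\}\leq 2e^{-Cx^{2}/\rho_{w,N}^{2}(f,g)}\quad\text{for every }f,g\in\mathcal{F}_{\delta_{N}}^{w},\]
with probability tending to $1$ (eventually almost surely), where $C=2(1-\epsilon)^{2}\theta^{2}/[Ew(X_{1})]^{2}$ is a \emph{deterministic} constant free of the sample point and of $N$.

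From here the argument would be identical to that of Lemma \ref{AEC_case_C}. The uniform entropy condition \hyperlink{F1_stella}{\textbf{F1$^{*}$}} ensures that $\mathcal{F}_{\delta_{N}}^{w}$ contains a countable subset $\mathcal{G}_{\delta_{N}}^{w}(\mathbf{Y}_{N},\mathbf{X}_{N})$ on which $\lVert\mathbb{G}_{N}'\rVert_{\mathcal{F}_{\delta_{N}}^{w}}$ is attained, so that the process is separable for the sample design distribution and the second part of Corollary 2.2.8 on page 101 in \cite{vdVW} applies, giving
\[E_{d}\lVert\mathbb{G}_{N}'\rVert_{\mathcal{F}_{\delta_{N}}^{w}}\leq K\int_{0}^{\infty}\sqrt{\log N(\epsilon,\mathcal{F}_{\delta_{N}}^{w},\rho_{w,N})}\,d\epsilon\]
with inner probability tending to $1$ (eventually inner almost surely) for a deterministic $K>0$. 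Using
\[N(\epsilon,\mathcal{F}_{\delta_{N}}^{w},\rho_{w,N})=N(\epsilon,(\mathcal{F}_{\delta_{N}}^{w})/w_{\theta},L_{2}(\mathbb{P}_{y,x,N}))\leq N^{2}(\epsilon/2,\mathcal{F}/w_{\theta},L_{2}(\mathbb{P}_{y,x,N})),\]
I would finally invoke conditions \hyperlink{GC_stella}{\textbf{GC$^{*}$}} and \hyperlink{F1_stella}{\textbf{F1$^{*}$}} to show that the entropy integral goes to zero outer almost surely, following verbatim the argument after display (2.5.3) in the proof of Theorem 2.5.2 on page 128 in \cite{vdVW}, now applied to $\mathcal{F}/w_{\theta}$ rather than to $\mathcal{F}$.

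The hard part will be the subgaussian step: producing a tail bound with a \emph{deterministic} constant when the $\pi_{i,N}$ are not bounded below. I expect the resolution to be structural rather than computational, namely that the weight $w_{\theta}^{2}(X_{i})$ hidden in $\rho_{w,N}$ is exactly what the factor $\pi_{i,N}^{-2}$ degenerates into, so that (\ref{uniform_convergence_p_i}) converts $\pi_{i,N}^{-2}$ into a constant multiple of $w_{\theta}^{-2}(X_{i})$ up to the harmless $(1-\epsilon)^{-2}$ factor, leaving a constant $C$ independent of the sample point and of $N$ that feeds cleanly into the chaining bound.
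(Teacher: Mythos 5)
Your proposal is correct and follows essentially the same route as the paper's proof: part (i) via Problem 2.5.1 in \cite{vdVW} applied to $\mathcal{F}/w_{\theta}$, and part (ii) via negative association of $\mathbf{S}_{N}^{R}$, a Chernoff bound and Hoeffding's lemma to obtain a conditional subgaussian inequality w.r.t. the weighted empirical semimetric with a deterministic constant, followed by chaining through Corollary 2.2.8 in \cite{vdVW} and the entropy argument of Theorem 2.5.2 there. The only cosmetic difference is that you absorb $\pi_{i,N}^{-2}$ into $w_{\theta}^{-2}(X_{i})$ via (\ref{uniform_convergence_p_i}) after applying Hoeffding's lemma, whereas the paper rewrites the summands in terms of $w_{N}(X_{i})$ and $c_{N}$ before bounding; both yield the same deterministic constant up to renaming.
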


\begin{proof}
Part (i) of the conclusion follows from condition \hyperlink{F1_stella}{F1$^{*}$} (see Problem 2.5.1 on page 133 in \cite{vdVW}).

The proof of part (ii) of the conclusion is almost the same as the proof of Lemma \ref{AEC_case_C}. The first step is to show that for arbitrary $\delta_{N}\downarrow 0$ the corresponding stochastic processes $\{\mathbb{G}_{N}'f:f\in\mathcal{F}_{\delta_{N}}^{w}\}$ are, with probability tending to $1$ (eventually almost surely), conditionally subgaussian w.r.t. to the empirical semimetric 
\begin{equation*}
\begin{split}
\rho_{N,w}(f,g)&:=\sqrt{\frac{1}{N}\sum_{i=1}^{N}\left[\frac{f(Y_{i})}{w_{\theta}(X_{i})}-\frac{g(Y_{i})}{w_{\theta}(X_{i})}\right]^{2}}\\
&:=\sqrt{\mathbb{P}_{y,x,N}((f/w_{\theta})-(g/w_{\theta}))^{2}},\quad\quad f,g\in\mathcal{F}_{\delta_{N}}^{w},
\end{split}
\end{equation*}
i.e. to show that there exists a constant $C>0$ (which does not depend on the sample points $\omega\in\Omega_{y,x}^{\infty}\times\Omega_{d}$ and neither on $N$) such that, with probability tending to $1$ (eventually almost surely),
\begin{equation}\label{subgaussian_inequality_case_B}
P_{d}\left\{|\mathbb{G}_{N}'f-\mathbb{G}_{N}'g|>x\right\}\leq 2 e^{-Cx^{2}/\rho_{N,w}^{2}(f,g)}\quad\text{for every }f,g\in\mathcal{F}_{\delta_{N}}^{w}
\end{equation}
(cfr. display (\ref{subgaussian_inequality})). To this aim, note that the difference $\mathbb{G}_{N}'f-\mathbb{G}_{N}'g$ can be written as
\begin{equation*}
\begin{split}
\mathbb{G}_{N}^{R}f-\mathbb{G}_{N}^{R}g&=\frac{w_{\theta}(X_{i})\sum_{j=1}^{N}w(X_{j})}{w_{N}(X_{i}) c_{N}\sqrt{N}}(S_{i,N}^{R}-\pi_{i,N})\left[\frac{f(Y_{i})}{w_{\theta}(X_{i})}-\frac{g(Y_{i})}{w_{\theta}(X_{i})}\right]\\
&:=\sum_{i=1}^{N}Z_{i,N},
\end{split}
\end{equation*}
where $w_{N}(X_{i})$ is defined as in (\ref{definizione_w_N}).
Then, note that $E_{d}Z_{i,N}=0$ for $i=1,2,\dots, N$, 
\[|\sqrt{N}Z_{i,N}|\leq\left(\max_{1\leq j\leq N}\frac{w_{\theta}(X_{j})}{w_{N}(X_{j})}\right)\frac{\sum_{j=1}^{N}w(X_{j})}{c_{N}}\left|\frac{f(Y_{i})}{w_{\theta}(X_{i})}-\frac{g(Y_{i})}{w_{\theta}(X_{i})}\right|,\]
and that with probability tending to $1$ (eventually almost surely) the right side in the last inequality is bounded by
\[\frac{1}{L_{\theta}}\left|\frac{f(Y_{i})}{w_{\theta}(X_{i})}-\frac{g(Y_{i})}{w_{\theta}(X_{i})}\right|,\]
where $L_{\theta}$ is a positive constant which depends only on $\theta$ (use (\ref{uniform_convergence_w_N_i}) along with the fact that assumptions \hyperlink{B0}{B0} and \hyperlink{B1}{B1} imply $c_{N}/N\overset{P(as)}{\rightarrow}\theta$). Thus, it follows by Hoeffding's lemma (see \cite{Hoeffding_1963}) that, with probability tending to $1$ (eventually almost surely),
\[E_{d}e^{\lambda Z_{i,N}(f,g)}\leq \exp\left\{\frac{\lambda^{2}}{8N L_{\theta}^{2}}\left[\frac{f(Y_{i})}{w_{\theta}(X_{i})}-\frac{g(Y_{i})}{w_{\theta}(X_{i})}\right]^{2}\right\}.\]
Now, as in the proof of Lemma \ref{AEC_case_C}, use the fact that the components of $\mathbf{S}_{N}^{R}$ are negatively associated to conclude that
\[P_{d}\left\{|\mathbb{G}_{N}'f-\mathbb{G}_{N}'g|>x\right\}\leq 2\exp\left\{-\lambda x+\frac{\lambda^{2}}{8 L_{\theta}^{2}}\rho_{N,w}^{2}(f,g)\right\}\]
with probability tending to $1$ (eventually almost surely). Optimizing the right side w.r.t. $\lambda$ yields then the subgaussian tail inequality in (\ref{subgaussian_inequality_case_B}) with $C=2L_{\theta}^{2}$. 

Next, note that Corollary 2.2.8 on page 101 in \cite{vdVW} can be applied also in the present case and conclude that, with probability tending to one (eventually almost surely),
\[E_{d} \left\lVert\mathbb{G}_{N}'\right\rVert_{\mathcal{F}_{\delta_{N}}^{w}}\leq K\int_{0}^{\infty}\sqrt{\log D(\epsilon,\mathcal{F}_{\delta_{N}}^{w}, \rho_{N,w})}d\epsilon\]
for some constant $K$ (which does not depend on the sample points $\omega\in\Omega_{y,x}^{\infty}\times\Omega_{d}$ and neither on $N$). Now note that
\begin{equation*}
\begin{split}
D(\epsilon,\mathcal{F}_{\delta_{N}}^{w}, \rho_{N,w})&\leq N(\epsilon/2,\mathcal{F}_{\delta_{N}}^{w}, \rho_{N,w})=N(\epsilon/2,\mathcal{F}_{\delta_{N}}^{w}/w_{\theta}, L_{2}(\mathbb{P}_{y,x, N}))
\end{split}
\end{equation*}
where $\mathcal{F}_{\delta_{N}}^{w}/w_{\theta}:=\{f/w_{\theta}:f\in\mathcal{F}_{\delta_{N}}^{w}\}$, so that the integral in the second last display is bounded by a constant multiple of
\[\int_{0}^{\infty}\sqrt{\log N(\epsilon,\mathcal{F}_{\delta_{N}}^{w}/w_{\theta}, L_{2}(\mathbb{P}_{y,x, N}))}d\epsilon.\]
The proof can now be completed by using assumptions \hyperlink{GC_stella}{GC$^{*}$} and \hyperlink{F1_stella}{F1$^{*}$} in order to show that this last integral goes to zero outer almost surely. Again, this can be done by the method used in the proof of Theorem 2.5.2 on page 127 in \cite{vdVW} (see the lines following display 2.5.3 on page 128 in \cite{vdVW}; see also Remark \ref{rem_ass_M_w} in order to see that assumption \hyperlink{GC_stella}{GC$^{*}$} can be replaced by a measurability condition).
\end{proof}

%%%%%%%%%%%%%%%%%%%%%%%%%%%%%%%%%%%%
%%%%%%%%%%%%%%%%%%%%%%%%%%%%%%%%%%%%
%%%%%%%%%%%%%%%%%%%%%%%%%%%%%%%%%%%%
%%%%% RIVISTO FIN QUI  MER 12 GIUGNO 2019 %%%%%%%%%
%%%%%%%%%%%%%%%%%%%%%%%%%%%%%%%%%%%%
%%%%%%%%%%%%%%%%%%%%%%%%%%%%%%%%%%%%
%%%%%%%%%%%%%%%%%%%%%%%%%%%%%%%%%%%%

\begin{rem}\label{rem_ass_M_w}
Assume that $w_{\theta}:\mathcal{X}\mapsto(0,\infty)$ is any measurable and uniformly bounded function. Then condition \hyperlink{F1_stella}{F1$^{*}$} together with condition
\begin{itemize}
\item[\hypertarget{M1_primo}{\textbf{M1'}}) ] $(\mathcal{F}_{\infty}/w_{\theta})^{2}:=\{(f-g)^{2}/w_{\theta}^{2}:f,g\in\mathcal{F}\}$ is a $P_{y}$-measurable class of functions (see Definition 2.3.3 on page 110 in \cite{vdVW}), i.e. the function
\[(\mathbf{Y}_{N},\mathbf{X}_{N})\mapsto \sup_{f,g\in\mathcal{F}} \left|\sum_{i=1}^{N}e_{i} \frac{[f(Y_{i})-g(Y_{i})]^{2}}{w_{\theta}^{2}(X_{i})}\right|\]
is measurable on the completion of $((\mathcal{Y}\times\mathcal{X})^{N}, (\mathcal{A}\times\mathcal{B})^{N}, P_{y,x}^{N})$ for every $N$ and for every $(e_{1}, e_{2},\dots, e_{N})\in\mathbb{R}^{N}$
\end{itemize}
imply condition \hyperlink{GC_stella}{GC$^{*}$} (cf. Remark \ref{rem_ass_M}). Of course, condition \hyperlink{PM}{PM} implies also condition \hyperlink{M1_primo}{M1'}.

%Moreover, condition GC$^{*}$ is certainly stronger than condition GC, and from the proof of Theorem 2.5.2 on page 127 in \cite{vdVW} it follows that conditions F1$^{*}$, GC$^{*}$ and
%\begin{itemize}
%\item[M2') ] for every $\delta>0$ the corresponding function class $\mathcal{F}_{\delta}^{w}/w_{\theta}$ is a $P_{y,x}$-measurable class of functions, i.e. the function
%\[(\mathbf{Y}_{N},\mathbf{X}_{N})\mapsto \sup_{f\in\mathcal{F}_{\delta}^{w}} \left|\sum_{i=1}^{N}e_{i} \frac{f(Y_{i})}{w_{\theta}(X_{i})}\right|\]
%is measurable on the completion of $((\mathcal{Y}\times\mathcal{X})^{N}, (\mathcal{A}\times\mathcal{B})^{N}, P_{y,x}^{N})$ for every $N$ and for every $(e_{1}, e_{2},\dots, e_{N})\in\mathbb{R}^{N}$
%\end{itemize}
%imply that $\mathcal{F}/w_{\theta}$ is a $P_{y,x}$-Donsker class (cf. Remark \ref{rem_ass_M}). By Example 2.10.10 on page 192 in \cite{vdVW} it follows therefore that conditions F1$^{*}$, GC$^{*}$ and M2' imply also that $\mathcal{F}$ is a $P_{y}$-Donsker class. Of course, condition PM implies also conditions M1' and M2' (cf. Remark \ref{condition_PM}). 
\end{rem}

\begin{rem}\label{equivalent_uniform_entropy}
Assume that $w_{\theta}:\mathcal{X}\mapsto(0,\infty)$ is any measurable and uniformly bounded function. Then, the uniform entropy conditions (\ref{uniform_entropy}) and (\ref{uniform_entropy_case_B}) are equivalent. To prove this claim, define the projections $\phi_{y}$ and $\phi_{x}$ as in footnote \ref{nota_fondo_pagina}, define $\mathcal{F}\circ\phi_{y}:=\{f\circ\phi_{y}:f\in\mathcal{F}\}$ and let $Q_{y}:=Q_{y,x}\circ\phi_{y}^{-1}$ for any probability measure $Q_{y,x}$ on $(\mathcal{Y},\mathcal{A})\times(\mathcal{X},\mathcal{B})$. Then note that $\lVert f\rVert_{L_{2}(Q_{y})}=\lVert f\circ\phi_{y}\rVert_{L_{2}(Q_{y,x})}$ for every measurable $f:\mathcal{Y}\mapsto\mathbb{R}$ and deduce that
\begin{equation}\label{equal_sup_1}
\sup_{Q_{y}}N(\epsilon\lVert F\rVert_{L_{2}(Q_{y})}, \mathcal{F}, L_{2}(Q_{y}))=\sup_{Q_{y,x}}N(\epsilon\lVert F\circ\phi_{y}\rVert_{L_{2}(Q_{y,x})}, \mathcal{F}\circ\phi_{y}, L_{2}(Q_{y,x})),
\end{equation}
where the supremum on the left side ranges over the set of all finitely discrete probability measures on $\mathcal{Y}$ such that $\lVert F\rVert_{L_{2}(Q_{y})}>0$, and where the supremum on right side ranges over the set of all finitely discrete probability measures on $\mathcal{Y}\times\mathcal{X}$ such that $\lVert F\circ\phi_{y}\rVert_{L_{2}(Q_{y,x})}>0$. 

Next, define for each finitely discrete probability measure $Q_{y,x}$ on $\mathcal{Y}\times\mathcal{X}$ a corresponding finitely discrete measure $R_{y,x}$ by setting
\[\frac{dR_{y,x}}{dQ_{y,x}}(y,x):=w_{\theta}(x)=w_{\theta}\circ\phi_{x}(y,x),\quad (y,x)\in\mathcal{Y}\times\mathcal{X}.\]
Since this density is strictly positive, it follows that the supports of $Q_{y,x}$ and $R_{y,x}$ must be the same. Moreover, it follows that 
\[\frac{dQ_{y,x}}{dR_{y,x}}(y,x)=\frac{1}{w_{\theta}\circ\phi_{x}(y,x)},\quad (y,x)\in\mathcal{Y}\times\mathcal{X}\]
which shows that the mapping $Q_{y,x}\mapsto R_{y,x}$ is a bijection between the set of all finitely discrete probability measures on $\mathcal{Y}\times \mathcal{X}$ and the set of all finitely discrete measures on $\mathcal{Y}\times \mathcal{X}$. Obviously, this bijection satisfies $Q_{y,x}(f\circ\phi_{y})=R_{y,x}[(f\circ\phi_{y})/(w_{\theta}\circ\phi_{x})]$ for every $f:\mathcal{Y}\mapsto\mathbb{R}$. Conclude that
\begin{equation}\label{equal_sup_2}
\begin{split}
&\sup_{Q_{y,x}} N(\epsilon\lVert F\circ\phi_{y}\rVert_{L_{2}(Q_{y,x})}, \mathcal{F}\circ\phi_{y}, L_{2}(Q_{y,x}))=\\
&=\sup_{R_{y,x}} N(\epsilon\lVert (F\circ\phi_{y})/(w_{\theta}\circ\phi_{x})\rVert_{L_{2}(R_{y,x})}, \mathcal{F}/w_{\theta}, L_{2}(R_{y,x}))\\
\end{split}
\end{equation}
where the supremum over $Q_{y,x}$ ranges over the set of all finitely discrete probability measures on $\mathcal{Y}\times \mathcal{X}$ such that $\lVert F\circ\phi_{y}\rVert_{L_{2}(Q_{y,x})}>0$, and where the supremum over $R_{y,x}$ ranges over the set of all finitely discrete measures on $\mathcal{Y}\times \mathcal{X}$ such that $\lVert (F\circ\phi_{y})/(w_{\theta}\circ\phi_{x})\rVert_{L_{2}(R_{y,x})}>0$. Next, note that
\begin{equation}\label{equal_sup_3}
\begin{split}
&\sup_{R_{y,x}} N(\epsilon\lVert (F\circ\phi_{y})/(w_{\theta}\circ\phi_{x})\rVert_{L_{2}(R_{y,x})}, \mathcal{F}/w_{\theta}, L_{2}(R_{y,x}))\\
&=\sup_{Q_{y,x}} N(\epsilon\lVert (F\circ\phi_{y})/(w_{\theta}\circ\phi_{x})\rVert_{L_{2}(Q_{y,x})}, \mathcal{F}/w_{\theta}, L_{2}(Q_{y,x}))
\end{split}
\end{equation}
where the supremum over $Q_{y,x}$ ranges over the set of all finitely discrete measures on $\mathcal{Y}\times \mathcal{X}$ such that $\lVert (F\circ\phi_{y})/(w_{\theta}\circ\phi_{x})\rVert_{L_{2}(Q_{y,x})}>0$ (see Problem 2.10.5 on page 204 in \citep{vdVW}). Now combine equations (\ref{equal_sup_1}), (\ref{equal_sup_2}) and (\ref{equal_sup_3}) to obtain
\[\sup_{Q_{y}}N(\epsilon\lVert F\rVert_{L_{2}(Q_{y})}, \mathcal{F}, L_{2}(Q_{y}))=\sup_{Q_{y,x}} N(\epsilon\lVert (F\circ\phi_{y})/(w_{\theta}\circ\phi_{x})\rVert_{L_{2}(Q_{y,x})}, \mathcal{F}/w_{\theta}, L_{2}(Q_{y,x}))\]
where, by an abuse of notation, the right side can be written as the integrand on the left side in (\ref{uniform_entropy_case_B}). This shows that the uniform entropy integrals in  (\ref{uniform_entropy}) and (\ref{uniform_entropy_case_B}) are actually the same.
\end{rem}

\begin{rem}\label{rem_condizione_F2}
Assume that $w_{\theta}:\mathcal{X}\mapsto(0,\infty)$ is any measurable and uniformly bounded function. Then condition \hyperlink{F1_stella}{F1$^{*}$} does obviously imply condition \hyperlink{B2}{B2}. Moreover, from Remark \ref{equivalent_uniform_entropy} it follows that condition \hyperlink{F1_stella}{F1$^{*}$} implies also condition \hyperlink{F1}{F1}. Since condition \hyperlink{GC_stella}{GC$^{*}$} is stronger than condition \hyperlink{GC}{GC}, it follows further that conditions \hyperlink{F1_stella}{F1$^{*}$}, \hyperlink{GC_stella}{GC$^{*}$} and \hyperlink{M2}{M2} imply that $\mathcal{F}$ is a $P_{y}$-Donsker class (cf. Remark \ref{rem_ass_M}).
\end{rem}

%Having established sufficient conditions for conditional convergence of the marginal distributions (see Lemma \ref{marginal_convergence_case_C_B}) and for \textit{conditional} AEC and total boundedness w.r.t. the semimetric $\rho_{w}$ (see Lemma \ref{AEC_case_C_B}), allows us now to proceed as in the previous subsection in order to obtain the following weak convergence results. 

\begin{thm}[conditional weak convergence]\label{opCWC_case_B_C}
Let $\{\mathbf{S}_{N}^{R}\}_{N=1}^{\infty}$ be the sequence of vectors of sample inclusion indicators corresponding to a sequence of measurable Poisson sampling designs, let $\mathcal{F}$ be a class of measurable functions $f:\mathcal{Y}\mapsto\mathbb{R}$ and let $\{\mathbb{G}_{N}'\}_{N=1}^{\infty}$ be the sequence of \hyperlink{HTEP}{HTEP}s corresponding to $\{\mathbf{S}_{N}^{R}\}_{N=1}^{\infty}$ and $\mathcal{F}$. Assume that the first order sample inclusion probabilities corresponding to each vector $\mathbf{S}_{N}^{R}$ are defined as in (\ref{first_order_Poisson_special_case}) and assume that conditions \hyperlink{B0}{B0}, \hyperlink{B1}{B1}, \hyperlink{F1_stella}{F1$^{*}$} and \hyperlink{GC_stella}{GC$^{*}$} are satisfied. Then it follows that
\begin{itemize}
\item[(i) ] there exists zero-mean Gaussian process $\{\mathbb{G}'f:f\in\mathcal{F}\}$ with covariance function given by $\Sigma'$ as defined in (\ref{covariance_function_case_B}) (or in assumption \hyperlink{A1}{A1}) which is a Borel measurable and tight mapping from some probability space into $l^{\infty}(\mathcal{F})$ such that
\[\sup_{h\in BL_{1}(l^{\infty}(\mathcal{F}))}\left|E_{d}h(\mathbb{G}_{N}')-Eh(\mathbb{G}')\right|\overset{P*(as*)}{\rightarrow}0,\]
\item[(ii) ] the sample paths $f\mapsto\mathbb{G}'f$ are uniformly continuous w.r.t. the semimetric $\rho_{w}(f,g):=[P_{y}(f-g)^{2}/w_{\theta}^{2}]^{1/2}$ with probability $1$.
\end{itemize}
\end{thm}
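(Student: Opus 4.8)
The plan is to assemble the conclusion exactly as in the proof of Theorem \ref{opCWC_case_A}, namely by verifying the hypotheses of Corollary \ref{cor_uniformly_continuous_limit_process}, but now with the semimetric $\rho_w$ in place of $\rho$ and with the starred assumptions replacing their unstarred counterparts. The two ingredients Corollary \ref{cor_uniformly_continuous_limit_process} requires are \hyperlink{CWCM}{CWCM} for some stochastic process $\{\mathbb{G}'f:f\in\mathcal{F}\}$ and the existence of a semimetric for which $\mathcal{F}$ is totally bounded and for which $\{\mathbb{G}_{N}'\}_{N=1}^{\infty}$ is \hyperlink{conditional_AEC}{conditionally AEC}. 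Both of these have essentially already been established in the two preceding lemmas, so the body of the proof is mostly a matter of citing them and checking that their hypotheses are implied by the hypotheses of the present theorem.

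First I would dispatch \hyperlink{CWCM}{CWCM}. By Lemma \ref{marginal_convergence_case_C_B}, conditions \hyperlink{B0}{B0}, \hyperlink{B1}{B1} together with \hyperlink{B2}{B2} imply that conditions \hyperlink{A0}{A0}, \hyperlink{A1}{A1} and \hyperlink{A2}{A2} of Lemma \ref{lem_marginal_convergence_Poisson_process_C} hold, with the limiting covariance function $\Sigma'$ given by (\ref{covariance_function_case_B}); Lemma \ref{lem_marginal_convergence_Poisson_process_C} then yields \hyperlink{CWCM}{CWCM} with a zero-mean Gaussian limit process whose covariance is $\Sigma'$. The only gap to fill is that the present theorem assumes \hyperlink{F1_stella}{F1$^{*}$} rather than \hyperlink{B2}{B2} directly, but by the first sentence of Remark \ref{rem_condizione_F2} condition \hyperlink{F1_stella}{F1$^{*}$} implies \hyperlink{B2}{B2}, so this step goes through.

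Next I would handle total boundedness and \hyperlink{conditional_AEC}{conditional AEC}. This is precisely the content of Lemma \ref{AEC_case_C_B}: under \hyperlink{B0}{B0}, \hyperlink{B1}{B1}, \hyperlink{GC_stella}{GC$^{*}$} and \hyperlink{F1_stella}{F1$^{*}$}, the class $\mathcal{F}$ is totally bounded with respect to $\rho_w$ and one has $E_{d}\lVert\mathbb{G}_{N}'\rVert_{\mathcal{F}^{w}_{\delta_{N}}}\overset{P*(as*)}{\rightarrow}0$ for every $\delta_N\downarrow 0$. By the characterization of \hyperlink{conditional_AEC}{conditional AEC} recorded in Remark \ref{remark_conditional_AEC} (applied with the semimetric $\rho_w$ and the associated difference class $\mathcal{F}^{w}_{\delta}$), this last convergence is equivalent to $\{\mathbb{G}_{N}'\}_{N=1}^{\infty}$ being \hyperlink{conditional_AEC}{conditionally AEC} w.r.t.\ $\rho_w$. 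All the hypotheses of Lemma \ref{AEC_case_C_B} are among the hypotheses of the present theorem, so this step is immediate.

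With \hyperlink{CWCM}{CWCM} and the existence of the semimetric $\rho_w$ (for which $\mathcal{F}$ is totally bounded and w.r.t.\ which the sequence is \hyperlink{conditional_AEC}{conditionally AEC}) both in hand, Corollary \ref{cor_uniformly_continuous_limit_process} applies verbatim and delivers both conclusions: part (i) gives a Borel measurable, tight version of $\mathbb{G}'$ in $l^{\infty}(\mathcal{F})$ satisfying \hyperlink{opCWC}{opCWC}/\hyperlink{oasCWC}{oasCWC} as in (\ref{CWC_H}), and part (ii) gives sample paths that are uniformly $\rho_w$-continuous with probability $1$. I do not anticipate a genuine obstacle here, since the hard analytic work (the Hoeffding/negative-association subgaussian bound and the entropy integral estimate) was already carried out in Lemma \ref{AEC_case_C_B}; the only thing to watch is bookkeeping, namely making sure the semimetric used throughout is consistently $\rho_w$ and that the implication \hyperlink{F1_stella}{F1$^{*}$}$\Rightarrow$\hyperlink{B2}{B2} from Remark \ref{rem_condizione_F2} is invoked so that Lemma \ref{marginal_convergence_case_C_B} is applicable. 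The proof is then essentially a three-line citation, mirroring the proof of Theorem \ref{opCWC_case_A}.
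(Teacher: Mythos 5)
Your proposal is correct and follows exactly the paper's own route: establish \hyperlink{B2}{B2} from \hyperlink{F1_stella}{F1$^{*}$} (via Remark \ref{rem_condizione_F2}, which is applicable because \hyperlink{B0}{B0} and \hyperlink{B1}{B1} make $w_{\theta}$ measurable and uniformly bounded), invoke Lemma \ref{marginal_convergence_case_C_B} and Lemma \ref{lem_marginal_convergence_Poisson_process_C} for \hyperlink{CWCM}{CWCM}, invoke Lemma \ref{AEC_case_C_B} for total boundedness and \hyperlink{conditional_AEC}{conditional AEC} w.r.t.\ $\rho_w$, and conclude with Corollary \ref{cor_uniformly_continuous_limit_process}. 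No gaps.
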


\begin{proof}
Assumptions \hyperlink{B0}{B0} and \hyperlink{B1}{B1} imply that the function $w_{\theta}$ is well defined and that it is measurable and uniformly bounded, and together with assumption \hyperlink{F1_stella}{F1$^{*}$} they imply also condition \hyperlink{B2}{B2}. From Lemma \ref{marginal_convergence_case_C_B} and Lemma \ref{lem_marginal_convergence_Poisson_process_C} it follows therefore that $\{\mathbb{G}_{N}'\}_{N=1}^{\infty}$ satisfies \hyperlink{CWCM}{CWCM} for some zero-mean Gaussian limit process with covariance function given by $\Sigma'$ as defined in (\ref{covariance_function_case_B}) (or in assumption \hyperlink{A1}{A1}). Moreover, Lemma \ref{AEC_case_C_B} shows that $\mathcal{F}$ is totally bounded w.r.t. $\rho_{w}$ and that $\{\mathbb{G}_{N}'\}_{N=1}^{\infty}$ is \hyperlink{conditional_AEC}{conditionally AEC} w.r.t. $\rho_{w}$. The two conclusions of the theorem follow now by Corollary \ref{cor_uniformly_continuous_limit_process}.
\end{proof}

\begin{thm}[Unconditional weak convergence]\label{unconditional_convergence_case_B_C}
Let $\{\mathbf{S}_{N}^{R}\}_{N=1}^{\infty}$, $\mathcal{F}$ and $\{\mathbb{G}_{N}'\}_{N=1}^{\infty}$ be defined as in Theorem \ref{opCWC_case_B_C}. Assume that the first order sample inclusion probabilities corresponding to each vector $\mathbf{S}_{N}^{R}$ are defined as in (\ref{first_order_Poisson_special_case}) and assume that conditions \hyperlink{B0}{B0}, \hyperlink{B1}{B1}, \hyperlink{F1_stella}{F1$^{*}$} and \hyperlink{PM}{PM} are satisfied. Then it follows that 
\begin{itemize}
\item[(i) ] there exists zero-mean Gaussian process $\{\mathbb{G}'f:f\in\mathcal{F}\}$ with covariance function given by $\Sigma'$ as defined in (\ref{covariance_function_case_B}) (or in assumption \hyperlink{A1}{A1}) which is a Borel measurable and tight random element of $l^{\infty}(\mathcal{F})$ such that
\[\mathbb{G}_{N}'\rightsquigarrow\mathbb{G}'\quad\text{ in }l^{\infty}(\mathcal{F});\]
\item[(ii) ] the sample paths $f\mapsto\mathbb{G}'f$ are uniformly continuous w.r.t. the semimetric $\rho_{w}(f,g):=[P_{y}(f/w_{\theta}-g/w_{\theta})^{2}]^{1/2}$ with probability $1$.
\end{itemize}
\end{thm}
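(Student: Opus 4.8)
The plan is to follow the same two-stage strategy used in the proof of Theorem \ref{unconditional_convergence}: first I would show that the hypotheses of the present theorem already force the hypotheses of the conditional weak convergence result Theorem \ref{opCWC_case_B_C}, so that \hyperlink{opCWC}{opCWC} (\hyperlink{oasCWC}{oasCWC}) and conclusion (ii) come for free; then I would promote conditional weak convergence to \emph{unconditional} weak convergence by exploiting the measurability that condition \hyperlink{PM}{PM} provides, via Remark \ref{rem_measurable_suprema}.

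For the first stage, note that assumptions \hyperlink{B0}{B0} and \hyperlink{B1}{B1} guarantee that $\theta$ is well defined and that the truncated weight $w_{\theta}(X_1)=\min\{w(X_1),Ew(X_1)/\theta\}$ is a measurable and uniformly bounded function on $\mathcal{X}$. With $w_{\theta}$ measurable and bounded, Remark \ref{rem_ass_M_w} applies: condition \hyperlink{PM}{PM} implies condition \hyperlink{M1_primo}{M1'}, and \hyperlink{F1_stella}{F1$^{*}$} together with \hyperlink{M1_primo}{M1'} implies condition \hyperlink{GC_stella}{GC$^{*}$}. Consequently all four hypotheses of Theorem \ref{opCWC_case_B_C}, namely \hyperlink{B0}{B0}, \hyperlink{B1}{B1}, \hyperlink{F1_stella}{F1$^{*}$} and \hyperlink{GC_stella}{GC$^{*}$}, hold. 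Invoking that theorem yields the Borel measurable and tight zero-mean Gaussian limit $\mathbb{G}'$ with covariance $\Sigma'$ given in (\ref{covariance_function_case_B}), the conditional weak convergence $\sup_{h\in BL_{1}(l^{\infty}(\mathcal{F}))}|E_d h(\mathbb{G}_N')-Eh(\mathbb{G}')|\overset{P*(as*)}{\rightarrow}0$, and the uniform $\rho_w$-continuity of the sample paths with probability $1$. The last assertion is precisely conclusion (ii).

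For the second stage I would invoke Remark \ref{rem_measurable_suprema}. That remark shows that whenever the increments $\sup_{f,g\in\mathcal{G}}|\mathbb{G}_N'f-\mathbb{G}_N'g|$ are measurable for every $\mathcal{G}\subset\mathcal{F}$, the $P_d$-probabilities appearing in (\ref{AEC_general_definition}) and (\ref{FA_general_definition}) are conditional probabilities in the proper sense, so \hyperlink{CAT}{CAT} upgrades to ordinary (unconditional) asymptotic tightness; combined with the unconditional convergence of the finite-dimensional marginals implied by \hyperlink{CWCM}{CWCM}, Theorem 1.5.4 on page 35 in \cite{vdVW} then delivers the unconditional weak convergence $\mathbb{G}_N'\rightsquigarrow\mathbb{G}'$ of conclusion (i). The needed measurability is furnished by \hyperlink{PM}{PM}: the countable, pointwise-dense subset $\mathcal{G}$ from \hyperlink{PM}{PM} lets one replace each supremum over an uncountable subclass of $\mathcal{F}_{\delta}^{w}$ by a supremum over a countable set, and hence renders $\lVert\mathbb{G}_N'\rVert_{\mathcal{F}_{\delta}^{w}}$ and all such increments measurable.

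The argument is almost entirely a transcription of the reasoning behind Theorem \ref{unconditional_convergence}, so the only point requiring genuine care --- and the step I expect to be the main obstacle --- is the measurability reduction of the previous paragraph: one must confirm that the pointwise approximation guaranteed by \hyperlink{PM}{PM} carries over to the weighted increments $f/w_{\theta}-g/w_{\theta}$ and that it makes the suprema entering the definition of \hyperlink{conditional_AEC}{conditional AEC} (see (\ref{def_conditional_AEC})) genuinely measurable in the present $(Y,X)$-setting. Once that is secured, the passage from \hyperlink{oasCWC}{oasCWC} to unconditional weak convergence is immediate.
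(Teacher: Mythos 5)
Your proposal is correct and follows essentially the same route as the paper: it verifies via Remark \ref{rem_ass_M_w} that \hyperlink{B0}{B0}, \hyperlink{B1}{B1}, \hyperlink{F1_stella}{F1$^{*}$} and \hyperlink{PM}{PM} imply \hyperlink{GC_stella}{GC$^{*}$}, reduces to Theorem \ref{opCWC_case_B_C}, and then upgrades conditional to unconditional weak convergence through Remark \ref{rem_measurable_suprema}, with \hyperlink{PM}{PM} supplying the measurability of the relevant suprema. Your version merely spells out in more detail the measurability reduction that the paper compresses into a parenthetical remark.
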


\begin{proof}
Remark \ref{rem_ass_M_w} shows that conditions \hyperlink{F1_stella}{F1$^{*}$} and \hyperlink{PM}{PM} imply assumption \hyperlink{GC_stella}{GC$^{*}$}. The conditions of the present theorem are therefore stronger than those of Theorem \ref{opCWC_case_B_C}, and the conclusions of the present theorem follows therefore from Theorem \ref{opCWC_case_B_C} and Remark \ref{rem_measurable_suprema} (note that assumption \hyperlink{PM}{PM} implies that the suprema in Remark \ref{rem_measurable_suprema} are measurable).
\end{proof}

\begin{cor}[Joint weak convergence]\label{corollary_joint_weak_convergence_C_B}
Under the assumptions of Theorem \ref{unconditional_convergence_case_B_C} it follows that
\[(\mathbb{G}_{N}, \mathbb{G}_{N}')\rightsquigarrow(\mathbb{G}, \mathbb{G}')\text{ in }l^{\infty}(\mathcal{F})\times l^{\infty}(\mathcal{F}),\]
where $\mathbb{G}'$ is defined as in Theorem \ref{unconditional_convergence_case_B_C}, $\mathbb{G}_{N}$ is the classical $\mathcal{F}$-indexed empirical process defined in (\ref{def_classical_empirical_process}), and where $\mathbb{G}$ is a Borel measurable and tight $P_{y}$-Brownian Bridge which is independent from $\mathbb{G}'$.
\end{cor}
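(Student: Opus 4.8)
The plan is to deduce the joint convergence directly from the general joint weak convergence theorem, Theorem \ref{teorema_generale_joint_weak_convergence}, applied with $\mathbb{H}_{N}:=\mathbb{G}_{N}$ and $\mathbb{H}_{N}':=\mathbb{G}_{N}'$. That theorem requires three things: that $\mathbb{G}_{N}$ depends on the sample points only through $\mathbf{Y}_{N}$ and $\mathbf{X}_{N}$; that $\mathbb{G}_{N}\rightsquigarrow\mathbb{G}$ in $l^{\infty}(\mathcal{F})$ for a Borel measurable and tight limit $\mathbb{G}$; and that the \hyperlink{opCWC}{opCWC} version of (\ref{CWC_H}) holds for the \hyperlink{HTEP}{HTEP} sequence $\{\mathbb{G}_{N}'\}_{N=1}^{\infty}$. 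The first requirement is immediate, since the classical empirical process defined in (\ref{def_classical_empirical_process}) is a function of $\mathbf{Y}_{N}$ alone and in particular does not involve the sample inclusion indicators $\mathbf{S}_{N}^{R}$.

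First I would verify that $\mathcal{F}$ is a $P_{y}$-Donsker class, which yields $\mathbb{G}_{N}\rightsquigarrow\mathbb{G}$ with $\mathbb{G}$ the tight, Borel measurable $P_{y}$-Brownian bridge. The argument is the chain of implications among the measurability remarks: condition \hyperlink{PM}{PM} implies conditions \hyperlink{M1_primo}{M1'} and \hyperlink{M2}{M2} (Remark \ref{rem_ass_M_w} and Remark \ref{condition_PM}); condition \hyperlink{F1_stella}{F1$^{*}$} together with \hyperlink{M1_primo}{M1'} implies condition \hyperlink{GC_stella}{GC$^{*}$} (Remark \ref{rem_ass_M_w}); and finally \hyperlink{F1_stella}{F1$^{*}$}, \hyperlink{GC_stella}{GC$^{*}$} and \hyperlink{M2}{M2} together imply the Donsker property (Remark \ref{rem_condizione_F2}). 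Since the limit of a Donsker sequence is by definition a tight, Borel measurable Brownian bridge, the second hypothesis of Theorem \ref{teorema_generale_joint_weak_convergence} is then in place.

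Second, I would note that \hyperlink{opCWC}{opCWC} for $\{\mathbb{G}_{N}'\}_{N=1}^{\infty}$ follows from Theorem \ref{opCWC_case_B_C}: the assumptions \hyperlink{B0}{B0}, \hyperlink{B1}{B1}, \hyperlink{F1_stella}{F1$^{*}$}, \hyperlink{PM}{PM} of the present corollary imply assumption \hyperlink{GC_stella}{GC$^{*}$} (by the same remarks), so the hypotheses of Theorem \ref{opCWC_case_B_C} are met and conclusion (i) of that theorem is precisely the \hyperlink{opCWC}{opCWC} statement required. With all three hypotheses of Theorem \ref{teorema_generale_joint_weak_convergence} verified, that theorem delivers $(\mathbb{G}_{N},\mathbb{G}_{N}')\rightsquigarrow(\mathbb{G},\mathbb{G}')$ in $l^{\infty}(\mathcal{F})\times l^{\infty}(\mathcal{F})$ with $\mathbb{G}$ and $\mathbb{G}'$ independent, which is the assertion of the corollary.

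There is essentially no obstacle here beyond bookkeeping: the proof is the exact analogue of the proof of Corollary \ref{corollary_joint_weak_convergence_C}, with the primed (size-variable) versions of the measurability and entropy conditions in place of the unprimed ones, and with Theorem \ref{opCWC_case_B_C} replacing Theorem \ref{opCWC_case_A}. The only point requiring care is confirming that the chain of remarks genuinely upgrades the stated hypotheses to both the Donsker property of $\mathcal{F}$ and the \hyperlink{GC_stella}{GC$^{*}$} condition needed to invoke Theorem \ref{opCWC_case_B_C}; once that is checked, the independence of the limits is automatic from the conclusion of Theorem \ref{teorema_generale_joint_weak_convergence}.
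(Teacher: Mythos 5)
Your proposal is correct and follows essentially the same route as the paper's own proof: both verify the Donsker property of $\mathcal{F}$ via Remarks \ref{rem_ass_M_w}, \ref{rem_condizione_F2} and \ref{condition_PM}, obtain \hyperlink{opCWC}{opCWC} from Theorem \ref{opCWC_case_B_C}, and then invoke Theorem \ref{teorema_generale_joint_weak_convergence}. Your write-up merely spells out the chain of implications among the measurability remarks in slightly more detail than the paper does.
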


\begin{proof}
In the proof of Theorem \ref{unconditional_convergence_case_B_C} it has already been shown that the assumptions of Theorem \ref{unconditional_convergence_case_B_C} are stronger than those of Theorem \ref{opCWC_case_B_C} which imply \hyperlink{opCWC}{opCWC}. Moreover, from Remark \ref{rem_ass_M_w}, Remark \ref{rem_condizione_F2} and Remark \ref{condition_PM} it follows that $\mathcal{F}$ is a $P_{y}$-Donsker class. The proof of the corollary follows now from an application of Theorem \ref{teorema_generale_joint_weak_convergence}. 
\end{proof}

\section{Extensions for H{\'a}jek empirical processes}\label{Hajek_empirical_process_theory_POISSON}

This section is very similar to Section 4 in \citep{Pasquazzi_2019}. It extends the weak convergence results for \hyperlink{HTEP}{HTEP} sequences to the corresponding H{\'a}jek empirical processes (henceforth \hypertarget{HEP}{HEP}). Given a class $\mathcal{F}$ of functions $f:\mathcal{Y}\mapsto\mathbb{R}$, the \hyperlink{HEP}{HEP} is defined as
\begin{equation}\label{Def_HEP}
\mathbb{G}_{N}''f:=\sqrt{N}\left(\frac{1}{\widehat{N}}\sum_{i=1}^{N}\frac{S_{i,N}}{\pi_{i,N}}f(Y_{i})-\frac{1}{N}\sum_{i=1}^{N}f(Y_{i})\right),\quad f\in\mathcal{F},
\end{equation}
with $\widehat{N}:=\sum_{i=1}^{N}(S_{i,N}/\pi_{i,N})$ the Horvitz-Thompson estimator of the population size $N$. Note that the value taken on by $\mathbb{G}_{N}''f$ is undefined when $\widehat{N}=0$. However, this will not be problem here since the assumptions in the forthcoming theory will always imply that
\begin{equation}\label{condizione_consistenza_N_hat}
P_{d}\left\{\left|\frac{\widehat{N}}{N}-1\right|>\epsilon\right\}\overset{P*(as*)}{\rightarrow} 0\quad\text{ for every }\epsilon>0.
\end{equation}
In fact, this condition allows to consider in place of the \hyperlink{HEP}{HEP} as defined in (\ref{Def_HEP}) the closely related empirical process given by
\begin{equation}\label{HEP_approximation}
\widetilde{\mathbb{G}}_{N}''f:=\frac{1}{\sqrt{N}}\sum_{i=1}^{N}\left(\frac{S_{i,N}}{\pi_{i,N}}-1\right)[f(Y_{i})-\mathbb{P}_{y,N}f],\quad f\in\mathcal{F},
\end{equation}
where $\mathbb{P}_{y,N}:=\sum_{i=1}^{N}\delta_{Y_{i}}/N$ is the empirical measure on $\mathcal{Y}$. In order to see why under condition (\ref{condizione_consistenza_N_hat}) we can consider $\widetilde{\mathbb{G}}_{N}''$ in place of the \hyperlink{HEP}{HEP} it is sufficient to observe that
\begin{equation}\label{HEP_equvalenza_asintotica}
\mathbb{G}_{N}''f-\widetilde{\mathbb{G}}_{N}''f=\left(\frac{N}{\widehat{N}}-1\right) \widetilde{\mathbb{G}}_{N}''f,\quad f\in\mathcal{F},
\end{equation}
and that this together with condition (\ref{condizione_consistenza_N_hat}) implies that any one of the three weak convergence results in $l^{\infty}(\mathcal{F})$ for the sequence $\{\widetilde{\mathbb{G}}_{N}''\}_{N=1}^{\infty}$ carries over immediately to the corresponding sequence of \hyperlink{HEP}{HEP}s, and viceversa.

The following lemma establishes conditional convergence of the marginal distributions for the sequence $\{\widetilde{\mathbb{G}}_{N}''\}_{N=1}^{\infty}$ and hence for the corresponding sequence of \hyperlink{HEP}{HEP}s as well. 

\begin{lem}[\hyperlink{CWCM}{CWCM}]\label{marginal_convergence_HAJEK}
Let $\{\mathbf{S}_{N}^{R}\}_{N=1}^{\infty}$, $\{\mathbf{p}_{N}\}_{N=1}^{\infty}$ and $\mathcal{F}$  be defined as in Lemma \ref{lem_marginal_convergence_Poisson_process_C}, let $\{\mathbf{\underline{\pi}}_{N}\}_{N=1}^{\infty}$ be the sequence of vectors of first order sample inclusion probabilities corresponding to $\{\mathbf{S}_{N}^{R}\}_{N=1}^{\infty}$, and let $\{\widetilde{\mathbb{G}}''\}_{N=1}^{\infty}$ be the sequence of empirical processes defined by (\ref{HEP_approximation}). Assume that conditions
\begin{itemize}
\item[\hypertarget{C1}{\textbf{C1}}) ] $\mathcal{F}$ contains a constant function which is not identically equal to zero, i.e. a function $f:\mathcal{Y}\mapsto\mathbb{R}$ such that $f\equiv C$ $P_{y}$-almost surely for some constant $C\neq 0$;
\item[\hypertarget{C2}{\textbf{C2}}) ] $P_{y}|f|<\infty$ for every $f\in\mathcal{F}$
\end{itemize}
and conditions \hyperlink{A0}{A0}, \hyperlink{A1}{A1} and \hyperlink{A2}{A2} are satisfied. Then the function
\begin{equation}\label{funzione_covarianza_Sigma_primo_primo}
\Sigma''(f,g):=\Sigma'(f,g)-\frac{P_{y}f}{C}\Sigma'(C,g)-\frac{P_{y}g}{C}\Sigma'(f, C)+\frac{P_{y}f P_{y}g}{C^{2}}\Sigma'(C, C),\quad f,g\in\mathcal{F},
\end{equation}
with $\Sigma'$ defined as in assumption \hyperlink{A1}{A1}, is a positive semidefinite covariance function, and for every finite-dimensional $\mathbf{f}\in\mathcal{F}^{r}$ and for every $\mathbf{t}\in\mathbb{R}^{r}$
\begin{equation*}\label{limit_chf_Hajek}
E_{d}\exp(i\mathbf{t}^{\intercal}\widetilde{\mathbb{G}}''_{N}\mathbf{f})\overset{P(as)}{\rightarrow} \exp\left(-\frac{1}{2}\mathbf{t}^{\intercal}\Sigma''(\mathbf{f})\mathbf{t}\right),
\end{equation*}
where $\Sigma''(\mathbf{f})$ is the covariance matrix whose elements are given by $\Sigma''_{(ij)}(\mathbf{f}):=\Sigma''(f_{i},f_{j})$. 
\end{lem}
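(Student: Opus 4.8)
The plan is to reduce the H\'ajek-type process $\widetilde{\mathbb{G}}_N''$ to the Horvitz--Thompson process $\mathbb{G}_N'$, whose conditional marginal convergence has already been established in Lemma \ref{lem_marginal_convergence_Poisson_process_C}, and then to transport that convergence through a linear map whose coefficients converge almost surely. \emph{Step 1 (algebraic reduction).} Let $C\in\mathcal F$ be the nonzero constant function supplied by assumption \hyperlink{C1}{C1}. Splitting the centering term in (\ref{HEP_approximation}) gives $\widetilde{\mathbb{G}}_N''f=\mathbb{G}_N'f-(\mathbb{P}_{y,N}f)\,\tfrac1{\sqrt N}\sum_{i=1}^N(S_{i,N}^R/\pi_{i,N}-1)$, and since $\mathbb{G}_N'C=C\,\tfrac1{\sqrt N}\sum_{i=1}^N(S_{i,N}^R/\pi_{i,N}-1)$, I may rewrite this as $\widetilde{\mathbb{G}}_N''f=\mathbb{G}_N'f-\tfrac{\mathbb{P}_{y,N}f}{C}\,\mathbb{G}_N'C$. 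Hence, for $\mathbf f\in\mathcal F^r$ and $\mathbf t\in\mathbb R^r$, writing $\mathbf f^+:=(f_1,\dots,f_r,C)^\intercal\in\mathcal F^{r+1}$, I obtain $\mathbf t^\intercal\widetilde{\mathbb{G}}_N''\mathbf f=\mathbf s_N^\intercal\mathbb{G}_N'\mathbf f^+$ with the random coefficient vector $\mathbf s_N:=\bigl(t_1,\dots,t_r,-\tfrac1C\sum_{j=1}^r t_j\mathbb{P}_{y,N}f_j\bigr)^\intercal$. The key point is that $\mathbf s_N$ depends on the sample points only through $\mathbf Y_N$, so that under $E_d$ it is a constant.

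\emph{Step 2 (coefficients and covariance).} By assumption \hyperlink{C2}{C2} and the strong law of large numbers, $\mathbb{P}_{y,N}f_j\to P_yf_j$ almost surely, whence $\mathbf s_N\to\mathbf s^*:=\bigl(t_1,\dots,t_r,-\tfrac1C\sum_{j=1}^r t_jP_yf_j\bigr)^\intercal$ almost surely. A direct expansion of $(\mathbf s^*)^\intercal\Sigma'(\mathbf f^+)\mathbf s^*$, using the symmetry of $\Sigma'$ and collecting the terms carrying a factor $\Sigma'(C,\cdot)$, reproduces exactly $\mathbf t^\intercal\Sigma''(\mathbf f)\mathbf t$ with $\Sigma''$ as in (\ref{funzione_covarianza_Sigma_primo_primo}). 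Since $\Sigma'$ is positive semidefinite by Lemma \ref{lem_marginal_convergence_Poisson_process_C}, this same identity gives $\mathbf t^\intercal\Sigma''(\mathbf f)\mathbf t=(\mathbf s^*)^\intercal\Sigma'(\mathbf f^+)\mathbf s^*\ge0$ for every $\mathbf t$, which proves that $\Sigma''$ is a positive semidefinite covariance function.

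\emph{Step 3 (limit passage with a random argument).} Set $\psi_N(\mathbf s):=E_d\exp(i\mathbf s^\intercal\mathbb{G}_N'\mathbf f^+)$ and $\psi^*(\mathbf s):=\exp(-\tfrac12\mathbf s^\intercal\Sigma'(\mathbf f^+)\mathbf s)$, so that the claim reads $\psi_N(\mathbf s_N)\overset{P(as)}{\rightarrow}\psi^*(\mathbf s^*)$. I decompose $\psi_N(\mathbf s_N)-\psi^*(\mathbf s^*)=[\psi_N(\mathbf s_N)-\psi_N(\mathbf s^*)]+[\psi_N(\mathbf s^*)-\psi^*(\mathbf s^*)]$. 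The second bracket tends to zero in the required sense because $\mathbf s^*$ is deterministic and Lemma \ref{lem_marginal_convergence_Poisson_process_C}, applied to the marginal indexed by $\mathbf f^+$, yields $\psi_N(\mathbf s^*)\overset{P(as)}{\rightarrow}\psi^*(\mathbf s^*)$. For the first bracket I use $|e^{ia}-e^{ib}|\le|a-b|$ and the Cauchy--Schwarz inequality to bound $|\psi_N(\mathbf s_N)-\psi_N(\mathbf s^*)|\le\lVert\mathbf s_N-\mathbf s^*\rVert\,\bigl(E_d\lVert\mathbb{G}_N'\mathbf f^+\rVert^2\bigr)^{1/2}$, where $\lVert\mathbf s_N-\mathbf s^*\rVert\to0$ almost surely, so that everything reduces to controlling the conditional second moment.

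\emph{Main obstacle.} The delicate step is to show that $E_d\lVert\mathbb{G}_N'\mathbf f^+\rVert^2=\sum_j\mathrm{Var}_d(\mathbb{G}_N'f_j^+)$ stays bounded (in the $P$, resp.\ $as$, sense); this does not follow formally from characteristic-function convergence alone. I expect to inherit it from the approximation chain used in the proof of Lemma \ref{lem_marginal_convergence_Poisson_process_C}: Result \ref{Hajek_result_1} replaces the $\pi_{i,N}$ by the $p_{i,N}$, while Result \ref{Hajek_result_2} and the $L^2$ statement of Result \ref{Hajek_result_3} transfer the design variance of $\mathbb{G}_N'f=\mathbb{Y}_N^Rf$ to that of $\mathbb{T}_N^Pf$, whose variance is precisely $\Sigma_N'(f,f)\to\Sigma'(f,f)$ under \hyperlink{A1}{A1}. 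Thus $E_d\lVert\mathbb{G}_N'\mathbf f^+\rVert^2\to\sum_j\Sigma'(f_j^+,f_j^+)<\infty$, the first bracket vanishes, and combining the two brackets completes the proof. Equivalently, one may observe that conditional convergence of the marginals of $\mathbb{G}_N'\mathbf f^+$ to a Gaussian law entails conditional tightness, and then invoke the elementary fact that $\int e^{i\mathbf s_N^\intercal v}\,d\mu_N(v)\to\int e^{i(\mathbf s^*)^\intercal v}\,d\mu(v)$ whenever $\mu_N\Rightarrow\mu$ and $\mathbf s_N\to\mathbf s^*$.
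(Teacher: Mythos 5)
Your argument is correct in substance but follows a genuinely different route from the paper's. The paper re-runs the entire H\'ajek approximation chain for the centered statistic: it introduces $\widetilde{\mathbb{T}}_{N}^{P}f:=\sqrt{N}\,T_{N}(f-\mathbb{P}_{y,N}f;\mathbf{S}_{N}^{P})$, expands $\Sigma_{N}''$ as a four-term combination of $\Sigma_{N}'$, verifies the Lindeberg condition for the recentered triangular array, and then passes through Results \ref{Hajek_result_2}, \ref{Hajek_result_3} and \ref{Hajek_result_1} exactly as in the proof of Lemma \ref{lem_marginal_convergence_Poisson_process_C}. You instead treat Lemma \ref{lem_marginal_convergence_Poisson_process_C} as a black box: the identity $\widetilde{\mathbb{G}}_{N}''f=\mathbb{G}_{N}'f-(\mathbb{P}_{y,N}f/C)\,\mathbb{G}_{N}'C$ (valid since $C\in\mathcal{F}$ by \hyperlink{C1}{C1}, so \hyperlink{A1}{A1} and \hyperlink{A2}{A2} apply to $\mathbf{f}^{+}$) reduces everything to evaluating the design characteristic function of $\mathbb{G}_{N}'\mathbf{f}^{+}$ at a $\mathbf{Y}_{N}$-measurable argument $\mathbf{s}_{N}\rightarrow\mathbf{s}^{*}$. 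Your covariance identity and the resulting proof that $\Sigma''$ is positive semidefinite are correct and arguably cleaner than the paper's. The price of the modularity is that the paper never has to move the argument of a characteristic function (the centering is absorbed into the triangular array before any limit is taken), whereas you must justify $\psi_{N}(\mathbf{s}_{N})-\psi_{N}(\mathbf{s}^{*})\rightarrow 0$.

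That last step is where your primary argument has a real gap. The bound $|\psi_{N}(\mathbf{s}_{N})-\psi_{N}(\mathbf{s}^{*})|\leq\lVert\mathbf{s}_{N}-\mathbf{s}^{*}\rVert\,(E_{d}\lVert\mathbb{G}_{N}'\mathbf{f}^{+}\rVert^{2})^{1/2}$ needs the conditional second moment of the rejective-design process, and this does not follow from the approximation chain as you claim: Result \ref{Hajek_result_2} is only a total-variation bound, which does not transfer second moments from $\mathbb{T}_{N}^{P}$ to $\mathbb{T}_{N}^{P_{0}}$ without a uniform-integrability argument, and assumption \hyperlink{A1}{A1} controls only the $R_{N}$-centered quadratic form, not $\frac{1}{N}\sum_{i}\frac{1-p_{i,N}}{p_{i,N}}f(Y_{i})^{2}$ itself --- the raw variance of $\mathbb{Y}_{N}^{P}f$ may be of larger order than $\Sigma_{N}'(f,f)$, which is precisely why H\'ajek projects onto the sample size in the first place. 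So the assertion ``$E_{d}\lVert\mathbb{G}_{N}'\mathbf{f}^{+}\rVert^{2}\rightarrow\sum_{j}\Sigma'(f_{j}^{+},f_{j}^{+})$'' is not established. Your fallback is the correct fix and should be the argument of record: pointwise convergence of $\psi_{N}$ to the continuous Gaussian characteristic function (on a countable dense set of arguments, extended to Lebesgue-almost every argument by Fubini) yields, via the standard truncation inequality, conditional tightness of the $P_{d}$-laws of $\mathbb{G}_{N}'\mathbf{f}^{+}$ outer almost surely (argue along almost surely convergent subsequences for the in-probability version), and then $\int e^{i\mathbf{s}_{N}^{\intercal}v}\,d\mu_{N}(v)\rightarrow\int e^{i(\mathbf{s}^{*})^{\intercal}v}\,d\mu(v)$ follows by splitting the integral over a large compact set and its complement. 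With that substitution the proof is complete.
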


\begin{proof}
The proof is almost the same as the proof of Lemma \ref{lem_marginal_convergence_Poisson_process_C}. Define the sequences of sample inclusion indicators $\{\mathbf{S}_{N}^{P}\}_{N=1}^{\infty}$ and $\{\mathbf{S}_{N}^{P_{0}}\}_{N=1}^{\infty}$ as in the proof of Lemma \ref{lem_marginal_convergence_Poisson_process_C}. Then, define the sequence of stochastic processes $\{\mathbb{\widetilde{T}}_{N}^{P}\}_{N=1}^{\infty}:=\{\mathbb{\widetilde{T}}_{N}^{P}f:f\in\mathcal{F}\}\}_{N=1}^{\infty}$ by 
\begin{equation*}
\begin{split}
\mathbb{\widetilde{T}}_{N}^{P}f&:=\sqrt{N}T_{N}(f-\mathbb{P}_{y,N}f;\mathbf{S}_{N}^{P})\\
&=\frac{1}{\sqrt{N}}\sum_{i=1}^{N}\left(\frac{S_{i,N}^{P}}{p_{i,N}}-1\right)\left[(f(Y_{i})-\mathbb{P}_{y,N}f)-R_{N}(f-\mathbb{P}_{y,N}f)p_{i,N}\right]\\
&=\frac{1}{\sqrt{N}}\sum_{i=1}^{N}\left(\frac{S_{i,N}^{P}}{p_{i,N}}-1\right)(f(Y_{i})-\mathbb{P}_{y,N}f)+\\
&\quad\quad\quad-\frac{R_{N}(f-\mathbb{P}_{y,N}f)}{\sqrt{N}}\sum_{i=1}^{N}(S_{i,N}^{P}-p_{i,N})\\
&:=\mathbb{\widetilde{Y}}_{N}^{P}f-\mathbb{\widetilde{R}}_{N}^{P}f.
\end{split}
\end{equation*}
Note that $E_{d}\mathbb{\widetilde{T}}_{N}^{P}f=0$ for every $f\in\mathcal{F}$, and that 
\begin{equation*}
\begin{split}
\Sigma_{N}''(f,g)&:=E_{d}\mathbb{\widetilde{T}}_{N}^{P}f\mathbb{\widetilde{T}}_{N}^{P}g\\
&=\Sigma_{N}'(f,g)-\Sigma_{N}'(\mathbb{P}_{y,N}f,g)-\Sigma_{N}'(f,\mathbb{P}_{y,N}g)+\Sigma_{N}'(\mathbb{P}_{y,N}f,\mathbb{P}_{y,N}g)
\end{split}
\end{equation*}
where $\Sigma_{N}'(f,g):=E_{d}\mathbb{T}_{N}^{P}f\mathbb{T}_{N}^{P}g$ is defined as in the proof of Lemma \ref{lem_marginal_convergence_Poisson_process_C}. Now, it follows from assumptions \hyperlink{C1}{C1}, \hyperlink{C2}{C2} and \hyperlink{A1}{A1} that
\begin{equation}\label{limit_covariance_function_HAJEK}
\Sigma_{N}''(f,g)\overset{P(as)}{\rightarrow}\Sigma''(f,g),\quad f,g\in\mathcal{F},
\end{equation}
where $\Sigma''(f,g)$ is defined as in (\ref{funzione_covarianza_Sigma_primo_primo}). This implies that $\Sigma'':\mathcal{F}^{2}\mapsto\mathbb{R}$ must be positive semidefinite and proves the first part of the conclusion of the lemma. 

In order to prove the second part of the conclusion, consider for some given $\mathbf{f}\in\mathcal{F}^{r}$ the triangular array of rowwise conditionally independent random vectors
\[Z_{i,N}\mathbf{f}:=\left(\frac{S_{i,N}^{P}}{p_{i,N}}-1\right)\left[(\mathbf{f}(Y_{i})-\mathbb{P}_{y,N}\mathbf{f})-R_{N}(\mathbf{f}-\mathbb{P}_{y,N}\mathbf{f})p_{i,N}\right],\quad \mathbf{f}\in\mathcal{F}^{r},\]
\[i=1,2,\dots, N,\quad N=1,2,\dots,\]
where $\mathbb{P}_{y,N}\mathbf{f}:=(\mathbb{P}_{y,N}f_{1}, \mathbb{P}_{y,N}f_{2}, \dots, \mathbb{P}_{y,N}f_{r})^{\intercal}$.
Observe that the random vector $\mathbb{\widetilde{T}}_{N}^{P}\mathbf{f}:=(\mathbb{\widetilde{T}}_{N}^{P}f_{1}, \mathbb{\widetilde{T}}_{N}^{P}f_{2}, \dots, \mathbb{\widetilde{T}}_{N}^{P}f_{r})^{\intercal}$ can be written as
\[\mathbb{\widetilde{T}}_{N}^{P}\mathbf{f}=\frac{1}{\sqrt{N}}\sum_{i=1}^{N}Z_{i,N}\mathbf{f}.\]
Using the fact that $\Sigma_{N}''(f,g):=E_{d}\mathbb{\widetilde{T}}_{N}^{P}f\mathbb{\widetilde{T}}_{N}^{P}g\overset{P(as)}{\rightarrow}\Sigma''(f,g)$ along with condition \hyperlink{A2}{A2} it is not difficult to show that the Lindeberg condition
\[\frac{1}{N\mathbf{t}^{\intercal}\Sigma_{N}''(\mathbf{f})\mathbf{t}}\sum_{i=1}^{N}E_{d}\left(\mathbf{t}^{\intercal}Z_{i,N}\mathbf{f}\right)^{2}I\left(|\mathbf{t}^{\intercal}Z_{i,N}\mathbf{f}|>\epsilon\sqrt{\mathbf{t}^{\intercal}\Sigma_{N}''(\mathbf{f})\mathbf{t}}\right)\overset{P(as)}{\rightarrow}0,\quad\epsilon>0,\]
must be satisfied whenever $\mathbf{f}\in\mathcal{F}$ and $\mathbf{t}\in\mathbb{R}^{r}$ such that $\mathbf{t}^{\intercal}\Sigma''(\mathbf{f})\mathbf{t}>0$. Therefore it follows that
\[E_{d}\exp\left(i\mathbf{t}^{\intercal}\mathbb{\widetilde{T}}_{N}^{P}\mathbf{f})\right)\overset{P(as)}{\rightarrow} \exp\left(-\frac{1}{2}\mathbf{t}^{\intercal}\Sigma'(\mathbf{f})\mathbf{t}\right).\]

Next, consider the sequence of stochastic processes $\{\mathbb{\widetilde{T}}_{N}^{P_{0}}\}_{N=1}^{\infty}:=\{\{\mathbb{\widetilde{T}}_{N}^{P_{0}}f: f\in\mathcal{F}\}\}_{N=1}^{\infty}$ with $\mathbb{\widetilde{T}}_{N}^{P_{0}}f$ defined in the same way as $\mathbb{\widetilde{T}}_{N}^{P}f$ but with $\mathbf{S}_{N}^{P_{0}}$ in place of $\mathbf{S}_{N}^{P}$. Use assumption \hyperlink{A0}{A0} along with Result \ref{Hajek_result_2} in Section \ref{rejective_sampling_review} to show that
\[\left|E_{d}\exp\left(i\mathbf{t}^{\intercal}\mathbb{\widetilde{T}}_{N}^{P}\mathbf{f})\right)-E_{d}\exp\left(i\mathbf{t}^{\intercal}\mathbb{\widetilde{T}}_{N}^{P_{0}}\mathbf{f})\right)\right|\overset{P(as)}{\rightarrow}0, \quad \mathbf{f}\in\mathcal{F}^{r}, \mathbf{t}\in\mathbb{R}^{r}.\]
Note that this does not require to know the joint distributions of the vectors $\mathbf{S}_{N}^{P_{0}}$ and $\mathbf{S}_{N}^{P}$. 

Third, consider the sequence of stochastic processes $\{\mathbb{\widetilde{Y}}_{N}^{R}\}_{N=1}^{\infty}:=\{\{\mathbb{\widetilde{Y}}_{N}^{R}f: f\in\mathcal{F}\}\}_{N=1}^{\infty}$ with $\mathbb{\widetilde{Y}}_{N}^{R}f$ defined in the same way as $\mathbb{\widetilde{Y}}_{N}^{P}f$ but with $\mathbf{S}_{N}^{R}$ in place of $\mathbf{S}_{N}^{P}$. Use Result \ref{Hajek_result_3} in Section \ref{rejective_sampling_review} to conclude that
\[\left|E_{d}\exp\left(i\mathbf{t}^{\intercal}\mathbb{\widetilde{T}}_{N}^{P_{0}}\mathbf{f})\right)-E_{d}\exp\left(i\mathbf{t}^{\intercal}\mathbb{\widetilde{Y}}_{N}^{R}\mathbf{f})\right)\right|\overset{P(as)}{\rightarrow}0, \quad \mathbf{f}\in\mathcal{F}^{r}, \mathbf{t}\in\mathbb{R}^{r}\]
as well. 

Finally, note that the definition of $\mathbb{\widetilde{Y}}_{N}^{R}$ coincides with the one of $\mathbb{\widetilde{G}}_{N}''$ except for the fact that the former contains the first order sample inclusion probabilities corresponding to $\mathbf{S}_{N}^{P}$ in place of those corresponding to $\mathbf{S}_{N}^{R}$, i.e. $\mathbb{\widetilde{Y}}_{N}^{R}$ contains $p_{i,N}$ in place of $\pi_{i,N}:=E_{d}S_{i,N}^{R}$. However, this problem can be easily fixed by using Result \ref{Hajek_result_1}.
\end{proof}

\begin{rem}\label{rem_equivalenza}
Assumption \hyperlink{A0}{A0} implies that condition (\ref{condizione_consistenza_N_hat}) holds. By (\ref{HEP_equvalenza_asintotica}) it follows therefore that the conditions of Lemma \ref{marginal_convergence_HAJEK} imply also that
\begin{equation*}\label{limit_chf_Hajek_HEP}
E_{d}\exp(i\mathbf{t}^{\intercal}\mathbb{G}''_{N}\mathbf{f})\overset{P(as)}{\rightarrow} \exp\left(-\frac{1}{2}\mathbf{t}^{\intercal}\Sigma''(\mathbf{f})\mathbf{t}\right).
\end{equation*}
\end{rem}

\begin{rem}\label{condizione_C2}
Assumption \hyperlink{C2}{C2} is certainly satisfied if assumption \hyperlink{F1}{F1} holds or if $w_{\theta}:\mathcal{X}\mapsto(0,\infty)$ is measurable and uniformly bounded and assumption \hyperlink{F1_stella}{F1$^{*}$} holds.
\end{rem}

The next two lemmas establish \hyperlink{conditional_AEC}{conditional AEC} of the $\{\widetilde{\mathbb{G}}''\}_{N=1}^{\infty}$ sequence for the case where there is a positive lower bound for the $\pi_{i,N}$'s and for the case where the $\pi_{i,N}$'s are proportional to some size variable which can take on arbitrarily small values, respectively.

\begin{lem}[\hyperlink{conditional_AEC}{conditional AEC}]\label{HEP_AEC_case_A}
Let $\mathcal{F}$ be a class of functions $f:\mathcal{Y}\mapsto\mathbb{R}$ which satisfies assumption \hyperlink{M2}{M2}. Then, under the assumptions of Lemma \ref{AEC_case_C}, it follows that
\[E_{d}\lVert\mathbb{\widetilde{G}}_{N}''\rVert_{\mathcal{F}_{\delta_{N}}}\overset{P*(as*)}{\rightarrow}0\quad\text{ for every }\delta_{N}\downarrow 0.\]
\end{lem}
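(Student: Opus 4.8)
The plan is to reduce the claim to the asymptotic equicontinuity of the \hyperlink{HTEP}{HTEP} already furnished by Lemma \ref{AEC_case_C}, exploiting the fact that $\widetilde{\mathbb{G}}_{N}''$ is nothing but $\mathbb{G}_{N}'$ applied to the empirically centered functions. Write $\mathbf{1}$ for the constant function $y\mapsto 1$ on $\mathcal{Y}$. Since $\mathbb{P}_{y,N}h$ is a scalar depending only on $\mathbf{Y}_{N}$, linearity of the \hyperlink{HTEP}{HTEP} in its argument gives, for every $h\in\mathcal{F}_{\delta_{N}}$,
\[\widetilde{\mathbb{G}}_{N}''h=\mathbb{G}_{N}'h-(\mathbb{P}_{y,N}h)\,\mathbb{G}_{N}'\mathbf{1},\]
and hence
\[\lVert\widetilde{\mathbb{G}}_{N}''\rVert_{\mathcal{F}_{\delta_{N}}}\leq\lVert\mathbb{G}_{N}'\rVert_{\mathcal{F}_{\delta_{N}}}+\Big(\sup_{h\in\mathcal{F}_{\delta_{N}}}|\mathbb{P}_{y,N}h|\Big)\,|\mathbb{G}_{N}'\mathbf{1}|.\]
First I would take the design expectation $E_{d}$ of both sides. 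The first term is handled directly: part (ii) of Lemma \ref{AEC_case_C} gives $E_{d}\lVert\mathbb{G}_{N}'\rVert_{\mathcal{F}_{\delta_{N}}}\overset{P*(as*)}{\rightarrow}0$. In the second term the factor $\sup_{h}|\mathbb{P}_{y,N}h|$ is a function of $\mathbf{Y}_{N}$ alone, hence design-constant and pulled out of $E_{d}$, leaving the product $\big(\sup_{h}|\mathbb{P}_{y,N}h|\big)\,E_{d}|\mathbb{G}_{N}'\mathbf{1}|$.

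Next I would show that this product is a null sequence times a bounded sequence. For the first factor, use $|\mathbb{P}_{y,N}h|\leq\sqrt{\mathbb{P}_{y,N}h^{2}}$ and write $h=f-g$ with $\rho(f,g)<\delta_{N}$, i.e.\ $P_{y}h^{2}<\delta_{N}^{2}$. Since $\{h^{2}:h\in\mathcal{F}_{\delta_{N}}\}\subseteq(\mathcal{F}_{\infty})^{2}$ and $(\mathcal{F}_{\infty})^{2}$ is an outer almost sure $P_{y}$-Glivenko--Cantelli class by assumption \hyperlink{GC}{GC},
\[\sup_{h\in\mathcal{F}_{\delta_{N}}}\mathbb{P}_{y,N}h^{2}\leq\delta_{N}^{2}+\sup_{\psi\in(\mathcal{F}_{\infty})^{2}}|(\mathbb{P}_{y,N}-P_{y})\psi|\overset{as*}{\rightarrow}0,\]
so $\sup_{h}|\mathbb{P}_{y,N}h|\overset{as*}{\rightarrow}0$; here assumption \hyperlink{M2}{M2} (taken with all coefficients equal) guarantees that $\sup_{h\in\mathcal{F}_{\delta_{N}}}|\mathbb{P}_{y,N}h|$ is a measurable function of $\mathbf{Y}_{N}$, so that this is a genuine almost sure statement and the factor may legitimately be pulled out of $E_{d}$. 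For the second factor I would reuse the negative-association-plus-Hoeffding estimate from the proof of Lemma \ref{AEC_case_C}, applied to the bounded (constant) function $\mathbf{1}$: it yields the conditional sub-Gaussian bound $P_{d}\{|\mathbb{G}_{N}'\mathbf{1}|>x\}\leq 2e^{-Cx^{2}}$ with $C=2L^{2}$ holding with probability tending to one (eventually almost surely), whence $E_{d}|\mathbb{G}_{N}'\mathbf{1}|\leq\sqrt{\pi/C}$ stays bounded. Equivalently, one may bound $E_{d}(\mathbb{G}_{N}'\mathbf{1})^{2}\leq N^{-1}\sum_{i=1}^{N}(1-\pi_{i,N})/\pi_{i,N}$ by dropping the nonpositive off-diagonal covariances (negative association) and then controlling $\pi_{i,N}$ from below via assumption \hyperlink{A2_stella}{A2$^{*}$} together with Result \ref{Hajek_result_1}.

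Combining the two factors, the second term converges to zero in the required $P*(as*)$ sense, and adding the first term completes the argument. I expect the main obstacle to be the bookkeeping around the random centering term $\mathbb{P}_{y,N}h$: one must verify that $\sup_{h}|\mathbb{P}_{y,N}h|$ vanishes uniformly over the shrinking class $\mathcal{F}_{\delta_{N}}$ while the design fluctuation $E_{d}|\mathbb{G}_{N}'\mathbf{1}|$ remains under control, and that these two statements, one an unconditional almost sure statement about $\mathbf{Y}_{N}$ and the other a conditional (design) statement, combine correctly within the outer almost sure framework. This is precisely where assumption \hyperlink{M2}{M2} is needed, namely to ensure the relevant empirical suprema are measurable.
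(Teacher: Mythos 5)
Your proof is correct and follows essentially the same route as the paper: the identical decomposition $\widetilde{\mathbb{G}}_{N}''h=\mathbb{G}_{N}'h-(\mathbb{P}_{y,N}h)\,\mathbb{G}_{N}'\mathbf{1}$, Lemma \ref{AEC_case_C}(ii) for the first term, and the negative-association variance bound $E_{d}(\mathbb{G}_{N}'\mathbf{1})^{2}\leq N^{-1}\sum_{i}(1-\pi_{i,N})/\pi_{i,N}$ controlled via \hyperlink{A2_stella}{A2$^{*}$} for the second. The only (harmless) divergence is that you kill $\sup_{h\in\mathcal{F}_{\delta_{N}}}|\mathbb{P}_{y,N}h|$ through Cauchy--Schwarz and the Glivenko--Cantelli property of $(\mathcal{F}_{\infty})^{2}$ from \hyperlink{GC}{GC}, whereas the paper bounds it by $\lVert\mathbb{P}_{y,N}-P_{y}\rVert_{\mathcal{F}}+\delta_{N}$ and uses that $\mathcal{F}$ itself is Glivenko--Cantelli (as a consequence of being Donsker under \hyperlink{F1}{F1}, \hyperlink{GC}{GC}, \hyperlink{M2}{M2}).
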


\begin{proof}
First, note that
\[\mathbb{\widetilde{G}}_{N}''f=\mathbb{G}'_{N}f -\frac{\mathbb{P}_{y,N}f}{\sqrt{N}}\sum_{i=1}^{N}\left(\frac{S_{i,N}^{R}}{\pi_{i,n}}-1\right),\quad f\in\mathcal{F},\]
where $\mathbb{G}'_{N}$ is the \hyperlink{HTEP}{HTEP}. From this it follows that
\[E_{d}\lVert\mathbb{\widetilde{G}}_{N}''\rVert_{\mathcal{F}_{\delta_{N}}}\leq E_{d}\lVert\mathbb{G}_{N}'\rVert_{\mathcal{F}_{\delta_{N}}}+\lVert\mathbb{P}_{y,N}\rVert_{\mathcal{F}_{\delta_{N}}}E_{d}\left|\frac{1}{\sqrt{N}}\sum_{i=1}^{N}\left(\frac{S_{i,N}}{\pi_{i,n}}-1\right)\right|.\]
Since by Theorem 2.8 in \cite{Joag-Dev_1983} the $S_{i,N}^{R}$'s are negatively associated, it follows that
\begin{equation}\label{variance_sample_size_estimator}
E_{d}\left|\frac{1}{\sqrt{N}}\sum_{i=1}^{N}\left(\frac{S_{i,N}}{\pi_{i,n}}-1\right)\right|^{2}\leq \frac{1}{N}\sum_{i=1}^{N}\frac{1-\pi_{i,N}}{\pi_{i,N}},
\end{equation}
and assumption \hyperlink{A2_stella}{A2$^{*}$} implies that the right side in the latter inequality is bounded in probability (eventually almost surely). To complete the proof of the lemma it remains therefore to show that
\[\lVert\mathbb{P}_{y,N}\rVert_{\mathcal{F}_{\delta_{N}}}\overset{as*}{\rightarrow}0\quad\text{ for every }\delta_{N}\downarrow 0.\]
To this aim note that
\[\lVert\mathbb{P}_{y,N}\rVert_{\mathcal{F}_{\delta_{N}}}\leq\lVert\mathbb{P}_{y,N}-P_{y}\rVert_{\mathcal{F}_{\delta_{N}}}+\lVert P_{y}\rVert_{\mathcal{F}_{\delta_{N}}}\leq \lVert\mathbb{P}_{y,N}-P_{y}\rVert_{\mathcal{F}}+\delta_{N},\]
and that $\lVert\mathbb{P}_{y,N}-P_{y}\rVert_{\mathcal{F}}\overset{as*}{\rightarrow}0$ because assumptions \hyperlink{F1}{F1}, \hyperlink{GC}{GC} and \hyperlink{M2}{M2} imply that $\mathcal{F}$ is a $P_{y}$-Donsker class (see Remark \ref{rem_ass_M}) and hence an outer almost sure $P_{y}$-Glivenko-Cantelli class.
\end{proof}

\begin{lem}[\hyperlink{conditional_AEC}{conditional AEC}]\label{HEP_AEC_case_B}
Let $\mathcal{F}$ be a class of functions $f:\mathcal{Y}\mapsto\mathbb{R}$ which satisfies assumptions \hyperlink{C1}{C1} and \hyperlink{M2}{M2}. Then, under the assumptions of Lemma \ref{AEC_case_C_B}, it follows that
\[E_{d}\lVert\mathbb{\widetilde{G}}_{N}''\rVert_{\mathcal{F}_{\delta_{N}}^{w}}\overset{P*(as*)}{\rightarrow}0\quad\text{ for every }\delta_{N}\downarrow 0.\]
\end{lem}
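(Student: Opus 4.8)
The plan is to mimic the proof of Lemma \ref{HEP_AEC_case_A}, reducing the \hyperlink{HEP}{HEP} to the \hyperlink{HTEP}{HTEP} plus a term driven by the fluctuation of the estimated population size, and then to control each piece with the tools available in the proportional-to-size setting. Since $\widetilde{\mathbb{G}}_N''$ is linear in its argument, the identity
\[\widetilde{\mathbb{G}}_N''f=\mathbb{G}_N'f-\mathbb{P}_{y,N}f\cdot\frac{1}{\sqrt N}\sum_{i=1}^N\left(\frac{S_{i,N}^R}{\pi_{i,N}}-1\right),\quad f\in\mathcal{F},\]
yields, for every $\delta_N\downarrow0$,
\[E_d\lVert\widetilde{\mathbb{G}}_N''\rVert_{\mathcal{F}_{\delta_N}^w}\le E_d\lVert\mathbb{G}_N'\rVert_{\mathcal{F}_{\delta_N}^w}+\lVert\mathbb{P}_{y,N}\rVert_{\mathcal{F}_{\delta_N}^w}\,E_d\left|\frac{1}{\sqrt N}\sum_{i=1}^N\left(\frac{S_{i,N}^R}{\pi_{i,N}}-1\right)\right|,\]
the factor $\lVert\mathbb{P}_{y,N}\rVert_{\mathcal{F}_{\delta_N}^w}$ being pulled out of $E_d$ because it depends only on $\mathbf{Y}_N$. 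The first term on the right vanishes by part (ii) of Lemma \ref{AEC_case_C_B}, so the whole task reduces to showing that the product in the second term tends to $0$ in the appropriate mode.

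First I would dispose of the factor $\lVert\mathbb{P}_{y,N}\rVert_{\mathcal{F}_{\delta_N}^w}$. By Remark \ref{rem_condizione_F2}, assumptions \hyperlink{F1_stella}{F1$^*$}, \hyperlink{GC_stella}{GC$^*$} and \hyperlink{M2}{M2} make $\mathcal{F}$ a $P_y$-Donsker, hence $P_y$-Glivenko--Cantelli, class, so that $\lVert\mathbb{P}_{y,N}-P_y\rVert_{\mathcal{F}_\infty}\le 2\lVert\mathbb{P}_{y,N}-P_y\rVert_{\mathcal{F}}\overset{as*}{\to}0$. Because $w_\theta\le Ew(X_1)/\theta$ is bounded, writing $P_y(f-g)=E[(f-g)(Y_1)w_\theta^{-1}(X_1)\cdot w_\theta(X_1)]$ and applying the Cauchy--Schwarz inequality gives $|P_y(f-g)|\le\rho_w(f,g)\sqrt{Ew_\theta^2(X_1)}$, whence $\lVert P_y\rVert_{\mathcal{F}_{\delta_N}^w}\le\delta_N\sqrt{Ew_\theta^2(X_1)}\to0$; combining the two bounds yields $\lVert\mathbb{P}_{y,N}\rVert_{\mathcal{F}_{\delta_N}^w}\overset{as*}{\to}0$.

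The step I expect to be the main obstacle is the remaining factor. As in (\ref{variance_sample_size_estimator}), negative association of the components of $\mathbf{S}_N^R$ (Theorem 2.8 in \cite{Joag-Dev_1983}) bounds its square by $\tfrac1N\sum_{i=1}^N(1-\pi_{i,N})/\pi_{i,N}$. Here, unlike in Lemma \ref{HEP_AEC_case_A}, assumption \hyperlink{A2_stella}{A2$^*$} is unavailable and the $\pi_{i,N}$ may be arbitrarily close to $0$, so the boundedness of this average is exactly the delicate point. This is where \hyperlink{C1}{C1} is used: since $\mathcal{F}$ contains a constant $C\neq0$, every envelope satisfies $F\ge|C|>0$, and therefore \hyperlink{F1_stella}{F1$^*$} forces $E[w_\theta^{-2}(X_1)]\le C^{-2}E[F^*(Y_1)/w_\theta(X_1)]^2<\infty$, and a fortiori $E[w_\theta^{-1}(X_1)]<\infty$. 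Feeding the uniform approximation $\max_i|\pi_{i,N}/(\theta w_\theta(X_i)/Ew(X_1))-1|\overset{P(as)}{\to}0$ from (\ref{uniform_convergence_p_i}) into the law of large numbers for $\tfrac1N\sum w_\theta^{-1}(X_i)$ and $\tfrac1N\sum w(X_i)$ then gives
\[\frac1N\sum_{i=1}^N\frac{1-\pi_{i,N}}{\pi_{i,N}}\overset{P(as)}{\to}\frac{Ew(X_1)}{\theta}\,E[w_\theta^{-1}(X_1)]-1<\infty,\]
so the remaining factor is bounded in probability (eventually almost surely). Multiplying this bounded factor by $\lVert\mathbb{P}_{y,N}\rVert_{\mathcal{F}_{\delta_N}^w}\overset{as*}{\to}0$ finishes the argument; the only care needed is to track the convergence mode (P versus as) inherited from \hyperlink{B0}{B0} and \hyperlink{B1}{B1}.
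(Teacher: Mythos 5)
Your proof is correct and follows essentially the same route as the paper's: the same decomposition of $\widetilde{\mathbb{G}}_N''$ into the HTEP plus the $\mathbb{P}_{y,N}f$-weighted sample-size fluctuation, the same negative-association bound (\ref{variance_sample_size_estimator}), the same use of C1 and F1$^*$ to get $Ew_\theta^{-2}(X_1)<\infty$ together with (\ref{uniform_convergence_p_i}) to control $\tfrac1N\sum(1-\pi_{i,N})/\pi_{i,N}$, and the same Glivenko--Cantelli plus Cauchy--Schwarz argument for $\lVert\mathbb{P}_{y,N}\rVert_{\mathcal{F}_{\delta_N}^w}\to0$. You merely spell out a few steps the paper leaves implicit (e.g.\ why the envelope is bounded below by $|C|$), which is fine.
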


\begin{proof}
Follow the steps in the proof of Lemma \ref{HEP_AEC_case_A} up to inequality (\ref{variance_sample_size_estimator}) (with $\mathcal{F}_{\delta_{N}}^{w}$ in place of $\mathcal{F}_{\delta_{N}}$) and note that the right side of that inequality is bounded in probability (eventually almost surely) because assumptions \hyperlink{B0}{B0} and \hyperlink{B1}{B1} imply (\ref{uniform_convergence_p_i}) (see the proof of Lemma \ref{marginal_convergence_case_C_B}), and assumptions \hyperlink{C1}{C1} and \hyperlink{F1_stella}{F1$^{*}$} imply $Ew_{\theta}^{-2}(X_{1})<\infty$. To complete the proof of the lemma it remains therefore to show that
\[\lVert\mathbb{P}_{y,N}\rVert_{\mathcal{F}_{\delta_{N}}^{w}}\overset{as*}{\rightarrow}0\quad\text{ for every }\delta_{N}\downarrow 0.\]
To this aim note that
\[\lVert\mathbb{P}_{y,N}\rVert_{\mathcal{F}_{\delta_{N}}^{w}}\leq\lVert\mathbb{P}_{y,N}-P_{y}\rVert_{\mathcal{F}_{\delta_{N}}^{w}}+\lVert P_{y}\rVert_{\mathcal{F}_{\delta_{N}}^{w}}\leq \lVert\mathbb{P}_{y,N}-P_{y}\rVert_{\mathcal{F}}+\frac{Ew(X_{1})}{\theta}\delta_{N},\]
and that $\lVert\mathbb{P}_{y,N}-P_{y}\rVert_{\mathcal{F}}\overset{as*}{\rightarrow}0$ because assumptions \hyperlink{F1_stella}{F1$^{*}$}, \hyperlink{GC_stella}{GC$^{*}$} and \hyperlink{M2}{M2} imply that $\mathcal{F}$ is a $P_{y}$-Donsker class (see Remark \ref{rem_condizione_F2}) and hence an outer almost sure $P_{y}$-Glivenko-Cantelli class.
\end{proof}

Having found sufficient conditions for \hyperlink{CWCM}{CWCM} and for \hyperlink{conditional_AEC}{conditional AEC} w.r.t to suitable semimetrics, we are now ready to prove the three desired weak convergence results. Since the sufficient conditions under consideration imply condition (\ref{condizione_consistenza_N_hat}) (see Remark \ref{rem_equivalenza}), the weak convergence results for $\{\widetilde{\mathbb{G}}''\}_{N=1}^{\infty}$ and for the \hyperlink{HEP}{HEP} sequence $\{\mathbb{G}_{N}''\}_{N=1}^{\infty}$ are equivalent. Since only the \hyperlink{HEP}{HEP} sequence is of interest in applications, the weak convergence results will be stated only in terms of the latter.

\begin{thm}[Conditional and unconditional weak convergence]\label{opCWC_HEP}
Let $\mathcal{F}$ be a class of functions $f:\mathcal{Y}\mapsto\mathbb{R}$ which satisfies assumption \hyperlink{C1}{C1} and let $\{\mathbb{G}_{N}''\}_{N=1}^{\infty}$ be the corresponding sequence of \hyperlink{HEP}{HEP}s as defined in (\ref{Def_HEP}). Then, under the assumptions of Theorem \ref{unconditional_convergence} or the assumptions of Theorem \ref {unconditional_convergence_case_B_C} it follows that
\begin{itemize}
\item[(i) ] there exists zero-mean Gaussian process $\{\mathbb{G}''f:f\in\mathcal{F}\}$ with covariance function $\Sigma'':\mathcal{F}^{2}\mapsto\mathbb{R}$ defined as in (\ref{funzione_covarianza_Sigma_primo_primo}) which is a Borel measurable and tight random element of $l^{\infty}(\mathcal{F})$;
\item[(ii) ] (conditional weak convergence)
\[\sup_{h\in BL_{1}(l^{\infty}(\mathcal{F}))}\left|E_{d}h(\mathbb{G}_{N}'')-Eh(\mathbb{G}'')\right|\overset{P*(as*)}{\rightarrow}0;\]
\item[(iii) ] (unconditional weak convergence)
\[\mathbb{G}_{N}''\rightsquigarrow\mathbb{G}''\quad\text{ in }l^{\infty}(\mathcal{F}).\]
\end{itemize}
Moreover,
\begin{itemize}
\item[(iv) ] under the assumptions of Theorem \ref{unconditional_convergence} it follows that the sample paths $f\mapsto\mathbb{G}''f$ are uniformly $\rho$-continuous with probability $1$;
\item[(v) ] under the assumptions of Theorem \ref{unconditional_convergence_case_B_C} it follows that the sample paths $f\mapsto\mathbb{G}''f$ are uniformly $\rho_{w}$-continuous with probability $1$.
\end{itemize}
\end{thm}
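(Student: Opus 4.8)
The plan is to establish all the claimed properties first for the auxiliary sequence $\{\widetilde{\mathbb{G}}_{N}''\}_{N=1}^{\infty}$ defined in (\ref{HEP_approximation}), and only at the end to transfer them to the \hyperlink{HEP}{HEP} sequence $\{\mathbb{G}_{N}''\}_{N=1}^{\infty}$ by means of the identity (\ref{HEP_equvalenza_asintotica}). Since assumption \hyperlink{A0}{A0} is in force under either set of hypotheses (directly in Theorem \ref{unconditional_convergence}, and via Lemma \ref{marginal_convergence_case_C_B} under the hypotheses of Theorem \ref{unconditional_convergence_case_B_C}), condition (\ref{condizione_consistenza_N_hat}) holds by Remark \ref{rem_equivalenza}, so the scalar factor $N/\widehat{N}-1$ in (\ref{HEP_equvalenza_asintotica}) tends to zero in the appropriate sense. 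Combined with the asymptotic tightness of $\{\widetilde{\mathbb{G}}_{N}''\}_{N=1}^{\infty}$, this renders the two sequences asymptotically equivalent, so that each of the three modes of weak convergence for one sequence passes to the other, as explained in the discussion following (\ref{HEP_approximation}).

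The core of the argument is therefore to verify that $\{\widetilde{\mathbb{G}}_{N}''\}_{N=1}^{\infty}$ satisfies the hypotheses of Corollary \ref{cor_uniformly_continuous_limit_process}, namely \hyperlink{CWCM}{CWCM} together with total boundedness and \hyperlink{conditional_AEC}{conditional AEC} with respect to a suitable semimetric. Under the hypotheses of Theorem \ref{unconditional_convergence} I would check \hyperlink{CWCM}{CWCM} through Lemma \ref{marginal_convergence_HAJEK}: the constant-function requirement \hyperlink{C1}{C1} is assumed, \hyperlink{C2}{C2} follows from \hyperlink{F1}{F1} by Remark \ref{condizione_C2}, \hyperlink{A0}{A0} and \hyperlink{A1}{A1} are assumed, and \hyperlink{A2}{A2} follows from \hyperlink{A2_stella}{A2$^{*}$} together with the square-integrability supplied by \hyperlink{F1}{F1} and the measurability of the design (Remark \ref{remark_condizione_A2_stella}). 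Total boundedness w.r.t. $\rho$ and \hyperlink{conditional_AEC}{conditional AEC} then follow from Lemma \ref{HEP_AEC_case_A}, whose hypotheses are those of Lemma \ref{AEC_case_C} (valid because \hyperlink{PM}{PM} yields \hyperlink{M1}{M1} and hence, with \hyperlink{F1}{F1}, also \hyperlink{GC}{GC}, by Remark \ref{rem_ass_M} and Remark \ref{condition_PM}) plus \hyperlink{M2}{M2}, again from \hyperlink{PM}{PM}. Under the hypotheses of Theorem \ref{unconditional_convergence_case_B_C} the same two lemmas apply with $\rho$ replaced by $\rho_{w}$: here \hyperlink{A0}{A0}, \hyperlink{A1}{A1}, \hyperlink{A2}{A2} are supplied by Lemma \ref{marginal_convergence_case_C_B} (its condition \hyperlink{B2}{B2} being implied by \hyperlink{F1_stella}{F1$^{*}$}, Remark \ref{rem_condizione_F2}), \hyperlink{C2}{C2} again follows from \hyperlink{F1_stella}{F1$^{*}$}, and the hypotheses of Lemma \ref{HEP_AEC_case_B} are met because \hyperlink{PM}{PM} delivers \hyperlink{M1_primo}{M1'} and \hyperlink{M2}{M2}, hence \hyperlink{GC_stella}{GC$^{*}$} via Remark \ref{rem_ass_M_w}.

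With these two inputs in hand, Corollary \ref{cor_uniformly_continuous_limit_process} immediately yields a Borel measurable and tight Gaussian limit $\mathbb{G}''$ whose covariance is the function $\Sigma''$ of (\ref{funzione_covarianza_Sigma_primo_primo}), which proves (i), together with the conditional weak convergence of $\{\widetilde{\mathbb{G}}_{N}''\}_{N=1}^{\infty}$ and the uniform $\rho$-continuity (respectively $\rho_{w}$-continuity) of the sample paths, which gives (iv) and (v). To upgrade conditional to unconditional weak convergence I would invoke Remark \ref{rem_measurable_suprema}: assumption \hyperlink{PM}{PM} forces the relevant suprema over $\mathcal{F}$ to coincide with suprema over a fixed countable subset, hence to be measurable, so \hyperlink{CAT}{CAT} becomes ordinary asymptotic tightness and conditional weak convergence implies $\widetilde{\mathbb{G}}_{N}''\rightsquigarrow\mathbb{G}''$ in $l^{\infty}(\mathcal{F})$. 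Transferring both statements to the \hyperlink{HEP}{HEP} sequence via (\ref{HEP_equvalenza_asintotica}) then gives (ii) and (iii).

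The main obstacle is not any single deep estimate, since all the heavy lifting has already been carried out in the marginal-convergence and equicontinuity lemmas, but rather the careful bookkeeping of which hypotheses propagate through the web of remarks; in particular one must check at each stage that \hyperlink{PM}{PM} (rather than the weaker Glivenko--Cantelli or measurability hypotheses) is what simultaneously secures \hyperlink{conditional_AEC}{conditional AEC}, the Donsker property invoked inside the equicontinuity lemmas, and the measurability of the suprema needed to pass from conditional to unconditional convergence. The one genuinely analytic point requiring care is the asymptotic-equivalence step: one must confirm that multiplying the tight sequence $\widetilde{\mathbb{G}}_{N}''$ by the scalar $N/\widehat{N}-1\to 0$ preserves all three modes of convergence, which is precisely what condition (\ref{condizione_consistenza_N_hat}) together with the identity (\ref{HEP_equvalenza_asintotica}) is designed to guarantee.
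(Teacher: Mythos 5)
Your proposal is correct and follows essentially the same route as the paper's own proof: verify the hypotheses of Lemma \ref{marginal_convergence_HAJEK} and of Lemmas \ref{HEP_AEC_case_A}/\ref{HEP_AEC_case_B} for the auxiliary sequence $\{\widetilde{\mathbb{G}}_{N}''\}_{N=1}^{\infty}$ via the same chain of remarks, apply Corollary \ref{cor_uniformly_continuous_limit_process}, upgrade to unconditional convergence through Remark \ref{rem_measurable_suprema} using \hyperlink{PM}{PM}, and transfer everything to the \hyperlink{HEP}{HEP} sequence via (\ref{HEP_equvalenza_asintotica}) and (\ref{condizione_consistenza_N_hat}). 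The bookkeeping of which assumption supplies which hypothesis matches the paper's argument in all essentials.
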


\begin{proof}
Consider first the assumptions of Theorem \ref{unconditional_convergence}. Remark \ref{remark_condizione_A2_stella} and Remark \ref{condizione_C2} show that the assumptions of Theorem \ref{unconditional_convergence} together with assumption \hyperlink{C1}{C1} imply the assumptions of Lemma \ref{marginal_convergence_HAJEK}, and the conclusion of that lemma says that the sequence of auxiliary processes $\{\widetilde{\mathbb{G}}''\}_{N=1}^{\infty}$ satisfies \hyperlink{CWCM}{CWCM} for some zero-mean Gaussian limit process with covariance function given by $\Sigma''$ as defined in (\ref{funzione_covarianza_Sigma_primo_primo}). Next, in the proof of Theorem \ref{unconditional_convergence} it has already been shown that the assumptions of that theorem are stronger than those of Lemma \ref{AEC_case_C}. Hence, the assumptions of Theorem \ref{unconditional_convergence} along with assumption \hyperlink{C1}{C1} imply the assumptions of Lemma \ref{HEP_AEC_case_A} (use the fact that assumption \hyperlink{PM}{PM} is stronger that assumption \hyperlink{M2}{M2}; see Remark \ref{condition_PM}) whose conclusion implies that $\{\widetilde{\mathbb{G}}''\}_{N=1}^{\infty}$ is \hyperlink{conditional_AEC}{conditionally AEC} w.r.t. $\rho$. Since the first part of the conclusion of Lemma \ref{AEC_case_C} says that $\mathcal{F}$ is totally bounded w.r.t. $\rho$, it follows by 
Corollary \ref{cor_uniformly_continuous_limit_process} that $\{\widetilde{\mathbb{G}}''\}_{N=1}^{\infty}$ (and hence also the corresponding sequence of \hyperlink{HEP}{HEP}s) satisfies part (ii) of the conclusion of the present theorem for some $\mathbb{H}'$ which satisfies the conditions given in parts (i) and (iv). Part (iii) of the conclusion of the theorem follows now from Remark \ref{rem_measurable_suprema} (recall that condition \hyperlink{PM}{PM} implies that the suprema in Remark \ref{rem_measurable_suprema} are measurable).

Now, consider the assumptions of Theorem \ref{unconditional_convergence_case_B_C}. In the proof of Theorem \ref{unconditional_convergence_case_B_C} it has already been shown that its assumptions are stronger than those of Theorem \ref{opCWC_case_B_C}, and in the proof of the latter theorem it has been shown that its assumptions imply the conditions of Lemma \ref{marginal_convergence_case_C_B} whose conclusion says that conditions \hyperlink{A0}{A0}, \hyperlink{A1}{A1} and \hyperlink{A2}{A2} are satisfied. Use Remark \ref{condizione_C2} to conclude that the assumptions of Theorem \ref{unconditional_convergence_case_B_C} along with assumption \hyperlink{C1}{C1} imply the assumptions of Lemma \ref{marginal_convergence_HAJEK} whose conclusion says that the sequence of auxiliary processes $\{\widetilde{\mathbb{G}}''\}_{N=1}^{\infty}$ satisfies \hyperlink{CWCM}{CWCM} for some zero-mean Gaussian limit process with covariance function given by $\Sigma''$ as defined in (\ref{funzione_covarianza_Sigma_primo_primo}). Next, recall that in the proof of Theorem \ref{opCWC_case_B_C} it has been shown that its assumptions imply those of Lemma \ref{AEC_case_C_B}, and conclude that the assumptions of Theorem \ref{unconditional_convergence_case_B_C} along with condition \hyperlink{C1}{C1} must therefore imply the assumptions of Lemma \ref{HEP_AEC_case_B} (use the fact that assumption \hyperlink{PM}{PM} is stronger that assumption \hyperlink{M2}{M2}; see Remark \ref{condition_PM}) whose conclusion implies that $\{\widetilde{\mathbb{G}}''\}_{N=1}^{\infty}$ is \hyperlink{conditional_AEC}{conditionally AEC} w.r.t. $\rho_{w}$. Since the first part of the conclusion of Lemma \ref{AEC_case_C_B} says that $\mathcal{F}$ is totally bounded w.r.t. $\rho_{w}$, it follows by Corollary \ref{cor_uniformly_continuous_limit_process} that the sequence of auxiliary processes $\{\widetilde{\mathbb{G}}''\}_{N=1}^{\infty}$ (and hence also the corresponding sequence of \hyperlink{HEP}{HEP}s) satisfies part (ii) of the conclusion of the present theorem for some $\mathbb{H}'$ which satisfies the conditions given in parts (i) and (v). Again, part (iii) of the conclusion of the theorem follows now from Remark \ref{rem_measurable_suprema} (recall that condition \hyperlink{PM}{PM} implies that the suprema in Remark \ref{rem_measurable_suprema} are measurable).
\end{proof}

\begin{cor}[Joint weak convergence]\label{corollary_joint_weak_convergence_HEP}
Under the assumptions of Theorem \ref{opCWC_HEP} it follows that
\begin{itemize}
\item[(i) ]
\[(\mathbb{G}_{N}, \mathbb{G}_{N}'')\rightsquigarrow(\mathbb{G}, \mathbb{G}'')\text{ in }l^{\infty}(\mathcal{F})\times l^{\infty}(\mathcal{F}),\]
where $\mathbb{G}''$ is defined as in the conclusion of Theorem \ref{opCWC_HEP}, $\mathbb{G}_{N}$ is the classical $\mathcal{F}$-indexed empirical process defined in (\ref{def_classical_empirical_process}), and where $\mathbb{G}$ is a Borel measurable and tight $P_{y}$-Brownian Bridge which is independent from $\mathbb{G}''$.
\end{itemize}
\end{cor}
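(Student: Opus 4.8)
The plan is to read off the conclusion from Theorem~\ref{teorema_generale_joint_weak_convergence} by making the identifications $\mathbb{H}_N:=\mathbb{G}_N$ and $\mathbb{H}_N':=\mathbb{G}_N''$, so that the limit mappings $\mathbb{H}$ and $\mathbb{H}'$ of that theorem become $\mathbb{G}$ and $\mathbb{G}''$. The task thus reduces to checking the three hypotheses of Theorem~\ref{teorema_generale_joint_weak_convergence}: that $\mathbb{G}_N$ depends on $\omega$ only through $\mathbf{Y}_N$ and $\mathbf{X}_N$; that $\mathbb{G}_N\rightsquigarrow\mathbb{G}$ in $l^\infty(\mathcal{F})$ with $\mathbb{G}$ a Borel measurable and tight mapping; and that $\mathbb{G}_N''$ satisfies \hyperlink{opCWC}{opCWC} as in (\ref{CWC_H}).

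The first requirement is immediate from the definition (\ref{def_classical_empirical_process}): the classical empirical process $\mathbb{G}_N f=\tfrac1{\sqrt N}\sum_{i=1}^N(f(Y_i)-P_yf)$ is a function of $\mathbf{Y}_N$ alone, and in particular of $(\mathbf{Y}_N,\mathbf{X}_N)$ only, as required. For the second requirement I would simply invoke the $P_y$-Donsker property of $\mathcal{F}$, which is exactly the assertion that $\mathbb{G}_N\rightsquigarrow\mathbb{G}$ for a tight, Borel measurable $P_y$-Brownian bridge $\mathbb{G}$. This Donsker property has in fact already been established in the proofs of Corollary~\ref{corollary_joint_weak_convergence_C} and Corollary~\ref{corollary_joint_weak_convergence_C_B}: under the assumptions of Theorem~\ref{unconditional_convergence} it follows from Remark~\ref{rem_ass_M} together with Remark~\ref{condition_PM}, while under the assumptions of Theorem~\ref{unconditional_convergence_case_B_C} it follows from Remark~\ref{rem_ass_M_w}, Remark~\ref{rem_condizione_F2} and Remark~\ref{condition_PM}. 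The third requirement is nothing but part~(ii) of Theorem~\ref{opCWC_HEP}, whose hypotheses coincide with those of the present corollary, and which supplies the (even outer almost sure) \hyperlink{opCWC}{opCWC} of $\mathbb{G}_N''$ toward the Gaussian process $\mathbb{G}''$ with covariance function $\Sigma''$.

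With the three hypotheses verified, Theorem~\ref{teorema_generale_joint_weak_convergence} yields $(\mathbb{G}_N,\mathbb{G}_N'')\rightsquigarrow(\mathbb{G},\mathbb{G}'')$ in $l^\infty(\mathcal{F})\times l^\infty(\mathcal{F})$ together with the independence of $\mathbb{G}$ and $\mathbb{G}''$, which is precisely the claim. I do not expect any genuine obstacle in this argument, since it is essentially a verbatim repetition of the proofs of Corollary~\ref{corollary_joint_weak_convergence_C} and Corollary~\ref{corollary_joint_weak_convergence_C_B} with $\mathbb{G}_N''$ in place of $\mathbb{G}_N'$. The only point that deserves attention is that the Donsker conclusion for $\mathcal{F}$ must be read off correctly under each of the two alternative sets of assumptions allowed in Theorem~\ref{opCWC_HEP}, but both chains of implications among the cited remarks are already recorded in the paper.
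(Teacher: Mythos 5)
Your proposal is correct and follows essentially the same route as the paper: both arguments deduce the $P_{y}$-Donsker property of $\mathcal{F}$ from the chains of remarks already recorded in the proofs of Corollary \ref{corollary_joint_weak_convergence_C} and Corollary \ref{corollary_joint_weak_convergence_C_B}, take the \hyperlink{opCWC}{opCWC} of $\mathbb{G}_{N}''$ from part (ii) of Theorem \ref{opCWC_HEP}, and conclude by applying Theorem \ref{teorema_generale_joint_weak_convergence}. Your verification of the remaining hypothesis (that $\mathbb{G}_{N}$ depends on $\omega$ only through $\mathbf{Y}_{N}$ and $\mathbf{X}_{N}$) is left implicit in the paper but is a correct and harmless addition.
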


\begin{proof}
As already shown in the proof of Corollary \ref{corollary_joint_weak_convergence_C} (Corollary \ref{corollary_joint_weak_convergence_C_B}), the assumptions of Theorem \ref{unconditional_convergence} (Theorem \ref{unconditional_convergence_case_B_C}) imply that $\mathcal{F}$ is a $P_{y}$-Donsker class. The conclusion of the present corollary follows therefore from Theorem \ref{opCWC_HEP} and Theorem \ref{teorema_generale_joint_weak_convergence}.
\end{proof}

\section{Simulation results}\label{Simulation_results}

This section about simulation results is analogous to Section 5 in \citep{Pasquazzi_2019} (see also Appendix S4 in \citep{Bertail_2017}). The numerical results given in this section have been obtain by using the R Statistical Software \cite{citazioneR} in order to repeat $B=1000$ times the following steps:
\begin{itemize}
\item[1) ] Generate a population of $N$ independent observations $(Y_{i},X_{i})$ from the linear model $Y_{i}=X_{i}+U_{i}$, where the $X_{i}$'s are i.i.d. lognormal with $E(\ln X_{i})=0$ and $Var(\ln X_{i})=1$, and where the $U_{i}$'s are independent zero mean Gaussian random variables with $Var(U_{i})=X_{i}^{2}$, $i=1,2,\dots, N$.
\item[2) ] Select a sample $\mathbf{s}_{N}:=(s_{1,N}, s_{2,N}, \dots, s_{N,N})$ according to the CPS design with sample size $n$ (specified below) and with first order sample inclusion probabilities $\pi_{i,N}$ proportional to the $X_{i}$ values (this step was performed by using the function "UPmaxentropy" from the R package "sampling" \citep{citazione_sampling_package}).
\item[3) ] Compute the Horvitz-Thompson and the H{\'a}jek estimator for the population cdf $F_{Y,N}(t):=\sum_{i=1}^{N}I(Y_{i}\leq t)/N$, $t\in\mathbb{R}$, and compute the uniform distance between each of those estimators and $F_{Y,N}$, i.e. compute $\lVert \mathbb{G}'_{N}\rVert_{\mathcal{F}}$ and $\lVert \mathbb{G}''_{N}\rVert_{\mathcal{F}}$ for the case where $\mathcal{F}:=\{I(y\leq t):t\in\mathbb{R}\}$.
\item[4) ] Estimate the $\gamma$-quantiles $q_{\gamma}'$ and $q_{\gamma}''$ of the limiting distributions of $\lVert \mathbb{G}'_{N}\rVert_{\mathcal{F}}$ and $\lVert \mathbb{G}''_{N}\rVert_{\mathcal{F}}$, i.e. the $\gamma$-quantiles of the distributions of $\lVert \mathbb{G}'\rVert_{\mathcal{F}}$ and $\lVert \mathbb{G}''\rVert_{\mathcal{F}}$. This was done by using Algorithm 5.1 in \citep{Kroese} which was also used in the simulation study in \cite{Bertail_2017}. The details for the implementation of this algorithm are described below.
\item[5) ] Compute the asymptotic uniform $\gamma$-confidence bands for the population cdf $F_{Y,N}$ based on the Horvitz-Thompson and the H{\'a}jek estimators and verify whether $F_{Y,N}$ lies within these confidence bands, i.e. verify whether $\lVert \mathbb{G}'_{N}\rVert_{\mathcal{F}}\leq \widehat{q}_{\gamma}'$ and whether $\lVert \mathbb{G}''_{N}\rVert_{\mathcal{F}}\leq \widehat{q}_{\gamma}''$, where $\widehat{q}_{\gamma}'$ and $\widehat{q}_{\gamma}''$ are the estimates of $q_{\gamma}'$ and $q_{\gamma}''$ obtained from step 4. Note that the widths of the two asymptotic uniform $2\gamma$-confidence bands for $F_{Y,N}$ are given by $2\widehat{q}_{\gamma}'N^{-1/2}$ and $2\widehat{q}_{\gamma}''N^{-1/2}$, respectively.  
\end{itemize}

The $\gamma$-quantiles of the distributions of $\lVert \mathbb{G}'\rVert_{\mathcal{F}}$ and $\lVert \mathbb{G}''\rVert_{\mathcal{F}}$ were estimated according to the following procedure (see Algorithm 5.1 in \citep{Kroese}):
\begin{itemize}
\item[i) ] Estimate the covariance matrices $\Sigma'(\mathbf{f})$ and $\Sigma''(\mathbf{f})$ for $\mathbf{f}:=(I(y\leq Y_{i_{1}}), I(y\leq Y_{i_{2}}), \dots, I(y\leq Y_{i_{n}}))^{\intercal}$ where $(i_{1}, i_{2}, \dots, i_{n})$ correspond to the sampled population units, i.e. $(i_{1}, i_{2}, \dots, i_{n})$ are the values of the subscript $i$ for which $s_{i,N}=1$, $i=1,2,\dots, N$. The components $\Sigma'_{r,c}(\mathbf{f})$ and $\Sigma''_{r,c}(\mathbf{f})$, $r,c=1,2,\dots, n$, of the two covariance matrices were estimated as follows (cf. Lemma \ref{marginal_convergence_case_C_B} and Lemma \ref{marginal_convergence_HAJEK}):
%\[\widehat{\Sigma}'_{i,j}(\mathbf{f}):=\frac{1}{N}\sum_{k=1}^{N}s_{k,N}\frac{1-\pi_{k,N}}{\pi_{k,N}^{2}}I(Y_{k}\leq Y_{i})I(Y_{k}\leq Y_{j})\]
\[\widehat{\Sigma}'_{r,c}(\mathbf{f}):=\frac{1}{N}\sum_{k=1}^{N}s_{k,N}\frac{1-\pi_{k,N}}{\pi_{k,N}^{2}}Z_{k,i_{r}}Z_{k,i_{c}}\]
with
\[Z_{k,i_{r}}:=I(Y_{k}\leq Y_{i_{r}})-\pi_{k,N}\frac{\sum_{k=1}^{N}\frac{s_{k,N}}{\pi_{k,N}}I(Y_{k}\leq Y_{i_{r}})(1-\pi_{k,N})}{\sum_{k=1}^{N}\pi_{k,N}(1-\pi_{k,N})},\quad k=1,2,\dots, N;\]
and
%\[\widehat{\Sigma}''_{i,j}(\mathbf{f}):=\frac{1}{\sum_{k=1}^{N}\frac{s_{k,N}}{\pi_{k,N}}}\sum_{k=1}^{N}s_{k,N}\frac{1-\pi_{k,N}}{\pi_{k,N}^{2}}[I(Y_{k}\leq Y_{i})-\overline{I}_{i}][I(Y_{k}\leq Y_{j})-\overline{I}_{j}]\]
\[\widehat{\Sigma}''_{r,c}(\mathbf{f}):=\frac{1}{\sum_{k=1}^{N}\frac{s_{k,N}}{\pi_{k,N}}}\sum_{k=1}^{N}s_{k,N}\frac{1-\pi_{k,N}}{\pi_{k,N}^{2}}[Z_{k,i_{r}}-\overline{Z}_{i_{r}}][Z_{k,i_{c}}-\overline{Z}_{i_{c}}]\]
with
\[\overline{Z}_{i_{r}}:=\frac{1}{\sum_{k=1}^{N}\frac{s_{k,N}}{\pi_{k,N}}}\sum_{k=1}^{N}\frac{s_{k,N}}{\pi_{k,N}}Z_{k,i_{r}}.\]
\item[ii) ] Compute the Cholesky decompositions of the estimated covariance matrices, i.e. compute two lower triangular matrices $L$ and $H$ such that $\widehat{\Sigma}'(\mathbf{f})=LL^{\intercal}$ and $\widehat{\Sigma}''_{i,j}(\mathbf{f})=HH^{\intercal}$.
\item[iii) ] Generate independently $1000$ random vectors $\mathbf{Z}_{b}:=(Z_{1,b}, Z_{2,b}, \dots, Z_{n,b})^{\intercal}$, $b=1,2,\dots, 1000$, whose components $Z_{k,b}$ are i.i.d. standard normal random variables and compute the vectors $\mathbf{G}_{b}':=L\mathbf{Z}_{b}$ and $\mathbf{G}_{b}'':=H\mathbf{Z}_{b}$ which can be considered as realizations of the limit processes $\mathbb{G}'$ and $\mathbb{G}''$, respectively.
\item[iv) ] for each $b=1,2,\dots, 1000$ compute the maximum norms $\lVert \mathbf{G}_{b}'\rVert_{\infty}$ and $\lVert \mathbf{G}_{b}''\rVert_{\infty}$ (i.e. the two maxima of the absolute values of the components of $\mathbf{G}_{b}'$ and $\mathbf{G}_{b}''$), put the two vectors $(\lVert \mathbf{G}_{1}'\rVert_{\infty}, \lVert \mathbf{G}_{2}'\rVert_{\infty}, \dots, \lVert \mathbf{G}_{1000}'\rVert_{\infty})$ and $(\lVert \mathbf{G}_{1}''\rVert_{\infty}, \lVert \mathbf{G}_{2}''\rVert_{\infty}, \dots, \lVert \mathbf{G}_{1000}''\rVert_{\infty})$ in ascending order and put $\widehat{q}_{\gamma}'$ equal to the $\gamma$-quantile of the first vector, and put $\widehat{q}_{\gamma}''$ equal to the $\gamma$-quantile of the second vector.
\end{itemize}

\begin{table}[h]\label{tabella_simulazioni}
\caption{Simulation results for the Horvitz-Thompson empirical process.}
\label{tabella_simulazioni_HTEP}
\begin{tabular}{rccc}
\hline
% &  \multicolumn{3}{c}{Estimated coverage probabilities}\\
 & $\gamma=0.90$ & $\gamma=0.95$ & $\gamma=0.99$\\
\hline
$\mathbf{N=500}$ \\
\multirow{ 2}{*}{$\alpha=0.05$} & 0.841 &0.916 &0.983\\
&(0.4233; 0.4541)&(0.4755; 0.5121)&(0.5779; 0.6481)\\
\multirow{ 2}{*}{$\alpha=0.10$} & 0.866 &0.913 &0.990\\
&(0.3022; 0.3185)&(0.3378; 0.3610)&(0.4079; 0.4662)\\
$\mathbf{N=1000}$ \\
\multirow{ 2}{*}{$\alpha=0.05$} & 0.877 &0.937 &0.991\\
&(0.3102; 0.3264)&(0.3470; 0.3676)&(0.4198; 0.4656)\\
\multirow{ 2}{*}{$\alpha=0.10$} & 0.875& 0.928& 0.981\\
&(0.2190; 0.2310)&(0.2444; 0.2602)&(0.2942; 0.3296)\\
$\mathbf{N=2000}$ \\
\multirow{ 2}{*}{$\alpha=0.05$} & 0.873& 0.938& 0.993\\
&(0.2247; 0.2362)&(0.2509; 0.2669)&(0.3019; 0.3374)\\
\multirow{ 2}{*}{$\alpha=0.10$} & 0.885& 0.949& 0.991\\
&(0.1574; 0.1666)&(0.1755; 0.1903)&(0.2110; 0.2384)\\
\hline
\end{tabular}
\end{table}

\begin{table}[h]\label{tabella_simulazioni}
\caption{Simulation results for the H{\'a}jek empirical process.}
\label{tabella_simulazioni_HEP}
\begin{tabular}{rccc}
\hline
% &  \multicolumn{3}{c}{Estimated coverage probabilities}\\
 & $\gamma=0.90$ & $\gamma=0.95$ & $\gamma=0.99$\\
\hline
$\mathbf{N=500}$ \\
\multirow{ 2}{*}{$\alpha=0.05$} & 0.860& 0.928& 0.991\\
&(0.4319; 0.4562)&(0.4838; 0.5100)&(0.5864; 0.6480)\\
\multirow{ 2}{*}{$\alpha=0.10$} & 0.875& 0.925& 0.989\\
&(0.3062; 0.3203)&(0.3418; 0.3652)&(0.4127; 0.4523)\\
$\mathbf{N=1000}$ \\
\multirow{ 2}{*}{$\alpha=0.05$} & 0.887& 0.940& 0.993\\
&(0.3145; 0.3344)&(0.3514; 0.3785)&(0.4243; 0.4737)\\
\multirow{ 2}{*}{$\alpha=0.10$} & 0.880& 0.935& 0.982\\
&(0.2212; 0.2332)&(0.2465; 0.2623)&(0.2962; 0.3313)\\
$\mathbf{N=2000}$ \\
\multirow{ 2}{*}{$\alpha=0.05$} & 0.878& 0.944& 0.994\\
&(0.2269; 0.2416)&(0.2529; 0.2686)&(0.3049; 0.3387)\\
\multirow{ 2}{*}{$\alpha=0.10$} & 0.886& 0.948& 0.994\\
&(0.1585; 0.1684)&(0.1765; 0.1899)&(0.2116; 0.2370)\\
\hline
\end{tabular}
\end{table}

Table \ref{tabella_simulazioni_HTEP} (for the \hyperlink{HTEP}{HTEP}) and Table \ref{tabella_simulazioni_HEP} (for the \hyperlink{HEP}{HEP}) summarize the simulation results. For each considered population size $N=500, 1000, 2000$, for each considered sampling fraction $\alpha:=n/N=0.05, 0.10$ and for each considered confidence level $\gamma=0.90, 0.95, 0.99$ the two tables report the estimate of the coverage probability of the corresponding confidence band for $F_{Y,N}$ as well as the average width (the first figure within each bracket) and the maximum width (the second figure within each bracket) of the $B=1000$ simulated confidence bands. The simulation results suggest that the confidence bands based on the \hyperlink{HTEP}{HTEP} and on the \hyperlink{HEP}{HEP} are very similar. Their coverage accuracy is quite precise for the populations of size $N\geq 1000$ and seems not to depend on the sampling fraction $\alpha$. As expected, the with of the confidence bands is roughly proportional to $n^{-1/2}$ and it appears to be quite stable from sample to sample (the differences between the maximum widths and the average widths are rather small). However, for many applications the widths of the confidence bands might be too large. This problem can be probably overcome through alternative estimators which use the information provided by the auxiliary variable $X$ more efficiently (see e.g. \citep{Rueda_2010} or \citep{Pasquazzi_DeCapitani_2016} and references therein).

% BibTeX users please use one of
%\bibliographystyle{spbasic}      % basic style, author-year citations
%\bibliographystyle{spmpsci}      % mathematics and physical sciences
%\bibliographystyle{spphys}       % APS-like style for physics
%\bibliographystyle{imsart-nameyear}
%\bibliographystyle{unsrtnat}
%\bibliographystyle{plainnat}
%\bibliographystyle{apa}
\bibliographystyle{abbrvnat}

\bibliography{arXiv_FCLTs_for_CPS_designs}   % name your BibTeX data base

\end{document}